\newtheorem{theorem}{Theorem}[section] 
\newtheorem{lemma}[theorem]{Lemma}     
\newtheorem{corolary}[theorem]{Corollary}
\newtheorem{proposition}[theorem]{Proposition}
\newcommand{\R}{{\mathbb R}}
\newcommand{\N}{{\mathbb N}}
\newcommand{\C}{{\mathbb C}}
\newcommand{\K}{{\mathbb K}}
\def\codim{\mathop{\mbox{\normalfont codim}}\nolimits}
\def\Int{\mathop{\mbox{\normalfont Int}}\nolimits}
\title{Global continuation of monotone wavefronts} 
\author{Adrian Gomez and Sergei Trofimchuk}
\begin{document}
\maketitle

\begin{abstract}
\noindent In this paper, we answer the question about
the criteria of existence of monotone travelling fronts  $u = \phi(\nu \cdot
x+ct), \ \phi(-\infty) =0, \phi(+\infty) = \kappa,$ for the
monostable (and, in general, non-quasi-monotone) delayed
reaction-diffusion equations $u_t(t,x) - \Delta u(t,x)  =
f(u(t,x), u(t-h,x)).$ $C^{1,\gamma}$-smooth $f$ is supposed to satisfy $f(0,0) =
f(\kappa,\kappa) =0$ together with other monostability
restrictions.  Our theory covers the two most important cases:
Mackey-Glass type  diffusive equations and  KPP-Fisher  type equations.
The proofs are  based on a variant of Hale-Lin
functional-analytic approach to the heteroclinic solutions  where
Lyapunov-Schmidt reduction  is realized in a `mobile' weighted
space of $C^2$-smooth functions.  This  method requires a detailed
analysis of  a family of associated linear differential Fredholm
operators: at this stage, the discrete Lyapunov
functionals by Mallet-Paret and Sell are used  in an essential way. \end{abstract}


\section{Introduction and main result}
\label{intro}

The aim of this paper is  to obtain efficient
criteria of existence of monotone travelling waves  $u =
\phi(\nu \cdot x+ct), \ \phi(-\infty) =0,$ $ \phi(+\infty) = \kappa
>0,$ for the non-quasi-monotone  functional  reaction-diffusion
equations
\begin{equation}\label{pe}
u_t(t,x) = \Delta u(t,x)  + f(u(t,x), u(t-h,x)), \ u \geq 0,\ x
\in \R^m,
\end{equation}
 in that case when the function $g(x):= f(x,x)$ is of
non-degenerate monostable type:  $g(0) = g(\kappa) =0, \ g'(0)>0,
g'(\kappa) <0$,  and $g(x) >0$ for $x \in (0, \kappa)$.  Here $\nu
\in \R^m$ is a fixed unit vector, $c>0$ is the propagation
speed and $h\geq 0$ is the delay. Henceforth we will assume that
$f$ is $C^{1,\gamma}$-smooth  function, $\gamma \in(0,1]$.

There is a long list of studies that consider the wavefront
existence  for equation (\ref{pe})  either with or without delays,
let us mention here only several of them:
\cite{ZAMP,fhw,GK,GT,HT,KO,LZii,ma1,TPT,TTT,wz,z}.
The problem  is quite well understood   when  $h=0$. In
particular, there exists $c^{\frak N}_*>0$ (called the minimal
speed of propagation) such that, for every $c \geq c^{\frak N}_*$,
equation (\ref{pe})  has exactly one wavefront  $u = \phi(\nu
\cdot x+ct)$, see \cite[Theorems 8.3(ii) and 8.7]{GK} or
\cite{LZii,TPT}. In addition, (\ref{pe}) does not have any  front
propagating at the velocity $c < c^{\frak N}_*$. There are several
variational principles describing  $c^{\frak N}_*$
\cite{BD,GK}. If $g(x) \leq g'(0)x,$ $ x \geq 0,$ then
$c^{\frak N}_*= 2\sqrt{g'(0)}$. In general, however, simple
analytical formulas for   $c^{\frak N}_*$  are not available.
 The
profile $\phi$  is necessarily strictly increasing  \cite[Theorem
2.39]{GK} and the following asymptotic formulae are valid
\cite{TPT} for $c > c^{\frak N}_*$ and appropriate $s_j =s_j(c,
\phi),\ \sigma >0:$
\begin{eqnarray}\label {afe}
(\phi, \phi')(t+s_0,c)&=& e^{\lambda(c) t}(1, \lambda(c)) + O(e^{(\lambda(c)+ \sigma) t}), \ t \to -\infty, \nonumber \\
(\phi, \phi')(t+s_1,c)&=& (\kappa,0) - e^{\lambda_2(c) t}(1,
\lambda_2(c)) + O(e^{(\lambda_2(c)- \sigma) t}), \ t \to +\infty.
\end{eqnarray}
Here $\lambda(c)$ [respectively,  $\lambda_2(c)$] is the closest
to $0$  positive [respectively, negative] zero of the
characteristic polynomial $z^2-cz +g'(0)$ [respectively, $z^2-cz
+g'(\kappa)$].

However,  when $h>0$, there are numerous gaps in our knowledge
about the wavefronts of equation (\ref{pe}). As for now, neither
of  the questions concerning the existence, uniqueness, geometric
shape of fronts has been completely answered even for such quite
studied models as the Nicholson's blowflies diffusive equation
\cite{AGT,ma1,ML,TTT,z} and the KPP-Fisher delayed equation
\cite{ZAMP,BNPR,CMP,FZ,fhw,GT,KO}. An additional
complication appearing in the delayed case is the
possible non-monotonicity of wavefronts
\cite{ZAMP,BNPR,TTT}. But even  the existence of {\it  monotone}
fronts is usually  proved only under the quasi-monotonicity
assumption on $f(u,v)$.  In particular,  it is an open problem whether
the minimal  speed of  propagation $c^{\frak N}_*>0$ for (\ref{pe})  can be
well defined in the situation when  $f(u,v)$ is not quasi-monotone  and is not
dominated by its linear part at $(0,0)$ (cf.  \cite{TPT} and Lemma \ref{tdo} below).  In fact,
even in the case of  quasi-monotone nonlinearities,  $c^{\frak N}_*>0$  was defined in full generality
only very recently, in the fundamental contribution \cite{LZii} by X. Liang  and X.-Q. Zhao.
Another example:  due to the relatively
`bad' monotonicity properties of $f(u,v) = u(1-v)$, an efficient criterion of
existence of monotone wavefronts to the delayed KPP-Fisher
equation was obtained just   a few years ago  \cite{FZ,GT,KO} (in Section \ref{apa},
we present a significant  extension of this  result).
For the Nicholson's blowflies equation where  $f(u,v) = -u +pve^{-v}, p >e,$ the
similar question was not yet answered: in Section \ref{apa}, we present a
complete solution to the existence problem when $p \in (e,e^2]$ and we describe partially
this solution when $p > e^2$.

Now, there are very few approaches which can be used to address
the wavefront existence  for equation (\ref{pe}).  It should be noted that 
the profile
$\phi$ of   travelling front $u(t,x) = \phi(\nu \cdot
x+ct),$ $ \phi(-\infty) =0,$ $ \phi(+\infty) = \kappa >0,$ defines
a heteroclinic solution of the delay differential equation
\begin{equation}\label{e1}
\phi''(t)-c\phi'(t) + f(\phi(t),\phi(t-ch))=0, \ t \in \R.
\end{equation} 
Therefore the 
phase plane analysis, which is usually invoked in the non-delayed case, does not work when $h>0$ because of the infinite
dimension of phase spaces associated  to 
equation (\ref{e1}).  As a consequence, several alternative ideas were proposed, see e.g. \cite{BNPR,fhw,KO,wz}. Between them,  the upper-lower solution method \cite{CMP,GT,ma1,wz}  and a perturbation approach based on the Lyapunov-Schmidt procedure \cite{fhw,FTnon,HL} are  the most used by the researchers.
The latter method relies  essentially on the fact that  delay
differential equation (\ref{e1}) simplifies in the limit cases $c
= +\infty$ and $h=0$. For instance,  the limit form (as $c \to
+\infty$) of (\ref{e1}) is $\phi'(t) = f(\phi(t),\phi(t-h))$.
Assume that this equation  linearized along  its heteroclinic
solution $\psi$ defines a surjective Fredholm operator  in an
appropriate Banach space. In consequence, the Lyapunov-Schmidt
reduction can be used to prove the existence of a smooth family of
{fast} (i.e. $c>c_*$ for some large $c_*$) wave solutions in some
neighborhood of $\psi$. We remark that the value of $c_*>0$  is
at least very difficult to compute or estimate.
Therefore, the existence results obtained by this technique so far
have local nature (e.g., the existence is proved for velocities in
some neighborhood of $c=+\infty$).  This constitutes a serious
drawback for the applications  because of the special importance
that the minimal fronts have for the description
of propagation phenomena.   Nevertheless, as we show in this
paper, the described approach still can be extended  to prove the
existence of the global families of wavefronts for several important classes
of equations.  The key property of wavefronts which is needed
for the mentioned  extension  is their monotonicity.  It seems that our
methodology does not apply to  non-monotone travelling
fronts.

Before stating the main theorem of this work,  we  need to discuss several
properties of  the spectra of   the following linearizations of
(\ref{e1}) along the equilibria $0, \kappa:$
\begin{equation}\label{ve1}
v''(t)-cv'(t) + \alpha_j v(t) + \beta_j v(t-ch)=0, \quad j \in
\{0, \kappa\}.
\end{equation}
Here $ \alpha_0:= f_1(0,0), \ \beta_0:= f_2(0,0), \
\alpha_\kappa: = f_1(\kappa,\kappa), \  \beta_\kappa:=
f_2(\kappa,\kappa)$ and $f_j(x_1,x_2):= f_{x_j}(x_1,x_2)$. Recall
that  the monostable function $g(x) :=f(x,x)$ satisfies
\[
g'(0) = f_1(0,0)+ f_2(0,0) = \alpha_0+\beta_0 >0, \ g'(\kappa) =
f_1(\kappa,\kappa)+ f_2(\kappa,\kappa)= \alpha_\kappa+\beta_\kappa
< 0.
\]
Additionally, in view of applications in population dynamics (see
Section \ref{apa}), we will assume that $ \beta_0 = f_2(0,0) \geq
0. $
\begin{lemma} \label{lc2} Given $\alpha_\kappa+ \beta_\kappa <0, \ \beta_\kappa < 0,$ there exists
$c^{\frak L} _\kappa=c_\kappa^{\frak L} (h) \in (0, +\infty]$ such
that the characteristic equation
\begin{equation}\label{char1a}
\chi_\kappa(z):= z^2 - cz + \alpha_\kappa + \beta_\kappa
e^{-chz}=0,  \ c >0,
\end{equation}
has  three real roots $\lambda_1\leq \lambda_2 <  0 < \lambda_3$
if and only if $c \leq c^{\frak L}_\kappa$.  If $c^{\frak L}
_\kappa$ is finite and $c=c^{\frak L} _\kappa$,  then equation (\ref{char1a}) has a double root $\lambda_1= \lambda_2<0$,
while for $c > c^{\frak L} _\kappa$ there does not exist any negative
root to (\ref{char1a}).  Moreover,  if $\lambda_j  \in \C$ is a
complex root of (\ref{char1a}) for $c \in (0, c^{\frak
L}_\kappa]$  then $\Re \lambda_j < \lambda_2$.

Furthermore,   $c^{\frak L}_\kappa(0)= +\infty$ and $c^{\frak L}
_\kappa(h)$ is strictly decreasing in its domain. In
fact, \[c^{\frak L}_\kappa(h)=
\frac{\theta(\alpha_\kappa,\beta_\kappa) +o(1)}{h}, \quad h \to
+\infty, \quad \mbox{ where} \
\theta(\alpha_\kappa,\beta_\kappa):=
\sqrt{\frac{2\omega_\kappa}{\beta_\kappa}}e^{\omega_\kappa/2},
\]
and $\omega_\kappa$ is the unique negative root of
\begin{equation}\label{roe}
-2\alpha_\kappa = \beta_\kappa e^{-\omega_\kappa}(2+\omega_\kappa).
\end{equation}
\end{lemma}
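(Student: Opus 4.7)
The plan is to treat $\chi_\kappa(\cdot;c,h)$ as a real-analytic one-parameter family (with $c$ the parameter and $h,\alpha_\kappa,\beta_\kappa$ fixed), track its real roots as $c$ varies, and extract both the asymptotics and the complex-root bound by separate arguments.

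\textbf{Shape and critical speed.} Because $\beta_\kappa<0$, the second derivative $\chi_\kappa''(z)=2+c^2h^2\beta_\kappa e^{-chz}$ is strictly increasing from $-\infty$ to $2$ and has a unique zero. Consequently $\chi_\kappa'$ has at most one local minimum, and since $\chi_\kappa'(\pm\infty)=+\infty$ it has at most two real zeros, so $\chi_\kappa$ has at most three real roots. From $\chi_\kappa(-\infty)=-\infty$, $\chi_\kappa(0)=\alpha_\kappa+\beta_\kappa<0$ and $\chi_\kappa(+\infty)=+\infty$, a positive real root $\lambda_3$ always exists; any negative real roots come in pairs (counted with multiplicity). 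Define
\[c^{\mathfrak L}_\kappa(h):=\sup\{c>0:\chi_\kappa(\cdot;c,h)\text{ has a negative real root}\}.\]
The implicit function theorem applied at $c=0$, starting from the simple negative root $-\sqrt{-g'(\kappa)}$ of $z^2+g'(\kappa)$, yields $c^{\mathfrak L}_\kappa(h)>0$. If $c^{\mathfrak L}_\kappa<\infty$, a compactness argument along a sequence $c_n\uparrow c^{\mathfrak L}_\kappa$ with simple negative roots $\lambda_2(c_n)$ bounded on $(-\infty,0)$ produces a limit $\lambda_*$ which is a root of $\chi_\kappa(\cdot;c^{\mathfrak L}_\kappa,h)$; IFT forbids $\lambda_*$ from being simple (else it persists past $c^{\mathfrak L}_\kappa$, contradicting the supremum), so $\lambda_*$ is a double root and $\lambda_1=\lambda_2=\lambda_*$. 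For $c\le c^{\mathfrak L}_\kappa$, the at-most-three count together with $\lambda_3>0$ produces the claimed configuration $\lambda_1\le\lambda_2<0<\lambda_3$; for $c>c^{\mathfrak L}_\kappa$ no negative real root exists by definition of the supremum. Monotonicity $dc^{\mathfrak L}_\kappa/dh<0$ comes from implicit differentiation of the tangency system $\chi_\kappa=\chi_\kappa'=0$: using $\chi_\kappa'(z_*)=0$ to rewrite $1+h\beta_\kappa e^{-chz_*}=2z_*/c<0$ gives $\partial_c\chi_\kappa(z_*)=-2z_*^2/c<0$ and $\partial_h\chi_\kappa(z_*)<0$, whence $dc^{\mathfrak L}_\kappa/dh=-\partial_h\chi_\kappa/\partial_c\chi_\kappa<0$. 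Finally, $c^{\mathfrak L}_\kappa(h)\to+\infty$ as $h\to 0^+$ follows from uniform convergence of $\chi_\kappa(\cdot;c,h)$ on compact sets to $z^2-cz+g'(\kappa)$, whose negative root persists for all small $h$.

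\textbf{Asymptotic as $h\to+\infty$.} With the ansatz $c=\theta/h$ and change of variable $\omega=\theta z$ (so $chz=\omega$), passage to the limit $h\to\infty$ reduces the tangency system to
\[
\omega^2+\alpha_\kappa\theta^2+\theta^2\beta_\kappa e^{-\omega}=0,\qquad 2\omega=\theta^2\beta_\kappa e^{-\omega}.
\]
Substituting the second into the first yields $\theta^2=-\omega(\omega+2)/\alpha_\kappa$; the second alone yields $\theta^2=2\omega e^{\omega}/\beta_\kappa$. Equating and dividing by $\omega\ne 0$ reproduces exactly (\ref{roe}), and the unique negative solution $\omega_\kappa$ follows from strict monotonicity of $(2+\omega)e^{-\omega}$ on $(-\infty,-1)$ together with the sign restrictions $\alpha_\kappa+\beta_\kappa<0$ and $\beta_\kappa<0$ (which imply $-2\alpha_\kappa/\beta_\kappa<2<e$, so the unique solution lies on the increasing branch). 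The formula $\theta(\alpha_\kappa,\beta_\kappa)=\sqrt{2\omega_\kappa/\beta_\kappa}\,e^{\omega_\kappa/2}$ is then immediate from $\theta^2=2\omega_\kappa e^{\omega_\kappa}/\beta_\kappa$.

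\textbf{Complex roots (the hardest step).} For $c\in(0,c^{\mathfrak L}_\kappa]$ and a complex zero $\mu=a+ib$ with $b\ne 0$, separating $\chi_\kappa(\mu)=0$ into real and imaginary parts and subtracting the real-axis value $\chi_\kappa(a)$ yields the identity $\chi_\kappa(a)=b^2+\beta_\kappa e^{-cha}(1-\cos(chb))$ together with $(c-2a)b=|\beta_\kappa|e^{-cha}\sin(chb)$. Combining these with the one-sided inequality $\chi_\kappa'(\lambda_2)\le 0$, i.e.\ $c-2\lambda_2\ge ch|\beta_\kappa|e^{-ch\lambda_2}$, and exploiting the strict convexity/concavity structure of $\chi_\kappa'$ from the first paragraph, I expect to force $a<\lambda_2$ by an argument in the spirit of the Pontryagin theory for quasi-polynomials. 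A more structural and arguably cleaner alternative---the route flagged in the paper's abstract---is to invoke the Mallet-Paret–Sell discrete Lyapunov functional, which orders the spectrum of the delay linearization by its oscillation count and thereby automatically yields $\Re\mu<\lambda_2$ for every non-real root.
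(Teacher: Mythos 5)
Your first two paragraphs (root counting, definition of $c^{\mathfrak L}_\kappa$ via supremum, compactness at the endpoint, monotonicity via implicit differentiation, formal reduction to the limit system as $h\to+\infty$) are in the right spirit, and the monotonicity computation through the tangency system is a correct and genuine alternative to the paper's approach, which works with the rescaled variable $\lambda=cz$ and the parameter $\epsilon=c^{-2}$ so that $\partial F/\partial\epsilon=\lambda^2\geq 0$. That rescaling is worth adopting because it delivers for free the one property you never actually establish: that $\{c>0:\chi_\kappa(\cdot;c,h)\text{ has a negative real root}\}$ is an interval of the form $(0,c^{\mathfrak L}_\kappa]$, not merely a set whose supremum is $c^{\mathfrak L}_\kappa$. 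Your argument ``for $c\le c^{\mathfrak L}_\kappa$, the at-most-three count together with $\lambda_3>0$ produces the claimed configuration'' silently assumes the interval property. Working directly with $c$ the partial $\partial_c\chi_\kappa=-z(1+h\beta_\kappa e^{-chz})$ does not have a definite sign for $z<0$, so you cannot run the obvious monotone continuation; in the $(\lambda,\epsilon)$ variables $F$ is monotone in $\epsilon$ on all of $\mathbb{R}$, and the interval structure is immediate. Patch this before relying on the supremum definition.

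The genuine gap is the claim $\Re\lambda_j<\lambda_2$ for complex roots, which your proposal openly leaves unfinished: ``I expect to force $a<\lambda_2$ by an argument in the spirit of the Pontryagin theory'' is a hope, not a proof, and the suggested appeal to the Mallet-Paret--Sell discrete Lyapunov functionals is a misreading of the paper. That machinery is invoked in Section 3 for the Fredholm-operator analysis along the wavefront, not for the root location in this lemma, and there is no off-the-shelf statement there that orders complex zeros of a quadratic-plus-delay quasi-polynomial to the left of $\lambda_2$. The paper's actual argument is elementary but essential to the result: after passing to $(\lambda,\epsilon)$ it (i) shows roots with $\Re\lambda\geq x_2$ lie in a fixed rectangle $[x_2,x_1]\times[-y(h),y(h)]$; (ii) uses Rouch\'e and the fact that at $h=0$ only $\lambda_2<0<\lambda_1$ are present to continue in $h$ until, at a putative first bad $h_0$, some complex root would need $\Re\lambda_j(h_0)=\lambda_2(h_0)$, $\Im\lambda_j(h_0)\neq 0$; and (iii) reaches a contradiction by comparing the sign of $F'(\lambda_2)$, which is $\leq 0$ by the critical-point count when $c\leq c^{\mathfrak L}_\kappa$, against the imaginary part of $F(\lambda_2+i\theta)=0$, which equals $\theta\big(2\epsilon\lambda_2-1+h|\beta_\kappa|e^{-h\lambda_2}\tfrac{\sin(h\theta)}{h\theta}\big)$ and is forced to be strictly positive because $\sin(h\theta)/(h\theta)<1$. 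Your identities from separating real and imaginary parts are a reasonable starting point, but without the continuity-in-$h$ argument pinning the real part exactly at $\lambda_2$ you cannot close the loop; I would reproduce the paper's Rouch\'e continuation rather than hope for a Pontryagin-style one-shot inequality.
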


\begin{lemma} \label{lc1} Given $\alpha_0+ \beta_0 >0, \ \beta_0 \geq 0,$ there exists $c^{\frak L}_0=c^{\frak L}_0(h)>0$ such that the characteristic equation
\begin{equation}\label{char1}
\chi_0(z):= z^2 - cz +\alpha_0 +\beta_0 e^{-chz}=0,  \ c >0,
\end{equation}
has exactly two simple real roots $\lambda = \lambda(c), \mu=
\mu(c)$ if and only if $c > c^{\frak L}_0$. These roots are
positive so that  we can suppose that $0 < \lambda < \mu$.  Next,
if $c > c^{\frak L}_0$ and $\beta_0  >0$, then  all complex roots
$\{\lambda_j\}_{j \geq 1}$ of (\ref{char1}) are simple and can be
ordered in such a way that
\begin{equation}\label{por1}
\dots \leq\Re \lambda_3(c) \leq \Re \lambda_4(c)\leq  \Re
\lambda_2(c) = \Re \lambda_1(c) < \lambda < \mu.
\end{equation}
If $c=c^{\frak L}_0$, then the above equation has a double
positive root $\lambda(c^{\frak L}_0)= \mu(c^{\frak L}_0)$, while
for $c < c^{\frak L}_0$ there does not exist any real root to
(\ref{char1}).  Furthermore,  each complex root $z_0 = x_0 + i
y_0$ with $\Re z_0 = x_0 \leq \lambda(c)$ must have its imaginary
part $|\Im z_0| > \pi/ch$.  Finally, $c^{\frak L}_0=c^{\frak
L}_0(h)>0$ is a decreasing function, with $c^{\frak
L}_0(+\infty)=0$ if $\alpha_0 \leq 0$ and $c^{\frak
L}_0(+\infty)=2 \sqrt{\alpha_0}$ if $\alpha_0 > 0$.  In fact, for
$\alpha_0 \leq 0$, we have \[c^{\frak L}_0(h)=
\frac{\theta_1(\alpha_0, \beta_0) +o(1)}{h}, \quad h \to +\infty,
\ \mbox{ where} \quad \theta_1(\alpha_0, \beta_0):=
\sqrt{\frac{2\omega_0}{\beta_0}}e^{\omega_0/2},
\]
and $\omega_0$ is the unique positive root of
$
-2\alpha_0=\beta_0e^{-\omega_0}(2+\omega_0).
$
\end{lemma}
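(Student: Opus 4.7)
The plan is to analyze $\chi_0$ first on the real line to locate $\lambda,\mu$ and $c^{\frak L}_0$, then control its non-real zeros via a real/imaginary decomposition, and finally extract the large-$h$ asymptotics by a rescaling at the double root. I treat the case $\beta_0 > 0$; the residual case $\beta_0 = 0$ reduces to the quadratic $z^2 - cz + \alpha_0$ and gives $c^{\frak L}_0 = 2\sqrt{\alpha_0}$ directly.

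First, $\chi_0''(z) = 2 + c^2 h^2 \beta_0 e^{-chz} > 0$ on $\R$, so $\chi_0$ is strictly convex; combined with $\chi_0(0) = \alpha_0 + \beta_0 > 0$ and $\chi_0(z) \to +\infty$ at $\pm\infty$, this gives a unique minimizer $z^* = z^*(c)$ satisfying $2z^* = c(1 + h\beta_0 e^{-chz^*}) > c$, i.e.\ $z^* > c/2 > 0$. Real roots exist iff $\chi_0(z^*) \leq 0$, with equality producing a double root at $z^*$. The envelope identity $\frac{d}{dc}\chi_0(z^*(c),c) = -z^*(1 + h\beta_0 e^{-chz^*}) < 0$, together with $\chi_0(z^*) \to \alpha_0 + \beta_0 > 0$ as $c \to 0^+$ and $\chi_0(z^*) \to -\infty$ as $c \to +\infty$, produces a unique $c^{\frak L}_0 > 0$. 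For $c > c^{\frak L}_0$, strict convexity yields exactly two simple real roots $0 < \lambda < z^* < \mu$, both positive because $\chi_0(0) > 0$.

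Next I would control the non-real roots by writing $z = x + iy$ and equating real and imaginary parts of $\chi_0(z) = 0$:
\[
\chi_0(x) = y^2 + \beta_0 e^{-chx}(1 - \cos(chy)), \qquad (2x - c)y = \beta_0 e^{-chx}\sin(chy).
\]
The first identity forces $\chi_0(x) \geq 0$ and, by non-negativity of both summands, also forces $y = 0$ when $\chi_0(x) = 0$; hence non-real roots lie in $\{\Re z < \lambda\}\cup\{\Re z > \mu\}$. Exclusion of $\{\Re z > \mu\}$ follows by Rouch\'e, comparing $\chi_0$ with $z^2 - cz + \alpha_0$ on large contours there and exploiting the uniform decay $|\beta_0 e^{-chz}| \leq \beta_0 e^{-ch\mu}$. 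For a non-real root with $\Re z_0 = x_0 \leq \lambda$, the bound $|\Im z_0| > \pi/(ch)$ comes from the imaginary-part identity together with $\sin(chy_0)/y_0 \leq ch$ and strict negativity of $\chi_0'(x_0)$ on $(-\infty,z^*)$; the easy subcase $x_0 < c/2$ is immediate, while the residual subcase $x_0 \in [c/2,\lambda]$ (arising only for $c$ near $c^{\frak L}_0$) needs an additional inequality derived from the real part together with $z^* > \lambda$. Simplicity of the complex roots and the identification $\Re\lambda_1 = \Re\lambda_2 < \lambda$ for the pair nearest the real axis would then follow by complex-conjugate pairing, standard discreteness of zeros of exponential polynomials in any right half-plane, and a monotonicity analysis of $|\chi_0(x+iy)|^2$ along vertical lines.

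Strict monotonicity of $c^{\frak L}_0(h)$ follows by implicit differentiation of the double-root system $\chi_0(z^*) = \chi_0'(z^*) = 0$ in $h$. If $\alpha_0 > 0$, keeping $c$ bounded away from $0$ makes $\beta_0 e^{-chz^*} \to 0$ as $h \to \infty$, so $\chi_0$ degenerates to $z^2 - cz + \alpha_0$ and $c^{\frak L}_0(h) \to 2\sqrt{\alpha_0}$. For $\alpha_0 \leq 0$ I would rescale $\omega := chz^*$; the double-root system becomes
\[
c^2 h(\omega + 1 - h\alpha_0) = \omega(\omega+2), \qquad c^2 h^2 \beta_0 e^{-\omega} = 2\omega - c^2 h.
\]
Setting $C := ch$, the limit $h \to +\infty$ gives $-C^2\alpha_0 = \omega(\omega+2)$ and $C^2\beta_0 e^{-\omega} = 2\omega$; eliminating $C^2$ recovers the defining equation $-2\alpha_0 = \beta_0 e^{-\omega_0}(2 + \omega_0)$ of $\omega_0$, so $\omega\to\omega_0$ and $C \to \sqrt{2\omega_0/\beta_0}\,e^{\omega_0/2} = \theta_1(\alpha_0,\beta_0)$. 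The hardest step is the second paragraph, specifically the sharp ordering of the complex roots: one-sided bounds fall out of the real/imaginary decomposition, but excluding double complex roots and identifying $\{\lambda_1,\lambda_2\}$ as a simple conjugate pair with common real part strictly less than $\lambda$ appears to require either carefully tailored Rouch\'e estimates or the Mallet--Paret--Sell discrete Lyapunov functional mentioned in the introduction.
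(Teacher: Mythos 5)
Your architecture for the real-root analysis (convexity of $\chi_0$ on $\R$, minimizer $z^*>c/2$, existence and uniqueness of $c^{\frak L}_0$), the monotonicity (implicit differentiation of the double-root system), and the large-$h$ asymptotics (rescaling $\omega=chz^*$ and passing to the limit) is sound and essentially reproduces what the paper does; the paper simply refers these steps to the analogous computation for $c^{\frak L}_\kappa$ in Subsection~5.1. Your real/imaginary split $\chi_0(x_0)=y_0^2+\beta_0e^{-chx_0}(1-\cos(chy_0))$ is a good tool, and you can push it further than you do: combining it with the imaginary-part identity and $|\sin t|\leq\min\{1,|t|\}$ already gives, for \emph{every} non-real root, the strict inequality $2x_0-c<ch\beta_0e^{-chx_0}$, i.e.\ $\chi_0'(x_0)<0$ and hence $x_0<z^*<\mu$; so your separate Rouch\'e step to exclude $\{\Re z>\mu\}$ is not needed (and, as you note, would be awkward since $z^2-cz+\alpha_0$ has a real zero to the right of $\mu$).

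There is, however, a genuine gap at the crux, which you yourself flag: the ordering (\ref{por1}) with strict inequality $\Re\lambda_1=\Re\lambda_2<\lambda$, the simplicity of all complex roots, and the imaginary bound $|\Im z_0|>\pi/(ch)$ in the residual subcase $x_0\in[c/2,\lambda]$ are asserted but not proven. Your two suggestions (monotonicity of $|\chi_0(x+iy)|^2$ along verticals, or the Mallet-Paret--Sell functional) are a program, not an argument. The paper settles precisely this point by a different device: choosing $q_0\in(0,\min\{\lambda,c/2\})$ with $Q_0:=cq_0-q_0^2-\alpha_0\in(0,\beta_0e^{-chq_0})$, the affine substitution $\omega=(z-q_0)(c-2q_0)/Q_0$ turns (\ref{char1}) into the normalized equation $\epsilon\omega^2-\omega-1+\gamma e^{-\omega h'}=0$ with $\epsilon,h'>0$ and, crucially, $\gamma>1$, for which (\ref{por1}) and the attendant simplicity statement were established in \cite[Lemma~2.3]{TT}. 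Without either reproducing that lemma or supplying an equivalent argument, your proof of (\ref{por1}) is incomplete. Note also that the paper's displayed one-line derivation of the imaginary bound, $(2x_0-c)|y_0|<0\Rightarrow ch|y_0|>\pi$, silently presupposes $x_0<c/2$, which is not automatic when $x_0$ is allowed up to $\lambda$ (indeed $\lambda>c/2$ can occur for $c$ near $c^{\frak L}_0$); so the concern you raise about that subcase is well founded and is covered by the cited lemma rather than by the displayed inequality.
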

\begin{lemma} \label{lc3}  Assume that all conditions of Lemmas \ref{lc2} and \ref{lc1} are satisfied. Then equation $c^{\frak L}_\kappa(h)= c^{\frak L}_0(h)$  has exactly one non-negative solution $h_0$ if
$\theta(\alpha_\kappa,\beta_\kappa) < \theta_1(\alpha_0,\beta_0)$
and does not have any non-negative solution otherwise.
\end{lemma}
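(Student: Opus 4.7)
The plan is to study the difference $\Delta(h) := c^{\frak L}_\kappa(h) - c^{\frak L}_0(h)$, whose zeros on $[0, +\infty)$ are exactly the solutions we seek. By Lemmas \ref{lc2} and \ref{lc1}, both $c^{\frak L}_\kappa$ and $c^{\frak L}_0$ are continuous and strictly decreasing, with $\Delta(0^+) = +\infty$ since $c^{\frak L}_\kappa(0) = +\infty$ while $c^{\frak L}_0(0) = 2\sqrt{\alpha_0+\beta_0}$ is finite (so any zero must satisfy $h_0 > 0$). As $h \to +\infty$, the asymptotic formulae in those lemmas yield $h\,\Delta(h) \to \theta(\alpha_\kappa,\beta_\kappa) - \theta_1(\alpha_0,\beta_0)$ when $\alpha_0 \leq 0$; in the residual case $\alpha_0 > 0$ one instead has $c^{\frak L}_0(h) \to 2\sqrt{\alpha_0} > 0$ against $c^{\frak L}_\kappa(h) \to 0$, giving $\Delta(h) \to -2\sqrt{\alpha_0}$, consistent with extending $\theta_1 = +\infty$ in that regime.

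Granted this boundary analysis, the \emph{existence} half is immediate from the intermediate value theorem: if $\theta(\alpha_\kappa,\beta_\kappa) < \theta_1(\alpha_0,\beta_0)$, then $\Delta$ is continuous, positive near $0$, and negative for all sufficiently large $h$, so it must vanish somewhere in between. For the \emph{non-existence} half, when $\theta(\alpha_\kappa,\beta_\kappa) \geq \theta_1(\alpha_0,\beta_0)$, I would combine the asymptotic bound $h\,\Delta(h) \to \theta - \theta_1 \geq 0$ (sharpened by a higher-order expansion in the borderline case of equality) with the uniqueness argument below: if $\Delta$ had an interior zero, it would need to return to non-negative values afterward, and the sign-change analysis will forbid this.

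The main obstacle is establishing uniqueness, which I would handle by proving strict transversality at any crossing: at $h_0$ with $c^{\frak L}_\kappa(h_0) = c^{\frak L}_0(h_0) = c^*$ one must have $(c^{\frak L}_\kappa)'(h_0) < (c^{\frak L}_0)'(h_0) < 0$, so $\Delta$ only crosses zero from above to below, ruling out a second zero. The derivatives $(c^{\frak L}_j)'(h)$ are obtained by implicit differentiation of the coupled tangency systems $\chi_j(\lambda_j) = 0$, $\chi_j'(\lambda_j) = 0$ with $j \in \{\kappa, 0\}$, producing rational expressions in $c^*$, $h_0$, $\lambda_j$, $\alpha_j$, $\beta_j$. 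The desired inequality should reduce to an elementary estimate encoding the structural asymmetry $\lambda_\kappa < 0 < \lambda_0$ together with the sign conditions $\alpha_\kappa + \beta_\kappa < 0 < \alpha_0 + \beta_0$ and $\beta_\kappa < 0 \leq \beta_0$. Once transversality is in place, at most one zero of $\Delta$ can exist, and it exists precisely when $\theta(\alpha_\kappa,\beta_\kappa) < \theta_1(\alpha_0,\beta_0)$.
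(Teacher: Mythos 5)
Your proposal follows essentially the same route as the paper: reduce to the sign analysis of $\Delta(h) = c^{\frak L}_\kappa(h)-c^{\frak L}_0(h)$, use $\Delta(0^+)=+\infty$ and the asymptotics at $h=+\infty$ together with the intermediate value theorem, and close uniqueness via a transversality inequality for the derivatives at any crossing. The paper carries out precisely the implicit differentiation you defer: differentiating the first equation of the tangency system $\chi_j(\lambda_j)=0$, $\chi_j'(\lambda_j)=0$ at a common value $c^*=c^{\frak L}_\kappa(h_1)=c^{\frak L}_0(h_1)$ yields
\[
\frac{d}{dh}c^{\frak L}_\kappa\Big|_{h_1} = -\frac{c^*}{h_1}+\frac{(c^*)^3}{2h_1\lambda_2(h_1)}
\;<\;
-\frac{c^*}{h_1}+\frac{(c^*)^3}{2h_1\lambda(h_1)} = \frac{d}{dh}c^{\frak L}_0\Big|_{h_1},
\]
where the inequality is exactly the ``structural asymmetry'' $\lambda_2(h_1)<0<\lambda(h_1)$ you anticipated, with no further elementary estimate needed. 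So your outline is the paper's proof in content; what remains in your write-up is to actually perform that short computation rather than assert that it ``should reduce'' to the sign condition, and to replace the speculative ``higher-order expansion'' in the borderline case $\theta=\theta_1$ by the simpler observation the paper uses: once the unique-and-downward crossing property is established, $\Delta>0$ near $0$ forces $\Delta>0$ on all of $\R_+$ whenever $\liminf_{h\to\infty}h\Delta(h)\geq 0$, which covers $\theta\geq\theta_1$ in one stroke.
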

\begin{corolary} Set
$ \mathcal{D}_{\frak L} = \{(h,c): h \geq 0, \   c^{\frak L}_0(h)
\leq c \leq c^{\frak L} _\kappa(h)\}\cap \R^2 \subset \R_+^2. $
Then $\mathcal{D}_{\frak L}$ is a connected closed domain
containing $\{0\} \times [c^{\frak L}_0(0), +\infty)$.
\end{corolary}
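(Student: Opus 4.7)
The plan is to describe $\mathcal{D}_{\frak L}$ as a ``vertical slab'' between the graphs of the two boundary functions $h \mapsto c^{\frak L}_0(h)$ and $h \mapsto c^{\frak L}_\kappa(h)$, and to read off all three claims (containment of the half-line, closedness, connectedness) directly from this picture together with Lemmas \ref{lc1}--\ref{lc3}.

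First I would collect the qualitative information on the two curves. By Lemma \ref{lc1}, $c^{\frak L}_0$ is strictly positive, strictly decreasing, and (by the implicit function theorem applied to $\chi_0$ at its double positive root) continuous on all of $[0,+\infty)$. By Lemma \ref{lc2}, $c^{\frak L}_\kappa$ is strictly decreasing, with $c^{\frak L}_\kappa(0)=+\infty$, finite and continuous on $(0,+\infty)$ (by the same implicit-function argument applied to $\chi_\kappa$ at its double negative root), and $c^{\frak L}_\kappa(h)\to+\infty$ as $h\to 0^+$. Lemma \ref{lc3} then tells me that the set $I:=\{h\geq 0:\ c^{\frak L}_0(h)\leq c^{\frak L}_\kappa(h)\}$ is either $[0,+\infty)$ (when the graphs never meet) or a compact interval $[0,h_0]$ (when they meet at the unique point $h_0>0$). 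In either case $I$ is a connected interval containing $0$; the vertical slice of $\mathcal{D}_{\frak L}$ over $h\in I$ is the closed interval $[c^{\frak L}_0(h),c^{\frak L}_\kappa(h)]$ for $h>0$ and the closed half-line $[c^{\frak L}_0(0),+\infty)$ for $h=0$. This last observation already yields $\{0\}\times[c^{\frak L}_0(0),+\infty)\subset\mathcal{D}_{\frak L}$.

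Connectedness then follows by a two-leg path argument: given $(h_1,c_1),(h_2,c_2)\in\mathcal{D}_{\frak L}$ with $h_1\leq h_2$, first move vertically from each $(h_j,c_j)$ to $(h_j,c^{\frak L}_0(h_j))$, then slide along the continuous graph of $c^{\frak L}_0$ from $h_1$ to $h_2$; each vertical leg stays in $\mathcal{D}_{\frak L}$ because every slice is a full interval reaching down to $c^{\frak L}_0(h_j)$, and the horizontal leg lies in $\mathcal{D}_{\frak L}$ by definition. For closedness in $\R^2$ I would take a sequence $(h_n,c_n)\in\mathcal{D}_{\frak L}$ converging to some $(h_\star,c_\star)$ and pass to the limit in the two defining inequalities; continuity of $c^{\frak L}_0$ on $[0,+\infty)$ handles the lower bound, while for the upper bound the case $h_\star>0$ uses continuity of $c^{\frak L}_\kappa$ on $(0,+\infty)$ and the case $h_\star=0$ is vacuous because the upper constraint then reads $c_\star\leq+\infty$.

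The only step requiring genuine attention is the behaviour of $c^{\frak L}_\kappa$ near $h=0$ (the blow-up to $+\infty$) and the handling of sequences with $h_n\to 0^+$ in the closedness argument; apart from this mild subtlety, the proof is a direct unpacking of the preceding three lemmas.
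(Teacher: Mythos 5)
Your unpacking is correct, and since the paper states this Corollary without a separate proof (treating it as an immediate consequence of Lemmas \ref{lc2}--\ref{lc3}), your argument is the one implicitly intended: read off the slice structure from the two monotone boundary curves, use Lemma \ref{lc3} for the shape of the base interval $I$, and establish connectedness and closedness by elementary topology. One small inaccuracy in the picture you draw of $c^{\frak L}_\kappa$: by the Appendix proof of Lemma \ref{lc2}, $c^{\frak L}_\kappa$ is not finite on all of $(0,+\infty)$ with a blow-up only at $h\to 0^+$; rather it equals $+\infty$ on a whole initial interval $[0,h_a]$ with $h_a>0$, and is finite and strictly decreasing only for $h>h_a$. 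This does not undermine the argument, since, as you yourself observe, the constraint $c\leq c^{\frak L}_\kappa(h)$ is vacuous wherever $c^{\frak L}_\kappa=+\infty$; just make sure the ``vacuous case'' of your closedness argument covers all $h_\star\in[0,h_a]$, not only $h_\star=0$, and that the continuity you invoke for the upper bound is continuity of $c^{\frak L}_\kappa$ on $(h_a,+\infty)$ together with $c^{\frak L}_\kappa(h)\to+\infty$ as $h\to h_a^+$.
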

Figure 2 below presents two possible forms of
$\mathcal{D}_{\frak L}$, in the second case
$\theta_1(\alpha_0, \beta_0) < \theta(\alpha_\kappa,
\beta_\kappa)$.

Next, let $\phi $ be a strictly monotone wavefront of (\ref{e1}).
The characteristic exponents $\Lambda_\pm$ of $\phi$ are defined
as $ \Lambda_\pm(\phi):= \lim_{t \to \pm \infty}  (1/t)\ln
|\phi(\pm\infty) - \phi(t)|. $
\begin{definition}\label{lpm}
 Let $\mathcal{D}_{\frak N}$ stand for  the maximal  connected open (in topology of $\R_+^2$)
component of  the set \[ \{(h,c) \in \mathcal{D}_{\frak L} :
\Lambda_-(\phi) = \lambda(c), \Lambda_+(\phi) = \lambda_2(c)\ {\rm
for\ each \ monotone \ wavefront \ } \phi \}
\]
which has a non-empty  intersection (cf. Lemma \ref{vt3}) with the vertical line
$h=0$. \end{definition} In general, description of
$\mathcal{D}_{\frak N}$ is  a very difficult task, related to the
determination of the minimal speed of propagation \cite{TPT}. But when the nonlinearity $f$ 
is dominated by its linearizations at the equilibria $0$ and $\kappa$, this task can be easily accomplished:
\begin{lemma}\label{tdo}
$\overline{\mathcal{D}_{\frak{N}}}$ coincides with
$\mathcal{D}_{\frak{L}}$ for each  $f(x,y) \in C^{1,\gamma}$
satisfying
\[
 f(x,y) \leq \alpha_0x +\beta_0 y, \quad   f(x,y) \leq \alpha_\kappa(x-\kappa)+\beta_k(y-\kappa), \ (x,y) \in [0, \kappa]^2.
\]
\end{lemma}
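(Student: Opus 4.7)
The first inclusion $\overline{\mathcal{D}_{\frak N}}\subseteq\mathcal{D}_{\frak L}$ is immediate: by definition $\mathcal{D}_{\frak N}\subseteq\mathcal{D}_{\frak L}$, and the Corollary ensures $\mathcal{D}_{\frak L}$ is closed. The real content is the opposite inclusion $\mathcal{D}_{\frak L}\subseteq\overline{\mathcal{D}_{\frak N}}$; to obtain it I would verify that every interior point $(h,c)\in\Int\mathcal{D}_{\frak L}$ already belongs to $\mathcal{D}_{\frak N}$. By the openness of the characteristic-exponent condition in $(h,c)$, the connectedness of $\Int\mathcal{D}_{\frak L}$, and its non-empty trace on $\{h=0\}$, this reduces to the pointwise claim: for every $(h,c)$ with $c^{\frak L}_0(h)<c<c^{\frak L}_\kappa(h)$ and every monotone wavefront $\phi$ of (\ref{e1}), $\Lambda_-(\phi)=\lambda(c)$ and $\Lambda_+(\phi)=\lambda_2(c)$.

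For the asymptotics at $-\infty$ I would rewrite (\ref{e1}) as
\[
\phi''(t)-c\phi'(t)+\alpha_0\phi(t)+\beta_0\phi(t-ch)=-R_-(t),
\]
with $R_-(t):=f(\phi(t),\phi(t-ch))-\alpha_0\phi(t)-\beta_0\phi(t-ch)$. The $C^{1,\gamma}$-smoothness of $f$ yields $R_-(t)=O(\phi(t)^{1+\gamma})$ near $-\infty$, while the first domination hypothesis forces $R_-\leq 0$. Standard a priori bounds for monotone monostable wavefronts (or a direct comparison exploiting the positivity of $\chi_0$ on the interval $(0,\lambda(c))$) give the preliminary exponential estimate $\phi(t)\leq Ce^{\rho t}$ for $t\ll 0$ and some $\rho\in(0,\lambda(c))$; consequently the bilateral Laplace transform
\[
F(z):=\int_{-\infty}^{0}e^{-zt}\phi(t)\,dt
\]
is holomorphic on the left half-plane $\Re z<\Lambda_-(\phi)$. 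Applying $F$ to the displayed equation produces $\chi_0(z)F(z)=B(z)-\widehat{R_-}(z)$, where $B(z)$ is entire (boundary data at $t=0$) and, thanks to $R_-=O(\phi^{1+\gamma})$, $\widehat{R_-}$ extends analytically to the strictly larger half-plane $\Re z<(1+\gamma)\Lambda_-(\phi)$. An Ikehara-type Tauberian theorem combined with the positivity of $\phi$ identifies the abscissa $\Lambda_-(\phi)$ with a real zero of $\chi_0$ in that strip; by Lemma \ref{lc1} it is either $\lambda(c)$ or $\mu(c)$. The alternative $\Lambda_-(\phi)=\mu(c)$ would force the non-generic cancellation $B(\lambda(c))=\widehat{R_-}(\lambda(c))$ in order for $F$ to avoid a pole at $z=\lambda(c)$, and this is incompatible with the joint sign conditions $\phi>0$ and $R_-\leq 0$; hence $\Lambda_-(\phi)=\lambda(c)$. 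A mirror argument applied to $\psi:=\kappa-\phi$, using the second domination inequality and the characteristic function $\chi_\kappa$ supplied by Lemma \ref{lc2}, delivers $\Lambda_+(\phi)=\lambda_2(c)$.

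The main obstacle is the rigidity step that selects $\lambda(c)$ rather than $\mu(c)$ (and, symmetrically, $\lambda_2(c)$ rather than $\lambda_1(c)$) as the actual characteristic exponent: it requires using simultaneously the strict positivity of the profile and the one-sided sign of the remainder $R_\pm$, and the Tauberian input has to be carried out with enough precision to exclude complex zeros of $\chi_0$ and $\chi_\kappa$ from being abscissas of convergence. This is precisely the context in which the discrete Lyapunov functionals of Mallet--Paret and Sell announced in the abstract become indispensable, and it is also the step where the technical hypothesis $\beta_0\geq 0$ plays a role.
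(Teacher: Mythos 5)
Your reduction --- it suffices to show $\Int\,\mathcal{D}_{\frak L}\subset\mathcal{D}_{\frak N}$, i.e.\ that for each $(h,c)\in\Int\,\mathcal{D}_{\frak L}$ every monotone front has characteristic exponents $\lambda(c)$ and $\lambda_2(c)$ --- is the same as the paper's, and the Laplace-transform philosophy is in the right spirit. However, the step you yourself flag as the crux is exactly the one your argument does not deliver. Working with the one-sided transform $F(z)=\int_{-\infty}^0 e^{-zt}\phi(t)\,dt$ you get $\chi_0(z)F(z)=B(z)-\widehat{R_-}(z)$ and then assert that the cancellation $B(\lambda(c))=\widehat{R_-}(\lambda(c))$, which would be needed for $\Lambda_-(\phi)=\mu(c)$, is ``incompatible with $\phi>0$ and $R_-\leq 0$.'' But the boundary contribution
\[
B(z)=-\phi'(0)+(c-z)\phi(0)+\beta_0 e^{-zch}\int_{-ch}^0 e^{-zs}\phi(s)\,ds
\]
contains summands of both signs, and $\widehat{R_-}(\lambda(c))\leq 0$ alone does not preclude the equality; the claimed incompatibility is simply a restatement of what needs to be proved.

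The paper avoids boundary terms entirely and obtains an explicit, manifestly positive coefficient. For the $+\infty$ side it sets $y=\kappa-\phi$ and writes
\[
y''-cy'+\alpha_\kappa y+\beta_\kappa y(\cdot-ch)=-S(t),\quad
S(t)=\alpha_\kappa(\phi(t)-\kappa)+\beta_\kappa(\phi(t-ch)-\kappa)-f(\phi(t),\phi(t-ch))\geq 0,
\]
where $S\geq 0$ follows from the domination hypothesis. Roughness of the exponential dichotomy (\cite[Lemma 4.3]{HL}) first gives $y,y'=O(e^{-\rho t})$, hence $S=O(e^{-\rho(1+\gamma)t})$, at $+\infty$; then \cite[Proposition 7.2]{FA} yields $y(t)=ae^{\lambda_2 t}+o(e^{(\lambda_2-\delta)t})$, and \cite[Lemma 28]{GT} identifies $a=-\chi_\kappa'(\lambda_2)^{-1}\int_\R e^{-\lambda_2 s}S(s)\,ds$. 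Positivity is now immediate: $S\geq 0$, $S\not\equiv 0$ (otherwise the bounded strictly monotone $y$ would solve the hyperbolic constant-coefficient limit equation), and $\chi_\kappa'(\lambda_2)<0$ because $\lambda_2$ is the middle real root. No Tauberian argument and no boundary data to control.

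Your closing sentence also misplaces the key tools. The discrete Lyapunov functionals of Mallet--Paret and Sell play no role in Lemma \ref{tdo}; they are used in Lemma \ref{lmac}, to prove the surjectivity of $\frak{F}_c$ in Theorem \ref{thIF}. And $\beta_0\geq 0$ enters here only indirectly, through the spectral picture of Lemma \ref{lc1} (in particular the ordering (\ref{por1})), which ensures all complex zeros of $\chi_0$ sit strictly to the left of $\lambda(c)$; the selection of the slow root is accomplished solely by the residue formula above.
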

In other cases, we can still indicate explicitly a substantial subset of
 $\mathcal{D}_{\frak N}$, see Section 2.

In the sequel, we  will consider the following sign/monotonicity assumptions:\\
\begin{description}
\item[ ({\bf M})]  Each profile  $\phi: \R \to (0,\kappa)$ of
travelling front to (\ref{e1})  is a monotone function. \item[{\bf (MG)}]
$\alpha_0+\beta_0 > 0,\ \alpha_0 <0,\ \beta_0 >0,\ \alpha_\kappa <
0,\ \beta_\kappa < 0,$ and for each strictly increasing  $\zeta\in
C^2(\R),$ $\zeta(-\infty)=0, \ \zeta(+\infty) = \kappa$, it holds
that $f_1(\zeta(t),\zeta(t-ch)) \leq 0,$ $ t \in \R$,  while
$f_2(\zeta(t),\zeta(t-ch))$ has a unique zero on  $\R$. \item[{\bf
(KPP)}] $\beta_0 = 0, \alpha_0 > 0, \alpha_\kappa = 0,
\beta_\kappa < 0, $ and for each strictly increasing
$C^2$-function $\zeta= \zeta(t), \ \zeta(-\infty)=0, \
\zeta(+\infty) = \kappa$, it holds that $\alpha_0 \geq
f_1(\zeta(t),\zeta(t-ch)) \geq 0,$ $ t \in \R$,  while
$f_2(\zeta(t),\zeta(t-ch)) \leq 0$ on  $\R$.
\end{description}

\vspace{3mm}

Now we are in position to state the main result of this work:
\begin{theorem}  \label{ngz}Assume that  either hypotheses $({\bf M})\& ({\bf MG})$ or  $({\bf M})\&({\bf KPP})$ are satisfied. Then there is a global family $\mathcal{F} =
\{\phi(\cdot,h,c),  (h,c) \in \overline{\mathcal{D}_{\frak N}}\}$
of monotone travelling fronts to (\ref{pe}). Moreover, if $u =
\phi(\nu \cdot x+ct)$ is an eventually monotone front to
(\ref{pe}), then $(h,c) \in \mathcal{D}_{\frak L}$.
\end{theorem}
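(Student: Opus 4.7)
My starting point is the classical case $h=0$: for each $c\geq c_*^{\frak N}(0)$ there is a unique strictly monotone wavefront $\phi_0(\cdot,c)$ of (\ref{e1}) whose tails satisfy (\ref{afe}) with rates $\lambda(c),\lambda_2(c)$. I would look for wavefronts at parameter $(h,c)\in\mathcal{D}_{\frak N}$ in the form $\phi = \psi_{(h,c)} + v$, where $\psi_{(h,c)}$ is a smooth monotone \emph{approximant}, equal to $\phi_0$ at $h=0$ and depending continuously on $(h,c)$, with prescribed exponential tails of rates $\lambda(c),\lambda_2(c)$, and $v$ lies in a mobile weighted $C^2$-space $X_{(h,c)}$ whose weight $\rho_{(h,c)}(t)$ decays at $-\infty$ faster than $e^{\lambda(c)t}$ and at $+\infty$ faster than $e^{-\lambda_2(c)t}$. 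As $(h,c)$ varies over $\mathcal{D}_{\frak N}$, Lemmas \ref{lc1} and \ref{lc2} guarantee that $\lambda(c),\lambda_2(c)$ are the smallest-in-modulus real exponents of the corresponding characteristic equations and lie strictly inside the spectral gap; the weight $\rho_{(h,c)}$ therefore moves continuously with the parameters, and the linearization at $\psi_{(h,c)}$ stays hyperbolic at both infinities.

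The core analytic step is the Fredholm analysis of
\begin{equation*}
L_{(h,c)}v(t) = v''(t) - cv'(t) + a(t)v(t) + b(t)v(t-ch),
\end{equation*}
with $a(t) = f_1(\phi(t),\phi(t-ch))$, $b(t) = f_2(\phi(t),\phi(t-ch))$. The exponential dichotomies supplied by $(h,c)\in\mathcal{D}_{\frak L}$ make $L_{(h,c)}$ Fredholm on $X_{(h,c)}$, and $\phi'\in\ker L_{(h,c)}$ by translation invariance. The crucial non-trivial task is to show that $\ker L_{(h,c)} = \R\phi'$. Here I would invoke the Mallet-Paret--Sell discrete Lyapunov functional $V$: under $({\bf M})\&({\bf MG})$ we have $a\leq 0$ and $b$ changes sign exactly once, so after a one-dimensional reduction $L_{(h,c)}$ falls into the monotone-feedback framework and $V$ is non-increasing along solutions; under $({\bf M})\&({\bf KPP})$ we have $a\geq 0,\ b\leq 0$, which again fits the cyclic monotone setting. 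The functional $V$ forces every bounded kernel element to have finitely many sign changes, and combined with the weighted-space decay, which prescribes the \emph{real} exponents $\lambda(c),\lambda_2(c)$ at $\pm\infty$ rather than the oscillatory modes in (\ref{por1}), this pins the kernel down to $\R\phi'$. A Lyapunov--Schmidt reduction, using the transversality of $\partial_c\psi_{(h,c)}$ to $\mathrm{range}\,L_{(h,c)}$, then yields a smooth local branch of genuine wavefronts, and by $({\bf M})$ every such profile is automatically monotone.

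With local existence established, I would globalize by a standard openness/closedness argument inside $\mathcal{D}_{\frak N}$. Openness is the Lyapunov--Schmidt statement above. Closedness uses the a priori bound $\phi\in[0,\kappa]$, DDE regularity, an Ascoli--Arzel\`a extraction along any sequence $(h_n,c_n)\to(h^*,c^*)\in\overline{\mathcal{D}_{\frak N}}$, $({\bf M})$ to preserve monotonicity in the limit, and the necessity part of the theorem (proved next) to identify the correct tail exponents of the limiting profile and so keep it in the branch. Non-emptiness at $h=0$ is supplied by \cite{GK,LZii,TPT}. For the final assertion, an eventually monotone $\phi$ must possess a real exponential tail at $-\infty$ along a positive root of $\chi_0(z)=0$, which by Lemma \ref{lc1} forces $c\geq c_0^{\frak L}(h)$; monotonicity at $+\infty$ similarly requires a real negative root of $\chi_\kappa(z)=0$, forcing $c\leq c_\kappa^{\frak L}(h)$ by Lemma \ref{lc2}, hence $(h,c)\in\mathcal{D}_{\frak L}$. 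I expect the \emph{main obstacle} to be the kernel computation: verifying in detail that the Mallet-Paret--Sell machinery applies to $L_{(h,c)}$ under the non-quasi-monotone hypotheses $({\bf MG})$ and $({\bf KPP})$, and that the mobile weight correctly excludes the bounded solutions coming from the complex spectrum described in (\ref{por1}) uniformly over $\mathcal{D}_{\frak N}$.
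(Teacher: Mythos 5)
Your overall strategy --- Lyapunov--Schmidt reduction in a weighted space that moves with $(h,c)$, Mallet-Paret--Sell to control the linearization, an openness/closedness continuation inside $\mathcal{D}_{\frak N}$, and real-exponent tail analysis for the necessity part --- matches the skeleton of the paper's proof (Sections~3--4), and you have correctly identified the crucial obstacle. There are, however, two places where the proposal as written leaves a genuine gap and departs from what actually makes the argument close.

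First, your reduction rests on ``transversality of $\partial_c\psi_{(h,c)}$ to $\mathrm{range}\,L_{(h,c)}$,'' but you offer no argument for this transversality, and it is not at all clear how one would prove it. The paper circumvents the issue entirely: Theorem~\ref{thIF} establishes that $\frak{F}_c:C^1_\delta\to C_\delta$ is \emph{surjective} of index $1$ for $(h,c)\in\Int\,\mathcal{D}_{\frak L}$, so there is no cokernel and hence no transversality condition to verify. One simply fixes a closed complement $W$ of the one-dimensional kernel, notes that $L:=\frak{F}_{c_0}\big|_W$ is boundedly invertible, and shows (Lemma~\ref{lip}) that $L^{-1}G(h,c,\cdot)$ is a uniform contraction on a small ball in $W$. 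No $c$-derivative of an approximant enters anywhere; and no artificial approximant $\psi_{(h,c)}$ is constructed either --- the continuation perturbs directly off the genuine front $\phi(h_0,c_0)$.

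Second, you propose applying the Mallet-Paret--Sell functional to constrain $\ker L_{(h,c)}$ directly. The paper works with the dual object: via the Hale--Lin range formula (Proposition~\ref{HaLi}), surjectivity of $\frak{F}_c$ is equivalent to the statement that no nontrivial solution of the formal adjoint system (\ref{asya}) satisfies (\ref{fae}). After reversal this becomes the scalar equation (\ref{we22}) with the two-sided decay (\ref{2inw}), and it is on \emph{that} equation that the sign hypotheses of $({\bf MG})$ translate into the feedback inequalities (\ref{fis}) needed for $V^-$ and Proposition~\ref{MPP} (Lemma~\ref{lmac}); the contradiction also requires $|\Im z_0|>\pi/ch$ from Lemma~\ref{lc1} and a rescaling/compactness step to dispose of small solutions. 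Since the index is $1$, ruling out cokernel elements and pinning $\ker$ to $\R\phi'$ are equivalent, so your framing is dual to the paper's --- but the detailed verification you deferred \emph{is} the substance of the result, and the adjoint route is what the Hale--Lin machinery makes tractable. Finally, your assertion that both $({\bf MG})$ and $({\bf KPP})$ fall under the monotone-feedback umbrella is not borne out: under $({\bf KPP})$ Lemma~\ref{lmac} does \emph{not} use $V^-$ at all, but a direct ODE comparison (integrating $v''-cv'+\alpha_0 v=D(t)\geq 0$ and comparing the resulting growth rates $\lambda<\mu<\lambda_3$). That branch of the argument would need to be rethought in your scheme.
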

\begin{remark} a) In consequence, if $\overline{\mathcal{D}_{\frak N}}= \mathcal{D}_{\frak L}$ then Theorem \ref{ngz} provides a criterion of existence of monotone wavefronts. Moreover, what is quite important for applications, this criterion can be formulated explicitly   (in terms of coefficients of the characteristic equations (\ref{char1}), (\ref{char1a}), see Section 2).  b) Theorem 1.4 in \cite{TPT} suggests that $\overline{\mathcal{D}_{\frak N}}$ might
be the maximal domain of the monotone fronts existence even when $\overline{\mathcal{D}_{\frak N}}\not= \mathcal{D}_{\frak L}$. In particular, this would imply that $c^{\frak N}_*(h')= \inf \{c: (h',c) \in  \mathcal{D}_{\frak N} \}$ and that Theorem \ref{ngz}  yields  an existence criterion even when $\overline{\mathcal{D}_{\frak N}}\not= \mathcal{D}_{\frak L}$. In any case,  as we have already mentioned,
the explicit determination of $c^{\frak N}_*$ (and, in consequence, of $\overline{\mathcal{D}_{\frak N}}$)  is a very difficult problem even for non-delayed equations. \quad \quad \qquad c) As we will show, the family of all monotone wavefronts has the following property of local continuity: if $(h',c') \in \mathcal{D}_{\frak N}$ then there exists an open neighborhood $\mathcal U\subset \R^2_+$ of $(h',c')$ and a local family of monotone fronts $\phi_U$ such that $\phi_U(\cdot,h,c)$ depends continuously  on $(h,c) \in \mathcal U$ in the metric of weighted uniform convergence on $\R$.
\end{remark}
Finally, a few words about the organization of the paper.
Theorem \ref{ngz} is proved in Sections 3 and 4, while in the next
section it is applied to two important families of delayed
diffusion equations. Appendix to this paper contains the proofs of
all four lemmas announced in the introduction.
\section{Applications}\label{apa}
\subsection{The KPP type delayed equations}
Recently, a criterion of existence  of monotone fronts for the KPP-Fisher
equation
\begin{equation}\label{kpp}
u_t(t,x)=\Delta u(t,x)+u(t,x)(1-u(t-h,x))
\end{equation}
was established   in \cite{KO} by means of the shooting
techniques and in \cite{GT} by using a constructive monotone
iteration algorithm.  In this section, we apply Theorem \ref{ngz}
to a broad family of equations (\ref{pe}) which contains
(\ref{kpp}) as a particular case. It is worth to mention  that the
monotone wavefronts of the KPP-Fisher delayed  equation
(\ref{kpp}) have an additional nice property: they are {\it
absolutely} unique \cite{FW,GT,HT}. Thus the  family $\mathcal{F}
= \{\phi(\cdot,h,c),  (h,c) \in \mathcal{D}_{\frak L}\}$ of
monotone wavefronts to (\ref{kpp}) is actually globally
continuous.

We will say that monostable nonlinearity $f(x,y)$ in (\ref{pe}) is
of the KPP type, if 
\[ f \in C^{1,\gamma} \ \mbox{for some} \ 
\gamma \in\  (0,1],\quad 
\alpha_0>0, \  \beta_0=0, \ \alpha_\kappa = 0, \  \beta_\kappa
<0,\   \ f(0,y) \equiv 0,   \]
\[ \mbox{and, for all} \   x,y \in
(0,\kappa),
0< f_1(x,y) \leq \alpha_0, \ f_2(x,y) \leq 0,  \  0 < f(x,y) \leq
\beta_k(y-\kappa).
\]
It is then easy to see that  the set $\mathcal{D}_{\frak{L}}$ has
the  form given on Fig. 1, cf. \cite{GT}. 
\begin{figure}[h]
\centering \fbox{\includegraphics[width=5.5cm]{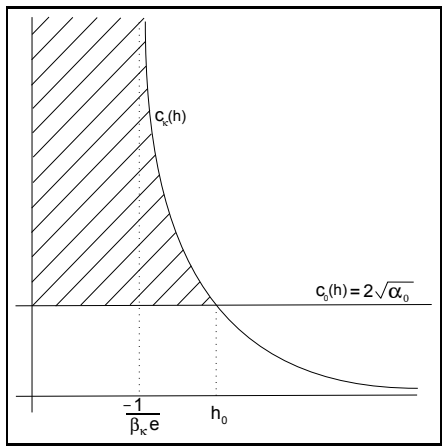}}
\caption{\hspace{0.5cm} Domain $\mathcal{D}_{\frak{L}}$ for the
KPP type delayed equation}
\end{figure}

In fact,  $c=c_{\kappa}(h)$ can be found from the equation
\[2+\sqrt{c^4h^2+4}=-\beta_\kappa c^2h^2\exp\left({1+\frac{2}{c^2h+
\sqrt{c^4h^2+4}}}\right).\] This allows to calculate easily $h_0$ (defined in Lemma \ref{lc3}) and the asymptote $h=-1/(e\beta_\kappa)$. 

\vspace{-2mm}

\begin{theorem}\label{KPPT}
Let $f$ be of the KPP type.  Then there is a monotone
 front $u =
\phi(\nu \cdot x+ct), $ $ |\nu|=1, \ c>0$,  to (\ref{kpp}) if and only if $(h,c) \in \mathcal{D}_{\frak{L}}$.
\end{theorem}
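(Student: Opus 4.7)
The plan is to obtain Theorem \ref{KPPT} as an immediate specialization of Theorem \ref{ngz}, once the hypotheses of the latter have been verified for every KPP-type nonlinearity and it has been shown that, in this class, $\overline{\mathcal{D}_{\frak N}}=\mathcal{D}_{\frak L}$. The ``only if'' direction is already handed to us by the last sentence of Theorem \ref{ngz}: any eventually monotone front $u=\phi(\nu\cdot x+ct)$ forces $(h,c)\in\mathcal{D}_{\frak L}$, so no extra work is required there.

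For the ``if'' direction I would first check hypothesis $({\bf KPP})$. The sign conditions $\beta_0=0$, $\alpha_0>0$, $\alpha_\kappa=0$, $\beta_\kappa<0$ are built into the KPP-type definition. Furthermore, if $\zeta\in C^2(\R)$ is strictly increasing with $\zeta(-\infty)=0,\ \zeta(+\infty)=\kappa$, then $(\zeta(t),\zeta(t-ch))\in(0,\kappa)^2$ for all $t$, so the pointwise inequalities $0<f_1\leq\alpha_0$ and $f_2\leq 0$ in the definition pass directly to $\zeta$, which is exactly what $({\bf KPP})$ requires.

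Next I would verify the dominance hypothesis of Lemma \ref{tdo}. Since $f(0,y)\equiv 0$ and $f_1\leq\alpha_0$,
\begin{equation*}
f(x,y)=x\int_0^1 f_1(sx,y)\,ds\leq \alpha_0 x=\alpha_0 x+\beta_0 y,
\end{equation*}
while the KPP-type assumption furnishes directly $f(x,y)\leq \beta_\kappa(y-\kappa)=\alpha_\kappa(x-\kappa)+\beta_\kappa(y-\kappa)$ (recall $\alpha_\kappa=0$). Lemma \ref{tdo} then gives $\overline{\mathcal{D}_{\frak N}}=\mathcal{D}_{\frak L}$, so it only remains to establish hypothesis $({\bf M})$ to be able to invoke Theorem \ref{ngz}.

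The main obstacle is precisely hypothesis $({\bf M})$: each profile $\phi:\R\to(0,\kappa)$ of a wavefront of (\ref{e1}) must be monotone. This is delicate because $f_2\leq 0$ is the opposite sign of the quasi-monotonicity that would allow a direct comparison argument. My strategy would be to combine the asymptotic classification at $\pm\infty$ provided by Lemmas \ref{lc1}--\ref{lc2} (for $(h,c)\in\mathcal{D}_{\frak L}$ the relevant characteristic equations have the real simple roots required to rule out oscillation at each end) with a sliding/translation argument on the whole line: since $f_1\geq 0$, the difference $\phi(\cdot+s)-\phi(\cdot)$ satisfies a scalar maximum-principle comparison once the $f_2$-term is treated as a source of controlled sign along a putative monotone profile, while $f(x,y)\leq \alpha_0 x$ and $f(x,y)\leq \beta_\kappa(y-\kappa)$ produce sub- and super-solutions at the two ends. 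This is essentially the argument used for the pure KPP-Fisher case in \cite{GT,KO}, adapted to the whole KPP-type class by means of the uniform bounds available on $f_1,f_2$. Once $({\bf M})$ is established, Theorem \ref{ngz} delivers the global family $\mathcal{F}=\{\phi(\cdot,h,c):(h,c)\in\overline{\mathcal{D}_{\frak N}}=\mathcal{D}_{\frak L}\}$ of monotone wavefronts, which together with the necessary direction above yields the claimed equivalence.
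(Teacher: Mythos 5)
Your reduction to Theorem \ref{ngz}, your verification of $({\bf KPP})$, and your explicit check of the sub-tangency hypotheses of Lemma \ref{tdo} (via $f(x,y)=x\int_0^1 f_1(sx,y)\,ds\leq\alpha_0 x$ and $f(x,y)\leq\beta_\kappa(y-\kappa)$) are all correct and match the paper's logical skeleton. The gap is in hypothesis $({\bf M})$: you call it ``the main obstacle,'' outline a sliding/comparison strategy, and then do not carry it out. As written this is not a proof of $({\bf M})$ but a promissory note, and the strategy itself is delicate for exactly the reason you identify (the sign $f_2\leq 0$ defeats naive quasi-monotone comparison).

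You have overlooked that the KPP-type definition already hands you the key inequality $0<f(x,y)$ for $(x,y)\in(0,\kappa)^2$, which makes $({\bf M})$ essentially a one-line observation with no dynamical systems machinery at all. If $\phi:\R\to(0,\kappa)$ is a profile and $\phi'(t_0)=0$ at some $t_0$, then from the profile equation
\[
\phi''(t_0)=c\phi'(t_0)-f(\phi(t_0),\phi(t_0-ch))=-f(\phi(t_0),\phi(t_0-ch))<0,
\]
so every critical point of $\phi$ is a strict local maximum. In particular there can be at most one critical point (two strict local maxima would force a local minimum between them, which would also have to be a strict local maximum). A function with at most one interior critical point, a strict local max, cannot satisfy $\phi(-\infty)=0$, $\phi(+\infty)=\kappa$ with range $(0,\kappa)$: past the maximum $\phi$ would be decreasing and could not approach $\kappa=\sup\phi$ from below at $+\infty$. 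Hence $\phi'$ never vanishes and, given the boundary conditions, $\phi'>0$ everywhere. This is exactly the paper's argument; substituting it for your sketched sliding argument closes the gap and completes the proof.
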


\vspace{-6mm}

\begin{proof} Since, by Lemma \ref{tdo},
$\overline{\mathcal{D}_{\frak{N}}}=\mathcal{D}_{\frak{L}}$, we
have only to check that the  hypotheses  $({\bf M})$ and $({\bf
KPP})$ are satisfied. 
First, 
$\bf{(KPP)}$ clearly holds due to the above definition of the KPP type nonlinearity.  Next, suppose for a moment that $\phi'(t_0)=0$ at
some $t_0\in \R$. Since 
$\phi(t_0),$ $\phi(t_0-ch) \in (0, \kappa)$, we have  $\phi''(t_0)=-f(\phi(t_0),
\phi(t_0-ch))<0$ and therefore $t_0$ is the only critical point of
$\phi$ (strict local maximum),  in contradiction with the boundary conditions at $\pm \infty$.  Thus  we have
$\phi'(t)>0$ for all $t$.
\end{proof}
\subsection{The Mackey-Glass type delayed diffusion equations}
Consider the following monostable equation
\begin{equation}\label{EDPMckG}
u_t(t,x)=\Delta u(t,x)-\delta u(t,x)+g(u(t-h,x)),
\end{equation}
where $C^{1,\gamma}-$continuous $g:\R_+\to \R_+ , \ g(0)=0, \
g(\kappa)=\delta \kappa, \ g'(0)>\delta >0,$ has a unique critical
point (a global maximum) on $(0,\kappa)$.  Clearly, 
$\alpha_0=\alpha_\kappa = -\delta, \ \beta_0=g'(0)>\delta$ and
$\beta_\kappa=g'(\kappa)< 0$.

\begin{theorem} \label{MGth} Let $g$ satisfy the above conditions.
Then there exists a family of monotone
wavefronts $u:=\phi(x\cdot \nu+ct, h,c), \ |\nu|=1, \ c >0,$ parametrized by $(h,c)\in
\overline{\frak{D}}_{\frak{N}}$.
\end{theorem}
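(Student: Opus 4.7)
The plan is to verify both hypotheses of Theorem \ref{ngz} for (\ref{EDPMckG}) and then invoke it. For this equation the nonlinearity entering (\ref{e1}) is $f(x,y)=-\delta x+g(y)$. Since $g$ is not assumed to satisfy $g(y)\leq g'(0)y$ on $[0,\kappa]$ nor an analogous one-sided bound at $\kappa$, Lemma \ref{tdo} need not apply, so, unlike in Theorem \ref{KPPT}, we cannot hope for $\overline{\mathcal{D}_{\frak N}}=\mathcal{D}_{\frak L}$ in general; the statement accordingly asserts only existence on $\overline{\mathcal{D}_{\frak N}}$.

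Verification of \textbf{(MG)} is bookkeeping. One reads off $\alpha_0=\alpha_\kappa=-\delta$, $\beta_0=g'(0)$, $\beta_\kappa=g'(\kappa)$. The assumption $g'(0)>\delta>0$ yields $\beta_0>0$ and $\alpha_0+\beta_0>0$; the existence of a unique critical point of $g$ in $(0,\kappa)$, necessarily a maximum with $g$ strictly decreasing to the right of it down to $g(\kappa)=\delta\kappa$, forces $\beta_\kappa=g'(\kappa)<0$. Finally, for any strictly increasing $\zeta\in C^2(\R)$ with $\zeta(-\infty)=0$, $\zeta(+\infty)=\kappa$, one has $f_1(\zeta(t),\zeta(t-ch))\equiv -\delta<0$, while $f_2(\zeta(t),\zeta(t-ch))=g'(\zeta(t-ch))$ has a unique zero on $\R$, corresponding to the single value of $t$ at which $\zeta(\cdot-ch)$ attains the unique critical point of $g$.

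The main obstacle is hypothesis \textbf{(M)}: every profile $\phi:\R\to(0,\kappa)$ of a travelling front of (\ref{EDPMckG}) must be monotone. The short pointwise contradiction used in Theorem \ref{KPPT} is unavailable here, because $f(\phi(t_0),\phi(t_0-ch))=-\delta\phi(t_0)+g(\phi(t_0-ch))$ can take either sign inside $(0,\kappa)^2$, so at a hypothetical critical point of $\phi$ the sign of $\phi''(t_0)$ is not controlled. The strategy is to combine two ingredients. First, the characteristic root analysis of Lemmas \ref{lc1}--\ref{lc2}, together with the delayed analogue of the asymptotic expansion (\ref{afe}), forces $\phi'(t)>0$ for all sufficiently large $|t|$, at the exact exponential rates $\lambda(c)$ near $-\infty$ and $\lambda_2(c)$ near $+\infty$. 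Second, differentiating (\ref{e1}) produces a linear non-autonomous delay equation for $v=\phi'$ to which the discrete Lyapunov functional of Mallet-Paret and Sell applies; this functional counts the sign changes of $v$ on a delay interval and is non-increasing along trajectories. By the asymptotic analysis the count equals zero at both $\pm\infty$, hence it vanishes identically, which rules out any sign change of $\phi'$. Combined with positivity near the endpoints, this gives $\phi'\geq 0$, and strict inequality follows by unique continuation for the linear delay equation. Once \textbf{(M)} is in hand, Theorem \ref{ngz} delivers the desired global family $\mathcal{F}=\{\phi(\cdot,h,c):(h,c)\in\overline{\mathcal{D}_{\frak N}}\}$ of strictly monotone wavefronts.
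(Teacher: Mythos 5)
Your verification of \textbf{(MG)} is correct and essentially identical to the paper's, and the remark that Lemma~\ref{tdo} need not apply here is a fair observation. The divergence — and the problem — is in how you handle \textbf{(M)}. The paper does not prove (M) in house; it simply cites \cite[Theorem~1.1]{TTT}, where the monotonicity of any profile $\phi:\R\to(0,\kappa)$ of (\ref{EDPMckG}) is established by a nontrivial argument. Your attempt to prove it directly by differentiating (\ref{e1}) and applying the Mallet-Paret--Sell functional to $v=\phi'$ has genuine gaps.

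First, the delayed-term coefficient of the equation for $v$, namely $g'(\phi(t-ch))$, changes sign (it is positive near $-\infty$, since $g'(0)>\delta>0$, and negative near $+\infty$, since $g'(\kappa)<0$). The monotonicity of $V^-$ in Proposition~\ref{MPP} requires the feedback inequalities (\ref{fis}) with a \emph{fixed} feedback sign $\delta^*$ on the entire $t$-range under consideration; in the paper's own Lemma~\ref{lmac} this is arranged by restricting to a half-line $[\sigma,+\infty)$ where the sign is constant. Your argument needs monotonicity of the sign-change count across all of $\R$, precisely through the region where the feedback sign flips, so Proposition~\ref{MPP} does not apply as stated. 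Second, $V^-$ takes values in $\{1,3,5,\dots\}$; it never equals zero, so ``the count equals zero at both $\pm\infty$, hence it vanishes identically'' is not meaningful for this functional. Even knowing $V^-(v_t)\equiv 1$ would allow $v$ to cross zero once per delay interval; passing from the constancy of a discrete sign-change count to pointwise positivity of $\phi'$ is exactly the subtle step that \cite{TTT} carries out, and your sketch does not supply it. The ``unique continuation'' finish also needs justification in the delay setting. In short: the overall logical skeleton (verify (MG) and (M), apply Theorem~\ref{ngz}) is right, but the self-contained proof of (M) does not go through as written; the intended and correct route is the citation of \cite[Theorem~1.1]{TTT}.
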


\begin{proof} Observe that the monotonicity assumption $(\bf{M})$ is satisfied in view
of  \cite[Theorem 1.1]{TTT}.
 In order to check $(\bf{MG})$, suppose that  $\zeta\in
C^2(\R,(0,\kappa))$ is a strictly increasing function such that
$\zeta(-\infty)=0$, $\zeta(+\infty)=\kappa$. Then
$f_1(\zeta(t),\zeta(t-ch))=-\delta < 0, \ t \in \R,$ while
$f_2(\zeta(t),\zeta(t-ch))=g'(\zeta(t-ch))$ clearly has a unique
zero on $\R$.
\end{proof}
\subsection{The diffusive Nicholson's equation}
 Equation (\ref{EDPMckG}) with $g(x) = pxe^{-x}$  is called the diffusive Nicholson's equation.
 It is monostable when $p/\delta >1$, with steady state solutions $u_1:= 0$ and $u_2:= \ln(p/\delta)$.
If, in addition,  $p/\delta \leq e$, then $g(x)$
is monotone on $[u_1,u_2]$ and therefore there exists a unique
monotone travelling front for each fixed $c\geq c^{\frak{L}}_0$,
cf.  \cite{AGT,TPT,z}. In fact, this front can be found as a limit
of a converging monotone functional sequence \cite{z}. The
uniqueness can be deduced either by using the Diekmann-Kaper
theory \cite{AGT} or by applying the sliding method of Berestycki
and Nirenberg \cite{TPT}.  Now, if $e<{p}/{\delta}\leq e^2$, then
travelling fronts exist for every fixed $h\geq 0$  and
$c\geq c^{\frak{L}}_0$ \cite{ma1,TT}. However, they are not monotone for large
$c$ and $h$  \cite{TT}. Finally, if $p/\delta > e^2$ then the
wavefronts exist only for $h$ from some bounded set (depending on
$p,\delta$) \cite{TT,TTT}. If $p/\delta > e^2$ and $h$ is large,  then the
Nicholson's equation possesses positive and bounded semi-wavefront
solutions, i.e. solutions $u= \phi(\nu\cdot x+ ct),\
\phi(-\infty)=0,\ \liminf_{t \to +\infty} \phi(t) >0$. It was also
proved in \cite{TTT} that, for $p/\delta \in (e^2, 16.99..)$,  these solutions have monotone leading
edge and that they are either eventually monotone or slowly
oscillating at $+\infty$, cf. Corollary \ref{c24} below. It is an open problem whether there can exist an eventually monotone and non-monotone front for some $p/\delta >e^2$.  Hence, excepting the above mentioned
result from \cite{z}, nothing was known about the existence of
monotone fronts for the Nicholson's equation. Our first assertion
here gives the complete solution to the considered problem for
$p/\delta \leq e^2$:
\begin{theorem} Assume that   $g(x) = pxe^{-x}$   and  $p/\delta \in (e, e^2]$. Then equation  (\ref{EDPMckG})  has a unique (up to a translation) travelling front for each $c \geq c_0^{\frak L}, \ h \geq 0$. This front is monotone if and only if $(h,c) \in \mathcal{D}_{\frak L}$.  The domain
$ \mathcal{D}_{\frak L}$ has two main geometric forms presented on
Fig. 2, where $\nu_0 := 2.808 \dots $ and $\delta h_ae^{\delta
h_a}=(e\ln(p/e\delta))^{-1}$.
\end{theorem}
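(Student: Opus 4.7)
The plan is to apply Theorem \ref{MGth} and Lemma \ref{tdo} to the specific Nicholson nonlinearity $g(x)=pxe^{-x}$, and then to combine the resulting family of monotone fronts on $\mathcal{D}_{\frak L}$ with existing existence/uniqueness results for the Nicholson diffusive equation in order to upgrade this family into the announced dichotomy. First I would verify that $g$ fits the framework of Theorem \ref{MGth}: it is smooth (hence $C^{1,\gamma}$ for any $\gamma$), $g(0)=0$, $g(\kappa)=\delta\kappa$ with $\kappa=\ln(p/\delta)$, $g'(0)=p>\delta$, and $g'(x)=p(1-x)e^{-x}$ has the unique zero $x=1\in(0,\kappa)$ (a global maximum on $[0,\kappa]$) because $\kappa>1$ whenever $p/\delta>e$. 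Theorem \ref{MGth} then yields a family of monotone fronts parametrized by $\overline{\mathcal{D}_{\frak N}}$.

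Next I would show $\overline{\mathcal{D}_{\frak N}}=\mathcal{D}_{\frak L}$ via Lemma \ref{tdo}. With $f(u,v)=-\delta u+g(v)$, the first dominance inequality $f(x,y)\leq \alpha_0 x+\beta_0 y$ reduces to $g(y)\leq py$ on $[0,\kappa]$, which is immediate from $e^{-y}\leq 1$. The second, $f(x,y)\leq \alpha_\kappa(x-\kappa)+\beta_\kappa(y-\kappa)$, reduces (using $\alpha_\kappa=-\delta$ and $\beta_\kappa=g'(\kappa)=\delta(1-\kappa)$) to the tangent inequality $g(y)\leq g(\kappa)+g'(\kappa)(y-\kappa)$ on $[0,\kappa]$. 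Since $g''(x)=p(x-2)e^{-x}\leq 0$ precisely on $[0,2]$, this is equivalent to the concavity of $g$ on $[0,\kappa]$, which holds iff $\kappa\leq 2$, that is, iff $p/\delta\leq e^2$. The bound $p/\delta\in(e,e^2]$ is therefore used in a sharp way, and Theorem \ref{MGth} provides a monotone front for every $(h,c)\in\mathcal{D}_{\frak L}$.

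To finish, I would invoke two classical facts for the Nicholson equation in the regime $p/\delta\in(e,e^2]$: the existence of a (possibly non-monotone) travelling front for every $h\geq 0$ and every $c\geq c^{\frak L}_0$, established in \cite{ma1,TT}, and its uniqueness up to translation, obtainable from the Diekmann--Kaper framework \cite{AGT} or from the sliding method of \cite{TPT}. Combining these: for $(h,c)\in\mathcal{D}_{\frak L}$ the unique front must coincide with the monotone one produced above, while for $(h,c)\not\in\mathcal{D}_{\frak L}$ the ``moreover'' clause of Theorem \ref{ngz} forbids the profile from being eventually monotone, giving the ``only if'' direction of the claimed dichotomy.

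The geometric picture of $\mathcal{D}_{\frak L}$ is extracted by specialising Lemmas \ref{lc1}--\ref{lc3} to $\alpha_0=\alpha_\kappa=-\delta$, $\beta_0=p$, $\beta_\kappa=\delta(1-\kappa)$. Since $\alpha_0<0$, Lemma \ref{lc1} gives $c^{\frak L}_0(+\infty)=0$ together with the explicit decay rate $\theta_1(-\delta,p)$, and a short computation using the Lambert-type equation for $\omega_0$ yields the asymptotic parameter $\delta h_a e^{\delta h_a}=(e\ln(p/e\delta))^{-1}$. The critical value $\nu_0\approx 2.808$ is the unique $p/\delta$ at which $\theta_1(-\delta,p)=\theta(-\delta,\delta(1-\kappa))$, and by Lemma \ref{lc3} it separates the two geometric shapes shown on Figure 2. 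I expect the main conceptual obstruction of the whole programme to be exactly the concavity verification at the upper equilibrium: once $\kappa>2$, the tangent inequality reverses near $\kappa$, Lemma \ref{tdo} ceases to apply, and a genuinely different argument (for the complementary regime $p/\delta>e^2$) would be required.
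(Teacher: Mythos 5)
Your proposal is correct and follows essentially the same route as the paper: apply Theorem \ref{MGth} for the monotone family, Lemma \ref{tdo} to identify $\overline{\mathcal{D}_{\frak N}}$ with $\mathcal{D}_{\frak L}$, cite the existence/uniqueness results from \cite{ma1,TT} and \cite{AGT}, and finish with the nonexistence clause of Theorem \ref{ngz} plus the explicit computations from Lemmas \ref{lc1}--\ref{lc3}. The only cosmetic difference is that you verify the second sub-tangency condition of Lemma \ref{tdo} via concavity of $g$ on $[0,\kappa]$ (i.e.\ $g''\leq 0$, hence $\kappa\leq 2$), whereas the paper states the equivalent condition $\min_{[u_1,u_2]}g'(x)=g'(u_2)$; both yield the tangent-line inequality at $u_2$ and pin down the bound $p/\delta\leq e^2$ in the same sharp way.
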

\begin{figure}[h]
\centering \fbox{\includegraphics[width=5cm]{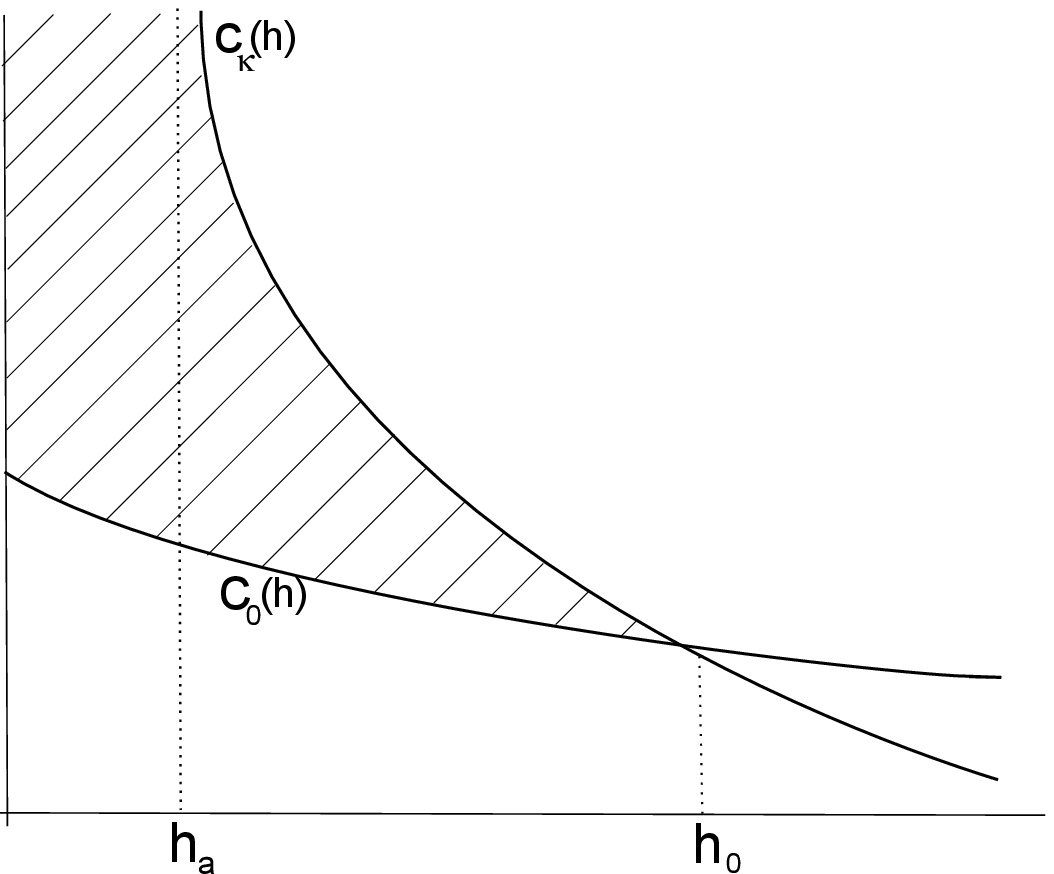}}
\fbox{\includegraphics[width=5cm]{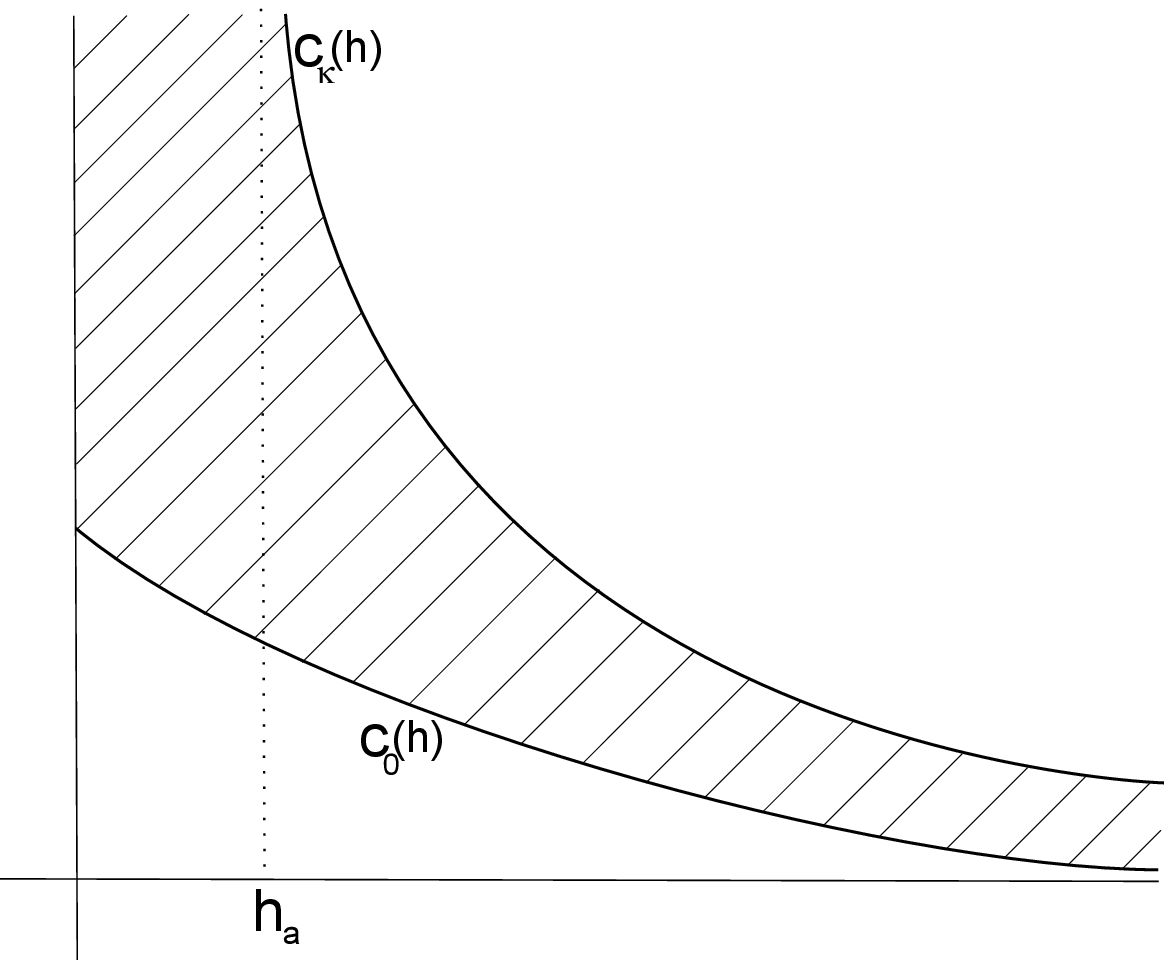}}
\caption{\hspace{0.5cm}$\nu_0<p/\delta$ \hspace{3cm} $\nu_0\geq
p/\delta$}
\end{figure}

\vspace{-6mm}

\begin{proof} The front existence for $c \geq c_0^{\frak L}$ was proved in \cite{ma1,TT}.  The uniqueness statement follows from \cite{AGT}.  If $p/\delta \in (e, e^2]$ then $\min_{[u_1,u_2]} g'(x) = g'(u_2)$ and therefore Lemma \ref{tdo} assures that $\overline{\mathcal{D}_{\frak{N}}}=\mathcal{D}_{\frak{L}}$.
Hence, in order to prove our criterion for the existence of
monotone fronts, it suffices to  invoke Theorem \ref{MGth}.

Next,  $\alpha_0=-\delta$, $\beta_0=p$, $\alpha_\kappa=-\delta$
and $\beta_\kappa=\delta \ln(e\delta/p)$. As a consequence,
functions $c=c_0^{\frak L}(h)$ and $c=c_\kappa^{\frak L} (h)$ are
determined, respectively, by the equations
\[
\frac{c^2+4\delta}{2+\sqrt{c^4h^2+4c^2h^2\delta+4}}=ep\exp\left(-\frac{\sqrt{c^4h^2+4c^2h^2\delta+4}+c^2h}{2}\right),
\ h \geq 0;
\]
\begin{equation}\label{ck+}
\hspace{-7mm}
\frac{2+\sqrt{c^4h^2+4c^2h^2\delta+4}}{ec^2h^2|\beta_\kappa|}=\exp\left(\frac{\sqrt{c^4h^2+4c^2h^2\delta+4}-c^2h}{2}\right),
h > h_a,
\end{equation}
where $h_a$ is such that  $e|\beta_\kappa| h_a\exp({\delta
h_a})=1$. A simple  analysis shows  that $c_\kappa^{\frak L} (h)=
+\infty$ if and only if $h\in [0,h_a]$.  Next, $\theta_1(\alpha_0,
\beta_0)=\sqrt{\frac{2w_0}{p}}e^{w_0/2}$ where $w_0$ is the
positive root of $2\delta/p=e^{-w}(2+w)$ (see Lemma \ref{lc1}).
Similarly, from Lemma \ref{lc2} we infer that
$\theta(\alpha_\kappa,\beta_\kappa)=\sqrt{\frac{2|w_0|}{\delta\ln(p/e\delta)}}e^{w_0/2}$,
where $w_0$ is the negative root of
$-2/\ln(p/e\delta)=e^{-w}(2+w)$. By  Lemma \ref{lc3}, the value of
$\nu_0= p/\delta$ is determined by the condition
$\theta(\alpha_\kappa, \beta_\kappa)=\theta_1(\alpha_0, \beta_0)$.
It is easy to show that
$\nu_0=\frac{p}{\delta}(t_0)=t_0^{-1}(-1+\sqrt{1+2t_0})e^{-1+\sqrt{1+2t_0}}$
with $t_0$ being  the positive root of
\[t_0^{-1}(-1+\sqrt{1+2t_0})\exp\left(-2+\sqrt{1+2t_0}\right)=\exp\left(t_0^{-1}(1+\sqrt{1+2t_0})e^{-1-\sqrt{1+2t_0}}\right).\]
Finally, we find   $\nu_0=2.808\dots \in (e,e^2)$.
\end{proof}

\vspace{-6mm}

\begin{corolary} \label{c24}Suppose that $p/\delta \in (2.718\dots, 2.808\dots]$, then each minimal wavefront is monotone (independently on $h$). If  $p/\delta \in  (2.808\dots, 16.99904\dots]$ and $h > h_0$, then every  minimal wavefront is slowly oscillating at $+\infty$.
\end{corolary}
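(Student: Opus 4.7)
The plan is to combine three inputs already available in this section: the characterization from the preceding Theorem that, for $p/\delta \in (e,e^2]$, a wavefront is monotone if and only if $(h,c) \in \mathcal{D}_{\frak L}$; the one-sided implication in Theorem \ref{ngz} that any eventually monotone front must satisfy $(h,c) \in \mathcal{D}_{\frak L}$; and the dichotomy from \cite{TTT} that any (semi-)wavefront with monotone leading edge for $p/\delta \in (e^2, 16.99\dots)$ is either eventually monotone or slowly oscillating at $+\infty$.

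For the first assertion, assume $p/\delta \in (e, \nu_0]$. By the definition of $\nu_0$ given in the preceding proof, this range corresponds to $\theta_1(\alpha_0,\beta_0) \leq \theta(\alpha_\kappa, \beta_\kappa)$, and Lemma \ref{lc3} then ensures that the graphs of $c^{\frak L}_0$ and $c^{\frak L}_\kappa$ do not meet in $\R_+^2$. Together with $c^{\frak L}_\kappa(0) = +\infty > c^{\frak L}_0(0)$ and the strict decrease of both curves (Lemmas \ref{lc2} and \ref{lc1}), this forces $c^{\frak L}_\kappa(h) \geq c^{\frak L}_0(h)$ for every $h \geq 0$. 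Hence the minimum speed $c = c^{\frak L}_0(h)$ places $(h,c)$ inside $\mathcal{D}_{\frak L}$, and, since $\nu_0 < e^2$, the monotonicity criterion of the preceding Theorem applies to conclude that each minimum wavefront is monotone, independently of $h$.

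For the second assertion, suppose $p/\delta \in (\nu_0, 16.99904\dots]$ and $h > h_0$. Now $\theta(\alpha_\kappa,\beta_\kappa) < \theta_1(\alpha_0,\beta_0)$, so by Lemma \ref{lc3} the two curves do cross at $h_0$, and for $h > h_0$ the strict decrease of both curves forces the reversed inequality $c^{\frak L}_0(h) > c^{\frak L}_\kappa(h)$, i.e.\ $(h, c^{\frak L}_0(h)) \notin \mathcal{D}_{\frak L}$. By Theorem \ref{ngz}, the minimum wavefront at this speed cannot be eventually monotone. Invoking the dichotomy of \cite{TTT}, together with the monotone leading edge property of the minimum wavefront, yields that it must then be slowly oscillating at $+\infty$.

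The main obstacle I anticipate is the uniform application of the \cite{TTT} dichotomy across the whole range $(\nu_0, 16.99904\dots]$: the cited result is stated there for $p/\delta \in (e^2, 16.99\dots)$, whereas for $p/\delta \in (\nu_0, e^2]$ one is dealing with genuine wavefronts converging to $\kappa$ rather than with semi-wavefronts. I expect that the discrete Lyapunov functional arguments of Mallet-Paret and Sell, counting sign changes of the perturbation about the equilibrium $\kappa$, carry over verbatim to this subrange; once this point is checked, both assertions of the corollary follow immediately from the characterizations listed in the first paragraph.
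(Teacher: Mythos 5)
Your proof follows essentially the same route as the paper: for $p/\delta \leq \nu_0$, use the non-intersection of the two critical speed curves to conclude that $(h, c_0^{\frak L}(h)) \in \mathcal{D}_{\frak L}$ for every $h$, then invoke the monotonicity criterion; for $p/\delta > \nu_0$ and $h > h_0$, observe that the minimal speed leaves $\mathcal{D}_{\frak L}$, so Theorem \ref{ngz} excludes eventually monotone fronts, and combine with the \cite{TTT} dichotomy.

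The one point you flag as an ``anticipated obstacle'' — whether the \cite{TTT} dichotomy is legitimate across the whole interval $(\nu_0, 16.99\dots]$, and in particular on the sub-interval $(\nu_0, e^2]$ where one has genuine wavefronts rather than semi-wavefronts — is indeed the only non-automatic step, and you leave it as an expectation rather than a verified claim. The paper closes this gap by a shorter route than the one you sketch: rather than re-running the Mallet--Paret--Sell sign-change argument on the perturbation about $\kappa$, it invokes \cite[Theorem 3]{TTT} directly, noting that its \emph{positive feedback assumption} is satisfied for all $p/\delta \in (\nu_0, 16.99\dots]$. That hypothesis is a pointwise sign condition on $g'$ near $\kappa$ and does not depend on whether $p/\delta$ exceeds $e^2$ or on whether the profile is a full wavefront or only a semi-wavefront. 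So your concern is legitimate but is settled by one citation; you should replace ``I expect that the arguments carry over verbatim'' with a verification that the feedback hypothesis of \cite[Theorem 3]{TTT} holds, rather than attempting to reproduce the discrete Lyapunov functional machinery.
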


\vspace{-6mm}

\begin{proof}  If $p/\delta \leq  2.80\dots$, then the domain $\overline{\mathcal{D}_{\frak{N}}}=\mathcal{D}_{\frak{L}}$ is unbounded from the right (see Fig. 2) and the first statement follows. If  $p/\delta \in  (2.80\dots, 16.99\dots]$  then the positive feedback assumption of \cite[Theorem 3]{TTT} is satisfied and therefore each wavefront is either eventually monotone or slowly oscillating.  However, if $h >h_0$, then none front solution can be eventually monotone due to Theorem \ref{ngz}.
 \end{proof}
Let now $p/\delta > e^2$.  Then $\beta_\kappa^-:= \inf_{x\in
(0,u_2)}(g(x)-g(u_2))/(x-u_2) <0$ and
 $g(x) \leq \beta_\kappa^- (x-u_2)  + g(u_2), \ x \in [0,u_2]$.  We also will need function $c:= c^-_\kappa(h)$ which is implicitly (and analogously to $c_\kappa$) defined by equation (\ref{ck+}) where $\beta_\kappa,  h_a$ are replaced with  $\beta_\kappa^-$ and $h_a^-$ (such that  $e|\beta_\kappa^- | h_a^-\exp({\delta h_a^-})=1$), respectively. In particular,
 $ c^-_\kappa(h):= +\infty$ for $h \in [0,h_a^-]$.
 It is easy to see that
 $0< h_a^- \leq h_a$ and that $c^-_\kappa(h)\leq c_\kappa(h), h \in [0,h_0^-]$. Here $h_0^-$ satisfies
 $c^-_\kappa(h_0^-) = c_0^{\frak L}(h_0^-)$.
\begin{theorem} \label{TR} Suppose  that  $p/\delta  > e^2$ and $c \in [c_0^{\frak{L}}(h), c^-_\kappa(h)], \ h  \leq h^-_0$.
Then the Nicholson's equation   has a unique (up to a translation)
monotone wavefront.
\end{theorem}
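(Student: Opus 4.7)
The plan is to reduce Theorem \ref{TR} to Theorem \ref{MGth} (and hence to Theorem \ref{ngz}) by passing through an auxiliary Mackey--Glass equation whose nonlinearity simultaneously dominates $g(x)=pxe^{-x}$ on $[0,u_2]$ and is itself dominated by a linear envelope at $u_2$ of slope $\beta_\kappa^-$. For $p/\delta>e^2$ the function $g$ is no longer concave throughout $[0,u_2]$, so its tangent at $u_2$ fails to dominate $g$ and Lemma \ref{tdo} cannot be invoked for the original equation; the chord $\ell(x):=\beta_\kappa^-(x-u_2)+\delta u_2$ is introduced precisely to repair this defect.

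Concretely, I would first build a $C^{1,\gamma}$ function $\tilde g:[0,u_2]\to\R_+$ with $\tilde g(0)=0$, $\tilde g(u_2)=\delta u_2$, $\tilde g'(0)=p$ and $\tilde g'(u_2)=\beta_\kappa^-$, admitting a unique critical point (a strict global maximum) on $(0,u_2)$, and satisfying $g(x)\leq \tilde g(x)\leq \min(px,\,\ell(x))$ for every $x\in[0,u_2]$. Such a $\tilde g$ exists because both $g\leq px$ and $g\leq \ell$ hold on $[0,u_2]$ (the latter by the very definition of $\beta_\kappa^-$ as an infimum of secant slopes), and the gap between $g$ and $\min(px,\ell)$ is strictly positive in the interior; one may take $\tilde g=g$ on a neighbourhood of $0$, $\tilde g=\ell$ on a left-neighbourhood of $u_2$, and a smooth unimodal interpolation in between.

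The auxiliary equation
\[
u_t=\Delta u-\delta u+\tilde g(u(t-h,x))
\]
is of Mackey--Glass type with linearizations at $0$ and $u_2$ of coefficients $(-\delta,p)$ and $(-\delta,\beta_\kappa^-)$, so Lemmas \ref{lc1}--\ref{lc3} give $\tilde c_0^{\frak L}(h)=c_0^{\frak L}(h)$, $\tilde c_\kappa^{\frak L}(h)=c^-_\kappa(h)$ and the corresponding crossing height $h_0^-$. The hypothesis of Lemma \ref{tdo} holds by construction, yielding $\overline{\widetilde{\mathcal D}_{\frak N}}=\widetilde{\mathcal D}_{\frak L}=\{(h,c):c_0^{\frak L}(h)\leq c\leq c^-_\kappa(h),\ h\leq h_0^-\}$; hypothesis $(\mathbf{MG})$ is verified exactly as in Theorem \ref{MGth}, and $(\mathbf M)$ follows from \cite[Theorem 1.1]{TTT} applied to the unimodal $\tilde g$. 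Theorem \ref{ngz} then produces, for every $(h,c)$ in the prescribed range, a monotone wavefront $\tilde\phi(\cdot;h,c)$ of the auxiliary equation with limits $0$ and $u_2$. Since $g\leq\tilde g$,
\[
\tilde\phi''(t)-c\tilde\phi'(t)-\delta\tilde\phi(t)+g(\tilde\phi(t-ch))\;\leq\;\tilde\phi''(t)-c\tilde\phi'(t)-\delta\tilde\phi(t)+\tilde g(\tilde\phi(t-ch))=0,
\]
so $\tilde\phi$ serves as an upper solution to the Nicholson equation. Pairing $\tilde\phi$ with the standard small-exponential lower solution built from the slow eigenvalue $\lambda(c)$ (cf.\ \cite{ma1,TT}), a monotone iteration in the style of \cite{wz,ma1} delivers a monotone front $\phi\leq\tilde\phi$ for the Nicholson equation with the prescribed limits $0$ and $u_2$. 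Uniqueness up to translation then follows from the Diekmann--Kaper-type criterion in \cite{AGT}.

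The hardest step is the construction and matching of $\tilde g$: it must be $C^{1,\gamma}$, strictly unimodal on $[0,u_2]$, dominate $g$, and realise at the endpoints both the value constraints and the prescribed derivatives $p$ at $0$ and $\beta_\kappa^-<\beta_\kappa$ at $u_2$, all while remaining below $\min(px,\ell(x))$. The non-tangential requirement $\tilde g'(u_2)=\beta_\kappa^-$ is precisely what forces the parameter ceiling $h_0^-$ rather than the larger $h_0$ produced by Lemma \ref{lc3} for the true derivative $\beta_\kappa$. Once $\tilde g$ has been built, the remaining monotone-iteration and uniqueness steps are routine, since all the deep functional-analytic work has been packaged into Theorem \ref{ngz}.
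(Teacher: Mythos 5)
Your route through the auxiliary nonlinearity $\tilde g$ is genuinely different from the paper's. The paper applies Theorem~\ref{MGth} to the Nicholson equation \emph{directly} and proves the required inclusion $\Int\,{\frak{D}}^-_{\frak{L}}\subset \overline{\frak{D}}_{\frak{N}}$ in Remark~\ref{ru} by a purely asymptotic argument: the chord slope $\beta_\kappa^-$ is used as the delayed coefficient of a comparison linearization whose inhomogeneous term $Q(t)\geq0$ is nontrivial, and the ordering $\lambda_1\leq\lambda_1^-<\lambda_2^-\leq\lambda_2<0$ of characteristic roots together with \cite[Prop.~7.2]{FA} forces the leading coefficient of the wavefront's expansion at $+\infty$ to be strictly positive, so $\Lambda_+(\phi)=\lambda_2(c)$. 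Your version packages the same chord $\ell(x)=\beta_\kappa^-(x-u_2)+\delta u_2$ into a dominating Mackey--Glass nonlinearity $\tilde g$, lets Lemma~\ref{tdo} do the work of identifying $\overline{\widetilde{\frak{D}}_{\frak N}}$, and then carries the auxiliary front $\tilde\phi$ back to Nicholson by comparison. Incidentally, the construction you call the hardest step is in fact trivial: $\beta_\kappa^-$ is attained at an interior tangency point $x^\#>1$, where $g(x^\#)=\ell(x^\#)$ and $g'(x^\#)=\ell'(x^\#)=\beta_\kappa^-$, so the concatenation $\tilde g:=g$ on $[0,x^\#]$, $\tilde g:=\ell$ on $[x^\#,u_2]$ is already $C^{1,1}$, unimodal, dominates $g$, and lies below $\min(px,\ell(x))$ — no interpolation is needed.

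The final step, however, contains a genuine gap. You claim that pairing $\tilde\phi$ with a small-exponential lower solution and running ``a monotone iteration in the style of \cite{wz,ma1}'' produces a \emph{monotone} front for the Nicholson equation. For $p/\delta>e^2$ the birth function $g(x)=pxe^{-x}$ is not monotone on $[0,u_2]$, the nonlinearity $f(u,v)=-\delta u+pve^{-v}$ fails to be quasi-monotone, and the iteration scheme of \cite{wz} is not available. The non-quasi-monotone variants of the method (\cite{ma1,TT}) replace $g$ by monotone envelopes and pass through Schauder's theorem: they deliver a front squeezed between the sub- and super-solutions, but that front need \emph{not} be monotone. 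Proving monotonicity of Nicholson wavefronts in this regime is exactly the difficulty the present paper is built to address, so it cannot be smuggled in as a by-product of an iteration. The repair is to add the monotonicity criterion of \cite[Theorem~1.1]{TTT} — the same ingredient used to verify $(\mathbf{M})$ in the proof of Theorem~\ref{MGth}: since $(h,c)\in\Int\,{\frak{D}}^-_{\frak{L}}\subset\mathcal{D}_{\frak{L}}$ and the Schauder front takes values in $(0,u_2)$, that theorem supplies its monotonicity. With that citation your alternative route closes, but as written the key monotonicity step is unjustified.
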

\begin{proof} This result follows from Theorem \ref{MGth} if we observe that  $\Int\,{\frak{D}}^-_{\frak{L}}:= \{(h,c): c \in (c_0^{\frak{L}}(h), c^-_\kappa(h)), \ h  \in [0,h^-_0]\} \subset \overline{\frak{D}}_{\frak{N}}$ (this inclusion  is justified in Appendix,  Remark \ref{ru}).  The front uniqueness is due to the relation
$g'(0)= \max_{s\geq 0}|g'(s)|$, e.g. see  \cite{AGT}.
\end{proof}
\section{Associated Fredholm operator}
Let $\phi$ be a monotone solution of equation (\ref{e1})
connecting equilibria $0$ and $\kappa$. The spectra  of the
linearization of (\ref{e1})  at $0,  \kappa$ were analyzed in
Lemmas \ref{lc1}, \ref{lc2}.  In this section, we study  the
linear variational equation along the solution $\phi$
\[
 v''(t)-cv'(t) + f_1(\phi(t),\phi(t-ch))v(t) + f_2(\phi(t),\phi(t-ch))v(t-ch)=0.
\]
With the notation $ a(t):=  f_1(\phi(t),\phi(t-ch)), \ b(t):=
f_2(\phi(t),\phi(t-ch)), $ this equation can be written as the
system
\begin{equation}\label{asy}
 v'(t)  = w(t), \
  w'(t) = -a(t)v(t)+ c w(t) - b(t)v(t-ch),
\end{equation}
or shortly as ${\frak F}_c(v,w)=0$, where \[ {\frak F}_c (v,w)(t)
= (v'(t)-w(t), w'(t)+  a(t)v(t)- c w(t) + b(t)v(t-ch)).
\]
For small $\delta>0$ and fixed $c$, we define the following Banach
spaces:
\[
C_\delta = \{\psi \in C(\R, \R^2): |\psi|_\delta: = \sup_{s \leq
0}e^{-(\lambda(c)- \delta)s} |\psi(s)|+  \sup_{s \geq
0}e^{-(\lambda_{2}(c) + \delta)s} |\psi(s)| < \infty  \},
\]
\[
C^1_\delta = \{\psi \in C_\delta: \psi, \psi' \in C_\delta, \
|\psi|_{1,\delta}: =  |\psi|_\delta +  |\psi' |_\delta < +\infty
\}. \] We will consider ${\frak F}_c$ as a linear operator defined
on $C^1_\delta$ and taking its values in $C_\delta$. The main
result of this section follows:
\begin{theorem}\label{thIF}Let either ({\bf MG}) or ({\bf KPP}) hold with  $\zeta(t) = \phi(t)$. If $(h,c)\in \Int\,\mathcal{D}_{\frak{L}}$ then ${\frak F}_c: C^1_\delta  \to C_\delta$ is a surjective
Fredholm operator, with $\dim Ker \,{\frak F}_c=1$.
\end{theorem}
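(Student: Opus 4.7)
The plan is to prove Theorem \ref{thIF} by combining an asymptotically-autonomous Fredholm theorem of Mallet-Paret/Hale-Lin type (applied in the weighted spaces $C^1_\delta\to C_\delta$) with a one-dimensional kernel identification based on the discrete Lyapunov functional of Mallet-Paret and Sell.

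First I would rewrite ${\frak F}_c(v,w)=0$ as a first-order linear retarded system in $\R^2$ and note that the coefficients $a(t)=f_1(\phi(t),\phi(t-ch))$, $b(t)=f_2(\phi(t),\phi(t-ch))$ converge to $(\alpha_0,\beta_0)$ as $t\to-\infty$ and to $(\alpha_\kappa,\beta_\kappa)$ as $t\to+\infty$. Crucially, hypotheses $({\bf MG})$ and $({\bf KPP})$ guarantee that on $\R$ the coefficient $b(t)$ is either of one sign (KPP) or changes sign exactly once (MG); this is the structural property that later enables a Mallet-Paret–Sell zero-counting functional.

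Second, a smooth substitution $v(t)=\rho(t)\tilde v(t)$ with $\rho(t)=e^{(\lambda(c)-\delta)t}$ for $t\le-T$ and $\rho(t)=e^{(\lambda_2(c)+\delta)t}$ for $t\ge T$ transforms $C^1_\delta$, $C_\delta$ into standard $C^1,C^0$ spaces on $\R$, and conjugates ${\frak F}_c$ into an asymptotically autonomous operator whose limit characteristic equations are $\chi_0(z+\lambda(c)-\delta)=0$ at $-\infty$ and $\chi_\kappa(z+\lambda_2(c)+\delta)=0$ at $+\infty$. For $(h,c)\in\Int\,\mathcal{D}_{\frak L}$ (so $c^{\frak L}_0(h)<c<c^{\frak L}_\kappa(h)$ strictly) and all sufficiently small $\delta>0$, neither limit has a root on the imaginary axis: Lemma \ref{lc1} says the real roots $\lambda(c)<\mu(c)$ of $\chi_0$ are simple and all complex roots satisfy $\Re\lambda_j<\lambda(c)$ by (\ref{por1}), while Lemma \ref{lc2} gives simple $\lambda_1<\lambda_2<0<\lambda_3$ for $\chi_\kappa$ with complex roots of real part $<\lambda_2$. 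Hyperbolicity of both limits, combined with coefficient convergence, is exactly the hypothesis of the Mallet-Paret/Hale-Lin Fredholm alternative, which yields Fredholmness of ${\frak F}_c$. The Fredholm index equals the difference of the unstable dimensions of the two limit equations in the shifted coordinates: two modes ($\lambda(c)$ and $\mu(c)$) at $-\infty$ minus one mode ($\lambda_3$) at $+\infty$, giving an index equal to $1$.

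Third, $\phi'$ is a nontrivial kernel element: differentiating (\ref{e1}) yields ${\frak F}_c(\phi',\phi'')=0$, and the delayed analogue of (\ref{afe}) gives $\phi'(t)\sim\lambda(c)e^{\lambda(c)t}$ as $t\to-\infty$ and $\phi'(t)\sim-\lambda_2(c)e^{\lambda_2(c)t}$ as $t\to+\infty$, so $(\phi',\phi'')\in C^1_\delta$ with a margin of order $\delta$ in the weights.

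The hardest step, and the main obstacle, is to show $\dim\ker\,{\frak F}_c=1$. For a hypothetical second, linearly independent kernel element $v$ one forms $w=v-c_0\phi'$ with a prescribed extra zero at some point and then attaches to $w$ the Mallet-Paret–Sell discrete Lyapunov functional $V$ for the underlying scalar second-order delay equation. The sign structure of $a(t)$ and $b(t)$ imposed by $({\bf MG})$/$({\bf KPP})$ is precisely what makes $V$ well defined and nonincreasing along nontrivial solutions; the membership of $w$ in $C_\delta$, together with the spectral hyperbolicity at $\pm\infty$ coming from the interior condition $(h,c)\in\Int\,\mathcal{D}_{\frak L}$, pins the limit values $V(\pm\infty)$ to concrete integers predetermined by the leading exponential modes $\lambda(c)$ and $\lambda_2(c)$, and these constraints turn out to be incompatible with the created extra zero. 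Once $\dim\ker\,{\frak F}_c=1$ is established, the Fredholm index formula forces the image of ${\frak F}_c$ to have codimension $0$, i.e.\ ${\frak F}_c$ is surjective.
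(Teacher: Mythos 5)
Your proposal and the paper agree on the Fredholm alternative and index computation: both pass through the hyperbolicity of the limit systems (using Lemmas \ref{lc1}, \ref{lc2} to rule out eigenvalues on the shifted imaginary axis), the roughness of the (shifted) exponential dichotomy, and the mode count $2-1=1$. Both also identify $(\phi',\phi'')$ as the explicit kernel element (the paper's text writes $(\phi,\phi')$, which is evidently a misprint since differentiating (\ref{e1}) is what produces a solution of (\ref{asy})).

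Where you diverge is the crux of the theorem. You attempt to show $\dim\ker{\frak F}_c\leq 1$ directly, by applying the Mallet-Paret--Sell functional $V^-$ to a hypothetical second kernel element (and a combination $v-c_0\phi'$ with an engineered zero), and then deduce surjectivity from the index formula. The paper goes the opposite way: it shows surjectivity directly. By the Hale--Lin characterization of the range, $\mathcal{R}({\frak F}_c)=C_\delta$ if and only if no nontrivial solution of the \emph{formal adjoint} system (\ref{asya}) satisfies the two-sided decay bounds (\ref{fae}); after the time reversal $v(t)=y(-t)$, this becomes a nonexistence statement for solutions of (\ref{we22}) with the decay profile (\ref{2inw}), proved in Lemmas \ref{minf}--\ref{lmac}. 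There the functional $V^-$ is applied to the (time-reversed) adjoint, not to the kernel. The two strategies are of course equivalent via the index formula, but they are not interchangeable in practice. In your route you give no argument ruling out small solutions, no compactness/rescaling step comparable to the Arzel\`a--Ascoli reduction in Lemma \ref{lmac}(ii), and no discussion of how the $V^-$-limits are pinned at $\pm\infty$ for a kernel element that by construction decays \emph{strictly faster} than $e^{\lambda(c)t}$ at $-\infty$ (this is a different boundary-value structure from the one in (\ref{2inw}), where only boundedness is imposed at one end). More seriously, the paper treats the ({\bf KPP}) case by an entirely separate Green's-function comparison argument, not by $V^-$: under ({\bf KPP}) the delayed coefficient $B(t)\leq 0$ and the undelayed one is nonnegative, a sign pattern for which the paper does not invoke Proposition \ref{MPP} at all. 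Your sketch claims ``the sign structure of $a(t)$ and $b(t)$ imposed by $({\bf MG})$/$({\bf KPP})$ is precisely what makes $V$ well defined,'' but that is not what happens in ({\bf KPP}); so as written, your kernel-uniqueness step would need substantial new work and is not a verified alternative.
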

We will prove this theorem by using Hale and Lin  analysis
\cite[Lemmas 4.5-4.6]{HL} of the linear functional differential
equations
\begin{equation}\label{hlfde}
y'(t)=L(t)y_t, \quad  y_t(s) := y(t+s),   \quad L(t): C([-ch,0],
\R^n) \to \R^n,
\end{equation}
where linear bounded operators $L(t)$ depend  continuously  on $t
\in \R$ in the operator norm and are uniformly bounded on $\R$.  Let $Y(t,s)$ denote
the evolution (solution) operator for (\ref{hlfde}). Then the
equation is said \cite{HL} to have a shifted  exponential
dichotomy   on a half-line $J$ with the exponents $\alpha < \beta$
and projection $P_u(s), s \in J,$ if \[|Y(t,s)(I-P_u(s))| \leq
Ke^{\alpha(t-s)}, \quad |Y(t,s)P_u(s)| \leq Ke^{\beta(t-s)},
\quad t \geq s \in J.\] Take some $\nu \in (\alpha, \beta)$ and
consider the change of variables $y(t) = x(t)e^{\nu t}$ which
transforms  (\ref{hlfde}) into $x'(t) = M(t)x_t$ with
$M(t)\phi(\cdot) = L(t)(e^{\nu \cdot}\phi(\cdot)) - \nu\phi(0)$ and
the evolution operator $X(t,s) = e^{-\nu(t-s)} e^{-\nu \cdot}
Y(t,s) e^{\nu \cdot} $. It is clear that the transformed equation
has a usual exponential dichotomy with the exponents $\alpha-\nu<0
<\beta -\nu,$ and projection $e^{-\nu \cdot}
P_u(s) e^{\nu \cdot}, s \in J,$
if and only if
the original equation (\ref{hlfde}) has a shifted  exponential
dichotomy  with the exponents $\alpha < \beta$ and 
projection 
$P_u(s), s \in J$.

For convenience of the reader, in Proposition \ref{HaLi}  below we
summarize  the content of the mentioned  lemmas from \cite{HL} for
the special case of system (\ref{asy}) whose formal adjoint equation \cite{Hale} is
given by
\begin{equation}\label{asya}
 y_1'(t)  = a(t)y_2(t)+b(t+ch)y_2(t+ch), \
  y_2'(t) = -y_1(t)-cy_2(t).
\end{equation}
Particular solutions $y=(y_1,y_2)$ of  (\ref{asya}) which are
defined on $\R$ and satisfy
\begin{equation} \label{fae}
|y(t)| \leq Ke^{-\beta_2 t}, \ t \geq 0, \quad |y(t)| \leq
Ke^{-\alpha_1 t}, \ t \leq 0,
\end{equation}
for some $K, \alpha_1, \beta_2$ (specified below) will be of
special importance:
\begin{proposition} \label{HaLi} Suppose that continuous functions $a,b: \R \to \R$ are bounded and, for some $\tau >0$,  system (\ref{asy}) has shifted dichotomies in $(-\infty,-\tau]$ and $[\tau,+\infty)$ with exponents $\alpha_1 = \lambda(c) - \delta < \beta_1$, $\alpha_2 < \lambda_2(c) +\delta < \beta_2$ and projections $P_u^-(t), P_u^+(t)$, respectively.  Then \qquad \qquad ${\frak F}_c: C^1_\delta  \to C_\delta$  is Fredholm of index $
i({\frak F}_c)= \dim {\mathcal R}P_u^-(-\tau) - \dim  {\mathcal
R}P_u^+(\tau), $  and with the range  \[
  {\mathcal R}({\frak F}_c)= \{h \in C_\delta: \int_\R y(s)h(s)ds=0 \ \textrm{for all solutions} \ y(t) \ \textrm{of} \ (\ref{asya}) \ \textrm{satisfying} \ (\ref{fae})\}.
\]
\end{proposition}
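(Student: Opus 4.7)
The proof is a specialization to system (\ref{asy}) of the standard Hale--Lin Fredholm theorems \cite[Lemmas 4.5--4.6]{HL}, translated from the bounded--coefficient, bounded--function setting to the weighted spaces $C_\delta, C^1_\delta$ by the ``unweighting'' change of variables sketched in the paragraph preceding the statement. Since $\lambda(c) - \delta > 0 > \lambda_2(c) + \delta$ for $\delta$ small, elements of $C_\delta$ decay exponentially at both ends. I would choose a $C^\infty$ positive weight $\rho:\R \to (0,\infty)$ equal to $e^{(\lambda(c)-\delta)t}$ on $(-\infty,-\tau]$ and to $e^{(\lambda_2(c)+\delta)t}$ on $[\tau,+\infty)$, with a smooth positive interpolation on $[-\tau,\tau]$. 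The substitution $(v,w) =: \rho\cdot(\tilde v,\tilde w)$ gives linear isomorphisms $C_\delta \to BC(\R,\R^2)$ and $C^1_\delta \to BC^1(\R,\R^2)$ with equivalent norms and conjugates ${\frak F}_c$ to an operator $\tilde{\frak F}_c: BC^1 \to BC$ of the same general form, with the delay coefficient $b(t)$ replaced by $b(t)\rho(t-ch)/\rho(t)$ and additive zero--order terms of type $\rho'(t)/\rho(t)$ coming from the product rule. Because $\rho$ is the exponential of an affine function outside a compact set, these modified coefficients are continuous and uniformly bounded on $\R$, so $\tilde{\frak F}_c$ arises from a linear FDE $x'(t) = \tilde L(t)x_t$ fitting the Hale--Lin framework.

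Next, I would check that the shifted dichotomies of (\ref{asy}) on $(-\infty,-\tau]$ and $[\tau,+\infty)$ pass to standard exponential dichotomies for $\tilde L$ with exponents strictly straddling zero. On the left half--line they become $\alpha_1 - (\lambda(c)-\delta) = 0 < \beta_1 - (\lambda(c)-\delta)$; after shrinking $\delta$ in the weight very slightly (which is allowed since the hypothesis provides a genuine gap $\alpha_1 < \beta_1$) the lower exponent becomes strictly negative, yielding a bona fide standard dichotomy, and similarly on $[\tau,+\infty)$. The unstable projections $\tilde P_u^\pm$ of the transformed system are conjugate to $P_u^\pm$ via multiplication by $\rho$ and hence share their ranks. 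Hale--Lin's Lemmas 4.5--4.6 applied to $\tilde{\frak F}_c$ then yield that it is Fredholm of index $\dim \mathcal{R}\tilde P_u^-(-\tau) - \dim \mathcal{R}\tilde P_u^+(\tau)$, with range equal to the $L^1$--annihilator of the bounded solutions of the formal adjoint of $x'(t) = \tilde L(t)x_t$. Unwinding $\rho$ one last time identifies these bounded adjoint solutions with solutions $y$ of (\ref{asya}) satisfying the exponential bounds (\ref{fae}), and the transformed $L^1$--pairing with the stated pairing $\int_\R y(s)h(s)\, ds = 0$.

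The main obstacle is purely the bookkeeping required for this last identification, namely that the formal adjoint of $\tilde L$ is precisely (\ref{asya}) conjugated by the inverse weight: one must show that the adjoint substitution cancels the primal substitution in the $L^1$--pairing and that the extra $\rho'/\rho$ terms in $\tilde L$ produce no spurious conditions on $y$ beyond (\ref{fae}). A second, essentially mechanical check is the uniform continuity of $\tilde L(t)$ in $t$ in the operator norm on $C([-ch,0],\R^2)$; this follows from the continuity of $a$ and $b$ together with the smoothness of $\rho(\cdot-ch)/\rho(\cdot)$ and its convergence to the constants $e^{-ch(\lambda(c)-\delta)}$ and $e^{-ch(\lambda_2(c)+\delta)}$ as $t \to -\infty$ and $t \to +\infty$, respectively.
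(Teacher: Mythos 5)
Your route --- a smooth global weight $\rho$ interpolating the two exponential rates, followed by conjugation to the unweighted space $BC$ of bounded continuous functions, followed by an appeal to Hale--Lin there --- is a different derivation from what the paper intends: the paper simply invokes \cite[Lemmas 4.5--4.6]{HL}, which are already formulated in exponentially weighted spaces admitting arbitrary (and distinct) rates at $\pm\infty$, so no reduction to $BC$ is needed. The paragraph preceding the proposition in Section~3 only explains, on a single half-line, the equivalence of a shifted dichotomy and a standard one under $y=xe^{\nu t}$; it is a definitional remark, not an invitation to perform the global conjugation you build.

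Within your route there is a genuine gap at ``after shrinking $\delta$ in the weight very slightly \ldots the lower exponent becomes strictly negative.'' The isomorphism $\psi\mapsto\psi/\rho$ from $C_\delta$ onto $BC$ forces $\rho$ to have rate exactly $\lambda(c)-\delta$ on $(-\infty,-\tau]$: if you change that rate to $\lambda(c)-\delta'$ with $\delta'<\delta$, then for a generic $\psi\in C_\delta$ one has $|\psi(s)/\rho(s)|\le Me^{(\delta'-\delta)s}\to\infty$ as $s\to-\infty$, so $\psi/\rho\notin BC$ and the conjugated operator no longer maps $BC^1\to BC$. The parameter you should move is the dichotomy exponent $\alpha_1$, not the weight: since by Lemma~\ref{lc1} every non-real root of (\ref{char1}) has real part strictly less than $\lambda(c)$, the roughness lemma in fact supplies a shifted dichotomy on $(-\infty,-\tau]$ with some $\alpha_1'<\lambda(c)-\delta<\beta_1$, and then your conjugation (with $\rho$ unchanged) does yield a $BC$-dichotomy properly straddling $0$. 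A second loose end concerns the closing identification: boundedness of $\rho y$ at $+\infty$ only gives $|y(t)|=O(e^{-(\lambda_2(c)+\delta)t})$, which is weaker than the bound $e^{-\beta_2 t}$ appearing in (\ref{fae}) and, indeed, is not sufficient for $\int_\R y(s)h(s)\,ds$ to converge for every $h\in C_\delta$; the sharper decay must be deduced from the dichotomy estimates applied to the adjoint flow before the stated range characterization can be read off.
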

Now, since system (\ref{asy}) is asymptotically autonomous and
the eigenvalues $\lambda(c), \lambda_2(c)$ of the limit systems
for (\ref{asy})  at $\pm \infty$ are real and  isolated,  the
roughness property of the exponential dichotomy (cf. \cite[Lemma
4.3]{HL}) implies the following.  For sufficiently large $\tau
>0$,  system (\ref{asy}) has shifted dichotomies in
$(-\infty,\tau]$ and $[\tau,+\infty)$ with exponents,
respectively, \[\alpha_1 = \lambda(c) - \delta >0, \ \beta_1=
\lambda(c) - \delta/2, \ \alpha_2 := \lambda_2(c) + \delta/2 <0, \
\beta_2: = \delta, \]
\[
\mbox{and} \  \dim {\mathcal R}P_u^-(-\tau) = 2, \  \dim
{\mathcal R}P_u^+(\tau) = 1,  \ \textrm{so that } \ i({\frak F})=
1.
\]
Let  $(h,c)\in \Int\,\mathcal{D}_{\frak{L}}$, then, for each wavefront $\phi$, we have $(\phi, \phi') \in C^1_\delta$ (cf. Remark \ref{e1r}) and
${\frak F}_c(\phi, \phi') (t) =0$. As a consequence,  $\dim$
Ker${({\frak F}_c)} \geq 1$. Theorem \ref{thIF} claims that
actually $\dim$ Ker${{\frak F}_c} = 1$ because of  co$\dim{\mathcal
R}{{\frak F}_c} = 0$.  In order to prove that
 ${\mathcal R}({\frak F}_c)= C_\delta$  it suffices to show that none nontrivial solution  of  (\ref{asya}) can satisfy (\ref{fae}).  We establish this fact in the next lemmas.

At this stage, it is worth rewriting (\ref{asya}) and (\ref{fae})
in a more familiar way. First, we observe that (\ref{asya}) 
reduces to the second order equation
\[
y''(t) = -cy'(t) - a(t)y(t)- b(t+ch)y(t+ch).
\]
Next, after the change of variables $v(t)= y(-t), \ t \in \R,$ we
obtain that
\[
v''(t) -c v'(t) +a(-t)v(t) + b(-t+ch)v(t-ch) =0,
\]
while inequalities (\ref{fae}) take the form
\begin{equation}\label{2inw}
|v(t)|+|v'(t)| \leq Ke^{\delta t}, \ t \leq 0, \quad |v(t)| +
|v'(t)|\leq Ke^{( \lambda(c) - \delta) t}, \ t \geq 0.
\end{equation}
Set $A(t) := a(-t), \ B(t):= b(-t+ch)$.  It is clear that $A,B$ are
continuous  with
\[
A(-\infty)= \alpha_\kappa, \ B(-\infty) = \beta_\kappa, \
A(+\infty)= \alpha_0, \ B(+\infty) = \beta_0.
\]

\begin{lemma} \label{minf}  Let $(h,c) \in {\rm Int}\, \mathcal{D}_{\frak L}$. Then there exists a unique (modulo a constant factor) nontrivial solution
$v(t)$ of equation
\begin{equation}\label{we22}
v''(t)-cv'(t) + A(t) v(t) + B(t) v(t-ch)=0,
\end{equation}
such that  $v(t), v'(t) \to 0$ as $t \to -\infty$. Moreover, we
can suppose that $v(t) > 0, $ $\ v'(t) >0$ for all sufficiently
large negative $t$ while $\lim_{t \to -\infty}{v'(t)/v(t)}=
\lambda_3$.
\end{lemma}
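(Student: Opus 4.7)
The plan is to treat equation (\ref{we22}) as an asymptotically autonomous linear delay system as $t\to-\infty$. Writing it as a first order system in $(v,v')$ and noting that $A(-\infty)=\alpha_\kappa$, $B(-\infty)=\beta_\kappa$, the limit equation at $-\infty$ is $v''-cv'+\alpha_\kappa v+\beta_\kappa v(\cdot-ch)=0$, whose characteristic function is precisely $\chi_\kappa$ from Lemma \ref{lc2}. Since $(h,c)\in\mathrm{Int}\,\mathcal D_{\frak L}$ we have $c<c^{\frak L}_\kappa$, so $\chi_\kappa$ has three simple real zeros $\lambda_1\le\lambda_2<0<\lambda_3$ and every remaining (complex) zero $\lambda_j$ satisfies $\Re\lambda_j<\lambda_2$. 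Thus $\lambda_3$ is the unique root in $(0,+\infty)$ and it is separated from the rest of the spectrum by the positive gap $\lambda_3-\lambda_2$. In particular, the subspace of solutions of the limit equation that decay as $t\to-\infty$ is one-dimensional and spanned by $e^{\lambda_3 t}$.

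Fix $\varepsilon>0$ with $\lambda_2+\varepsilon<\lambda_3-\varepsilon$. Arguing as in the passage that precedes Proposition \ref{HaLi} (the roughness argument from \cite[Lemma 4.3]{HL}, applied at the $-\infty$ end and using the exponential convergence $A(t)\to\alpha_\kappa$, $B(t)\to\beta_\kappa$ inherited from the exponential rate $\phi(t)\to\kappa$ as $t\to+\infty$ given by (\ref{afe})), I would produce $\tau>0$ such that on $(-\infty,-\tau]$ equation (\ref{we22}) admits a shifted exponential dichotomy with exponents $\lambda_2+\varepsilon<\lambda_3-\varepsilon$ and with a one-dimensional unstable projection $P^-_u(t)$, whose range is a small perturbation of the $\lambda_3$-eigendirection of the limit system. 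Uniqueness follows immediately: if $v\not\equiv 0$ solves (\ref{we22}) with $v(t),v'(t)\to 0$, then any nonzero component of $(v,v')(t)$ in $\ker P^-_u(t)$ would grow at least like $e^{(\lambda_2+\varepsilon)t}$ as $t\to-\infty$, contradicting decay. Hence $(v,v')(t)\in\mathcal R P^-_u(t)$ on $(-\infty,-\tau]$, and since this range is one-dimensional and a solution of (\ref{we22}) is determined by its restriction to any interval of length $ch$, $v$ is unique up to a multiplicative constant.

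For existence I would set up a contraction mapping argument in the weighted Banach space
\[
X=\Bigl\{v\in C^1((-\infty,-\tau],\R):\ \|v\|:=\sup_{t\le-\tau}e^{-\lambda_3 t}\bigl(|v(t)|+|v'(t)|\bigr)<\infty\Bigr\},
\]
rewriting (\ref{we22}) in the form $L_\kappa v=[\alpha_\kappa-A(t)]v+[\beta_\kappa-B(t)]v(\cdot-ch)$, where $L_\kappa$ is the autonomous limit operator, and inverting $L_\kappa$ via its Green function associated with the spectral splitting at $\lambda_3$. Because $A-\alpha_\kappa$ and $B-\beta_\kappa$ tend to $0$ exponentially as $t\to-\infty$, the right-hand side acts as a contraction on $X$ for $\tau$ large, producing a nontrivial fixed point which extends to a solution of (\ref{we22}) on all of $\R$ by the equation itself. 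The same contraction yields the sharp asymptotics $v(t)=c_0 e^{\lambda_3 t}+o(e^{\lambda_3 t})$, $v'(t)=c_0\lambda_3 e^{\lambda_3 t}+o(e^{\lambda_3 t})$ as $t\to-\infty$ with $c_0\ne 0$; whence $v'(t)/v(t)\to\lambda_3$. Normalising so that $c_0>0$ and recalling $\lambda_3>0$, we obtain $v(t),v'(t)>0$ for all sufficiently large negative $t$.

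The main technical obstacle is producing the shifted exponential dichotomy of the second paragraph with the correct one-dimensional unstable projection. This requires combining the spectral-gap information of Lemma \ref{lc2} (that $\lambda_3$ is the only root of $\chi_\kappa$ to the right of $\lambda_2+\varepsilon$, together with the isolation of the remaining complex spectrum) with a roughness-type perturbation theorem for exponential dichotomies of autonomous linear delay equations; concretely, one converts the shifted dichotomy of the limit system into an ordinary one via the exponential rescaling described after (\ref{hlfde}), invokes the standard roughness result, and transforms back. Once this dichotomy is in place, the uniqueness and the existence/asymptotics/sign steps outlined above are routine.
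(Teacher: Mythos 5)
Your proposal is correct, and the core mechanism is the same one the paper uses: the limiting autonomous equation at $-\infty$ has the spectral picture given by Lemma \ref{lc2} ($\lambda_1\le\lambda_2<0<\lambda_3$ real, all other roots with real part strictly less than $\lambda_2$), so it has an exponential dichotomy with a one-dimensional unstable subspace spanned by $(e^{\lambda_3 s},\lambda_3 e^{\lambda_3 s})$, and roughness \cite[Lemma 4.3]{HL} transfers this to the asymptotically autonomous system (\ref{we22p}) on $(-\infty,-\tau]$. Your uniqueness argument coincides with the paper's. Where you diverge is the existence/positivity step: you set up a separate contraction-mapping construction in a $\lambda_3$-weighted $C^1$ space to produce a solution together with the sharp expansion $v(t)=c_0e^{\lambda_3 t}(1+o(1))$, and derive the sign and the ratio $v'/v\to\lambda_3$ from it. The paper avoids this extra machinery: once roughness gives a one-dimensional unstable projection $P(t)$ with $P(t)\to P$, it sets $(v_t,w_t)=P(t)(v,w)$ for the fixed positive generator $(v,w)$ of $\mathcal R P$, observes that $(v_t,w_t)\to(v,w)$ uniformly on $[-ch,0]$, so $(v_t,w_t)$ is componentwise positive for large negative $t$; every solution bounded on $\R_-$ is then a nonvanishing scalar multiple $\lambda(t)(v_t,w_t)$, which gives sign preservation and $v'(t)/v(t)=w_t(0)/v_t(0)\to w(0)/v(0)=\lambda_3$ directly. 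Your route buys the full leading-order asymptotics (more than the lemma needs); the paper's buys brevity and stays at the level of projections. Two minor cautions in your write-up: first, you cite (\ref{afe}) for the exponential convergence $\phi(t)\to\kappa$, but that formula is stated only for $h=0$; for $(h,c)\in\Int\,\mathcal D_{\frak L}$ the relevant decay estimate is Remark \ref{e1r} (based on Lemma \ref{tdo}). Second, when you speak of "inverting $L_\kappa$" on the weighted space $X$, note that $L_\kappa$ has a one-dimensional kernel in $X$ spanned by $e^{\lambda_3 t}$, so you must split off this direction and invert on a complement (equivalently, use the ansatz $v=c_0e^{\lambda_3 t}+r$ with $r$ of strictly higher decay order); otherwise the Green's-function inversion is not well defined.
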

\begin{proof} Setting $C_2:= C([-ch,0], \R^2)$, we can present (\ref{we22})  as the  system
\begin{equation}\label{we22p}
\hspace{-2mm} v'(t) = w(t), \ w'(t) = cw(t)-A(t)v(t)-B(t)v(t-ch).
\end{equation}
Since $(h,c) \in {\rm Int}\, \mathcal{D}_{\frak L}$,   the limit  system  of (\ref{we22p}) at  $-\infty$
 is exponentially dichotomic with some projection
$P$. In fact, it possesses  one-dimensional unstable invariant
submanifold of $C_2$ generated by the element $(v,w)(s)=
(e^{\lambda_3 s}, \lambda_3 e^{\lambda_3 s}),$ $s \in [-ch,0]$.
Thus $P(v,w) = (v,w)$. Using the roughness property \cite[Lemma 4.3]{HL} of the exponential
dichotomy, we obtain that the
perturbed system (\ref{we22p}) is also dichotomic on some
interval $(-\infty, -\tau] \subset \R_-$ with the projection $P(t)$
such that $P(t) \to P, \ t \to - \infty$. Set $(v_t,w_t)=
P(t)(v,w)$, then $P(t)(v_t,w_t)= (v_t,w_t)$ and
\[
|(v,w)-(v_t,w_t)|_{C_2} = |(P(t)-P)(v,w)|_{C_2} \to 0, \ t \to -
\infty.
\]
As a consequence, $v_t(s)> 0, w_t(s) >0, \ s \in [-ch,0],$ for
all sufficiently large negative $t\leq -\tau_1 \leq -\tau$. Next,
it is clear that every bounded on $\R_-$ solution $(v(t),v'(t))$
of (\ref{we22p}) can be written as
\[
(v(t+s),v'(t+s)) = \lambda(t) (v_t(s), w_t(s)), \ t \leq -\tau_1,
\ s \in [-ch,0],
\]
for some continuous scalar function $\lambda:(-\infty,-\tau_1] \to
\R$. It is easy to see from (\ref{we22p}) that $\lambda(t_0) =0$
for some $t_0 \leq -\tau_1$ if and only if $\lambda(t) = 0, t \leq
-\tau_1$. Therefore components of each bounded solution
$(v(t),v'(t))$ of (\ref{we22p}) keep their sign on $(-\infty,
-\tau_1]$. Finally, we have that
$
\lim_{t \to -\infty}v'(t)/v(t)= \lim_{t \to
-\infty}w_t(0)/v_t(0)= w(0)/v(0) =\lambda_3.
$
\end{proof}
\begin{lemma} \label{lmac} Assume that  either hypothesis $({\bf MG})$ or  $({\bf KPP})$ is satisfied.   Let   $ (h,c) \in {\rm Int}\, \mathcal{D}_{\frak L} $ and $A(t) = \alpha_0 + O(e^{-\gamma t}), \  B(t)= \beta_0+ O(e^{-\gamma t}), \ t \to +\infty, $
for some  $\gamma >0$. Then only the trivial solution $v(t) \equiv
0$ of  equation (\ref{we22}) can satisfy inequalities (\ref{2inw}). 
\end{lemma}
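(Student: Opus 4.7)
The plan is a proof by contradiction combining Lemma \ref{minf} for the behaviour at $-\infty$, a Levinson-type asymptotic expansion at $+\infty$ based on Lemma \ref{lc1}, and the discrete Lyapunov functional of Mallet-Paret and Sell.

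Suppose that $v\not\equiv 0$ satisfies both (\ref{we22}) and (\ref{2inw}). The bound (\ref{2inw}) at $-\infty$ forces $v,v'\to 0$ exponentially, so by Lemma \ref{minf} (after a scalar normalization) we have $v(t)>0$, $v'(t)>0$ on some half-line $(-\infty,-T]$ with $v(t)\sim Ce^{\lambda_3 t}$. Consequently, the number of sign changes of $v$ restricted to $[t-ch,t]$, denoted $N(v_t)$, vanishes for all $t\leq -T-ch$.

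For the behaviour at $+\infty$, the exponential convergence $A(t)\to\alpha_0$, $B(t)\to\beta_0$ allows one to apply classical Levinson-type asymptotic theory for linear delay equations: every solution of (\ref{we22}) is asymptotic at $+\infty$ to a quasi-polynomial mode $e^{\nu t}p(t)$ with $\nu$ a characteristic root of $\chi_0$. The growth bound $|v(t)|\leq Ke^{(\lambda(c)-\delta)t}$ eliminates the contributions of the dominant real roots $\lambda(c)$ and $\mu(c)$, so the leading exponent of $v$ is a complex root $\nu$ of $\chi_0$ with $\Re\nu\leq\lambda(c)-\delta$. By the last statement of Lemma \ref{lc1}, such a root must satisfy $|\Im\nu|>\pi/(ch)$, whence $v$ oscillates at $+\infty$ with consecutive zeros asymptotically separated by less than $ch$, so $N(v_t)\geq 2$ for all $t$ sufficiently large. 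In the \textbf{(KPP)} case $\beta_0=0$ and $\chi_0(z)=z^2-cz+\alpha_0$ has only the two real roots $\lambda<\mu$: the growth bound then forces $v$ to decay faster than every exponential at $+\infty$ via a bootstrap based on the variation-of-parameters formula for the ODE limit and the $O(e^{-\gamma t})$ smallness of $B$, and a backward unique-continuation argument along (\ref{we22}) closes that case immediately, giving $v\equiv 0$.

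In the \textbf{(MG)} case, the discrete Lyapunov functional of Mallet-Paret and Sell applied to the planar first-order form $v'=w$, $w'=cw-A(t)v-B(t)v(t-ch)$ of (\ref{we22}) supplies the contradiction: under the sign structure of $A,B$ furnished by hypothesis \textbf{(MG)}, the functional $N(v_t)$ is non-increasing in $t$ along solutions, which contradicts $N(v_t)=0$ for $t\leq -T-ch$ and $N(v_t)\geq 2$ for $t$ sufficiently large. Hence $v\equiv 0$. The main obstacle is to verify in detail that the MPS monotonicity really applies to (\ref{we22}) under \textbf{(MG)}, where $B(t)$ changes sign exactly once; the argument must stitch the MPS sign-change count across this isolated zero of $B$, either by a piecewise sign change of the dependent variable or by direct inspection on each of the two half-lines where $B$ has constant sign. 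On the (KPP) side, the delicate step is instead pushing the super-exponential decay bootstrap all the way down to a unique-continuation statement valid for the delay equation (\ref{we22}).
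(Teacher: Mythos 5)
Your overall strategy — Lemma \ref{minf} at $-\infty$, a characteristic-root asymptotic decomposition at $+\infty$, and the Mallet--Paret--Sell discrete Lyapunov functional for \textbf{(MG)} — is indeed the paper's strategy, but there are three genuine gaps.

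First, the assertion that ``every solution of (\ref{we22}) is asymptotic at $+\infty$ to a quasi-polynomial mode $e^{\nu t}p(t)$'' is false as stated: in delay equations small solutions (those decaying super-exponentially) exist and are invisible to the Mallet-Paret/Levinson decomposition, which only yields a finite sum of modes up to a super-exponentially small remainder. The paper therefore has to split into two scenarios. When $v$ is \emph{not} small, (\ref{rev}) holds with $|y_j|>\pi/(ch)$ and the $V^-$ contradiction is reached exactly as you describe. When $v$ \emph{is} small, the paper gives a substantial separate argument: eventually sign-definite small solutions are ruled out via a Henry-type lemma from \cite{HVL}, and oscillating small solutions are excluded by a rescaling/Arzel\`a--Ascoli limit that produces a bounded, non-small solution $w_*$ of the autonomous limit equation to which $V^-$ is then applied. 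Your proposal omits this entire branch.

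Second, you correctly flag that $B(t)$ changes sign in the \textbf{(MG)} case, so the MPS feedback conditions do not hold on all of $\R$, but you do not resolve it. The paper's resolution is concrete and essential: because $v(\sigma)>0$, $v(\sigma-ch)>0$, $v'(\sigma)=0$, $v''(\sigma)\leq 0$, plugging into (\ref{we22}) forces $B(\sigma)\geq 0$, hence $\sigma\geq t_*$, hence $B>0$ and $A\leq 0$ on $[\sigma,\infty)$, which is exactly the half-line on which $V^-(v_t)$ is run starting from $V^-(v_\sigma)=1$. Without this step the monotonicity of $V^-$ cannot be invoked.

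Third, the \textbf{(KPP)} argument goes in the wrong direction. The bound (\ref{2inw}) at $+\infty$ is $|v(t)|\leq Ke^{(\lambda(c)-\delta)t}$ with $\lambda(c)-\delta>0$: it permits slow exponential \emph{growth}, not decay, so there is no ``super-exponential decay'' to bootstrap. The paper's argument is the opposite of yours: writing $v''-cv'+\alpha_0 v=D(t)$ with $D\geq 0$ on the interval where $v>0$ and using variation of constants, one finds $v(t)=C_1e^{\lambda t}+C_2e^{\mu t}+(\text{nonnegative integral term})$ with $C_2>0$ (because $v'(0)/v(0)\approx\lambda_3>\mu$), hence $v(t)>\tfrac12 C_2 e^{\mu t}$ for large $t$, contradicting $v=O(e^{(\lambda-\delta)t})$ since $\mu>\lambda>\lambda-\delta$. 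No unique-continuation step is needed, and the bootstrap you sketch would not converge because the homogeneous coefficients $a_1,a_2$ in each iteration are pinned by the data and do not vanish.
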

\begin{proof} Assume, on the contrary, that there is a nontrivial $v(t)$ satisfying  (\ref{2inw}),  (\ref{we22}).    By Lemma \ref{minf}, we can suppose that $v(t),v'(t)>0$  on some maximal open interval $(-\infty, \sigma)$ and $v'(\sigma)=0$ (whenever $\sigma$ is finite).

{In the first part of the proof, we will assume additionally that hypothesis {\bf (MG)}} is satisfied.  Then the open set $Z_v:= \{t \in \R: v(t)\not=0\}$ is dense in $\R$.  Indeed, otherwise $v(t) \equiv 0$ on some non-degenerate interval $[r_1,r_2]$ so that, in virtue of
equation  (\ref{we22}),  $v(t) \equiv 0$ for  $t \in
[r_1-chj,r_2-chj], \ j\in \N$. This, however, contradicts to the inequality  $v(t)
>0,$ $t \leq \sigma$. Now, if $v(t)$ is not a small solution
(the latter means that $\lim_{t\to+\infty}v(t)e^{st}=0$ for every
$s\in \R$), we obtain from
 \cite[Proposition 7.2]{FA} and Lemma \ref{lc1} that
\begin{equation}\label{rev}
v(t)= Ce^{x_j t}(\cos(y_j t +\varphi_j)+ o(1)), \quad t \to
+\infty,
\end{equation}
for some  $C >0, \ \varphi_j \in \R, $ and complex $\lambda_j:=
x_j + iy_j, \ |y_j| > \pi/ch,\ x_j < \lambda(c),$ satisfying
(\ref{char1}). Therefore $v(t)$ oscillates on $\R_+$ and  $\sigma$
is finite.   Let $t_*$ denote the unique
zero of $B(t)$ on $\R$. Since $v''(\sigma) \leq 0,$ $ v'(\sigma) =0, v(\sigma)
>0, v(\sigma-ch)>0$, we obtain that $\sigma \geq t_*$ because of
\[
0=v''(\sigma)-cv'(\sigma) + A(\sigma) v(\sigma) + B(\sigma)
v(\sigma-ch) \leq B(\sigma) v(\sigma-ch).
\]
Hence $B(t) > 0,\ A(t) \leq 0$ on $(\sigma, +\infty)$ and
therefore the nonlinearity
$$
(N_0,N_1):= (w(t), cw(t)- A(t)v(t)-B(t)v(t-ch))
$$
satisfies the following feedback inequalities (with $\delta^* =-1$,
see \cite{MPS}) for $t \geq \sigma$:
\begin{equation}\label{fis}
\left\{
\begin{array}{ccc}
N_0(t,0,w)= w \geq 0 & {\rm if \ and \ only \ if \ } w \geq 0, \
  \\
N_1(t,v,0,v_t)=  - A(t)v-B(t)v_t \geq 0 & {\rm \ if \ } v \geq 0 \
{\rm and \ } \ \delta^* v_t \geq 0,\\
N_1(t,v,0,v_t)=  - A(t)v-B(t)v_t \leq 0&  {\rm \ if \ } v \leq 0  \
{\rm and \ } \ \delta^* v_t \leq 0.
\end{array}
\right.
\end{equation}
In the next stage of the proof, we make use of the discrete Lyapunov
functional $V^-(\phi)$  introduced by J. Mallet-Paret and G. Sell in \cite{MPS}.  
For the convenience of the reader,  below we adopt to our situation the definition of $V^-$  and a key result from 
\cite{MPS}  describing the monotonicity properties of $V^-(v_t), \ t
\geq \sigma$.   Let us introduce a new notation: $\mathbb{K} = [-h,0] \cup \{1\}$.
\begin{definition}  For any
$v \in C(\mathbb{K})\setminus\{0\}$ we define the number of sign
changes by $$\hspace{-1mm} {\rm sc}(v) = \sup\{k \geq 1:{\rm \it
there \ are \ } t_0 <
 \dots < t_k, \ t_j \in \K,  \ {\rm \it such \ that\ }
v(t_{i-1})v(t_{i}) <0 {\rm \ for \ }  i\geq 1\}. $$ We set ${\rm
sc}(v) =0$ if $v(s) \geq 0$ or  $v(s) \leq 0$ for $s \in
\mathbb{K}$. If $\varphi \in C^1[-ch, 0]$ is not identically zero,  we write $(\bar \varphi )(s) = \varphi(s)$ if 
$s \in [-ch,0]$, and $(\bar \varphi)(1) = \varphi'(0)$.  Then the  Lyapunov  functional $V^-:C^1[-ch,0]\setminus\{0\} \to \{1,3,5, \dots\}$ is defined by the relations: $V^-(\phi) = \mbox{sc}(\bar\phi)$ if $\mbox{sc}(\bar\phi)$ is odd or infinite;  $V^-(\phi) = \mbox{sc}(\bar\phi)+1$ if $\mbox{sc}(\bar\phi)$ is even. 
\end{definition}
\begin{proposition}(By \cite[Theorem 2.1]{MPS}). \label{MPP}Assume that the feedback inequalities (\ref{fis}) hold for $t \geq \sigma$. Let $v:[\sigma -ch, +\infty) \to \R$ be a nontrivial $C^1$-solution of equation  (\ref{we22}), and set $v_t(s):= v(t+s), $ $  s \in [-ch,0]$. 
Then the discrete
Lyapunov functional  $V^-(v_t)$ is a nonincreasing function of $t \geq \sigma$ as long as $v_t$ is not the zero function. 
\end{proposition}
Since $V^-(v_\sigma) =1$, Proposition \ref{MPP} assures that  $V^-(v_t)
=1$ for $t \geq \sigma$.  On the other hand, in view of  $|y_j| >
\pi/ch$ and representation (\ref{rev}), we find that $V^-(v_t)
\geq 3$ for all large positive $t$.  This contradiction
shows that  $v(t)$ must be a small solution.
We will analyze the following two alternative cases:

$i)$  $v(t) \geq 0$ for all $t$ from some maximal subinterval
$[\hat t, \infty) \subseteq [\sigma, \infty)$. Since \[-A(t) =
|A(t)| \leq \frak{b}_0:= \max_{t \geq \sigma}  |A(t)|, \quad  -
B(t) \leq \frak{b}_1=  0,\ t \geq \sigma, \] we can apply
\cite[Lemma 3.1.1]{HVL}, under Assumption 3.1.2 with $\gamma=-1$,
to conclude that $v\equiv 0$ on some interval $[t_\#,
\infty)\subset \R\setminus Z_v$, a contradiction.

$ii)$ $v(t)$ is oscillating on $[\sigma, \infty)$. Since we know that $V^-(v_t)
=1$ for $t \geq \sigma$,   the number of   sign changes
of $v_t$ on $[t-ch,t]$ is less than 1. This implies  the existence of
an infinite sequence $\{t_j\}_{j \geq 0}, \ t_{j+1}-t_j \geq ch$,
such that $v(t_j)=0$ and $v(t) > 0$ [respectively, $v(t) < 0$]
almost everywhere on each $(t_{2j},t_{2j+1})$ [respectively,
$(t_{2j+1},t_{2j+2})$].  Next,  the property $V^-(v_t)
=1,  \ t \geq \sigma,$ yields  additionally that
$v'(t) \geq 0$ a.e.  on $(t_{2j},t_{2j}+ch)$.  In consequence, 
\[
v''(t) = cv'(t) +|A(t)|v(t)+ B(t)|v(t-ch)|\geq 0 \ \mbox{a.e. on}
\  [t_{2j},t_{2j}+ch].
\]
Therefore $v'(t), v(t) > 0$ for all $t \in (t_{2j},t_{2j}+ch]$.
This shows that, in fact, $t_{2j+1}-t_{2j} > ch$ and there is a
rightmost $s_j \in (t_{2j}+ch,t_{2j+1})$ such that $v(s_j)=
\max_{u \in [t_{2j},t_{2j+1}]} v(u)$.  Since $v(+\infty)=0$, without
restricting the generality, in the sequel we can assume that
$t_{2j}, s_j$ are choosen in such a way that $0< v(s_j) \geq
|v(t)|, \ t \geq t_{2j}$ (otherwise, it suffices to consider
$ -v(t)$). 

Hence,  $
\max_{u\geq s_{j}-ch}|v(u)| \leq v(s_j), $  and   for every fixed $T \geq 0$ and $t \in [s_{j}-ch,s_j+T]$, it
holds
\begin{eqnarray}
\hspace{-2mm}&&|v'(t)| \leq v'(t_{2j}+ch)+ \max_{u \in [q_{j}, s_j+T]}|v'(u)| \leq |\int_{t_{2j}+ch}^{s_j}e^{c(t_{2j}+ch-s)}(A(s)v(s)+B(s)v(s-ch))ds| +\nonumber \\
\hspace{-2mm}&&  \max_{t \in [q_{j}, s_j+T]}  |\int_{t}^{s_j}e^{c(t-s)}(A(s)v(s)+B(s)v(s-ch))ds| <
4\frac{|\alpha_0|+\beta_0}{c}e^{cT}v(s_j), \nonumber
\end{eqnarray}
where $q_j: = \max\{s_{j}-ch, t_{2j}+ch\}$. Therefore, if we set $w_j(t):= v(t+s_j-ch)/v(s_j)$, we have that 
 $|w_j(t)| \leq 1,$ $  t
\geq 0,\ w_j(ch)=1, w'_j(ch) =0$, and, for every fixed $T >0$, 
\[
|w_j'(s)|\leq 4\frac{|\alpha_0|+\beta_0}{c}e^{cT}, \ s \in [0,T]. 
\]
As a consequence, after an application of the Arzela-Ascoli
theorem, we obtain that $w_j$ has a subsequence (we will use the
same notation $w_j$ for it) such that $w'_j(ch) = 0, \  \lim
w_j(t) = w_*(t), \ t \in \R_+,$ where the convergence is uniform on
compact subsets of $\R_+$. It is clear that continuous $w_*$
is bounded: $1 = \max_{t \geq 0} w_*(t)  = 
w_*(ch).$ Note that $w_j(t)$ satisfies
\[
w''(t)-cw'(t) + A_j(t) w(t) + B_j(t) w(t-ch)=0, \ t \in \R,
\]
where $A_j(t):= A(t+s_{j}-ch) \to \alpha_0, \  B_j(t) := B(t+s_{j}-ch)
\to \beta_0$ uniformly on $\R_+$. Thus
\[
w_j'(t) = w_j'(ch) + c(w_j(t)-w_j(ch)) - \int_{ch}^t(A_j(s) w_j(s)
+ B_j(s) w_j(s-ch))ds
\]
converges (uniformly on compact subsets of $[ch, +\infty)$) to
$w'_*(t)$ and
\[
w'_*(t)=  c(w_*(t)-w_*(ch)) - \int_{ch}^t(\alpha_0
w_*(s) + \beta_0 w_*(s-ch))ds,
 \ t \geq ch.
\]
Thus $w_*(t)$ is a bounded solution of the linear delay
differential equation  (\ref{ve1}, $j=0$) considered for $t \geq
ch$, with non-negative initial value $w_*(s), \ s
\in [0,ch],$ and $w_*'(ch)= 0,$  $w_*(ch)= 1$. In view of (\ref{ve1}, $j=0$), this implies that
$w_*(t)\not\equiv 0$ on every subinterval $[p, +\infty), \ p \geq ch$.  By
\cite[Theorem 3.1, p. 76]{Hale} the latter assures that $w_*(t)$
is not a small solution of  (\ref{ve1}, $j=0$).  Moreover, since
(\ref{ve1}, $j=0$) satisfies the feedback assumptions  similar to (\ref{fis}) 
and $V^-(w_{*ch})=1$, Proposition \ref{MPP} implies 
$V^-(w_{*t}) =1$ for $t \geq ch$.  However, invoking again representation (\ref{rev}),  
we find that $V^-(w_{*t}) \geq 3$ for all large positive $t$, a contradiction.

{Assume now condition ({\bf KPP})}. By Lemma \ref{minf},
without restricting the generality, we can suppose that $0 \in
(-\infty, \sigma)$ and  $v'(0)/v(0) \approx \lambda_3$. Let
$(-\infty,\sigma_*)$ denote the maximal open interval where
$v(t)>0$ (it is clear that  $\sigma_* \geq  \sigma$). Observe  that
\[
v''(t)-cv'(t) + \alpha_0 v(t) = D(t), \ {\rm where \ } D(t):=
(\alpha_0-A(t))v(t)-B(t)v(t-ch) \geq 0,\] $ t < \sigma_*.$
Integrating the latter equation, we find that
\[
v(t) = C_1e^{\lambda t}+ C_2e^{\mu t}+
\frac{1}{\mu-\lambda}\int_0^t\left(e^{\mu(t-s)}-
e^{\lambda(t-s)}\right)D(s)ds,
\]
\[
\mbox{where} \quad C_1:= v(0)\frac{\mu -
v'(0)/v(0)}{\mu-\lambda}<0, \quad C_2:=
v(0)\frac{v'(0)/v(0)-\lambda}{\mu-\lambda}>0,
\]
and $0 < \lambda< \mu$ satisfy  $z^2-cz+\alpha_0 =0$. We note here
that a direct comparation of the latter equation with $z^2-cz+
\beta_\kappa e^{-zch} =0$ shows that $\lambda< \mu< \lambda_3$.
This also implies  that $c(t):= C_1e^{\lambda t}+ C_2e^{\mu t}> 0$
for $t >0$.
 Indeed, $c(t)$ is positive for sufficiently large $t$ and if $C_1e^{\lambda T}+ C_2e^{\mu T} =0$ for the rightmost $T$, then
\[
e^{(\mu-\lambda)T}= \frac{v'(0)-\mu v(0)}{v'(0)-\lambda
v(0)}\approx \frac{\lambda_3-\mu}{\lambda_3-\lambda} <1 \quad
\mbox{so that} \ T <0.\]
 All the above imply that 
$\sigma_* = +\infty$ and $v(t) > 0.5C_2e^{\mu t}$ for sufficiently
large $t$, contradicting to the second inequality of (\ref{2inw}). 
\end{proof}
\section{Global continuation of wavefronts}
This section contains the proof of Theorem \ref{ngz}.  It is divided into  three parts.
\subsection{Lyapunov-Schmidt reduction.}
Take a fixed  $(h_0,c_0) \in  \Int\,\mathcal{D}_{\frak L}$ and suppose
that there exists a monotone wavefront $u = \phi(\nu \cdot x+ct),
\ |\nu| =1$, for equation (\ref{pe}) considered with $h=h_0$, and
propagating at the velocity $c=c_0$. Then $\phi$ satisfies
(\ref{e1}) or, equivalently,  $(v,w)= (\phi(t), \phi'(t))$
is a solution of 
\begin{equation}\label{SEO}
 v'(t)=w(t), \quad  w'(t)=c
w(t)-f(v(t),v(t-r)).
\end{equation}
with $c=c_0,  r = c_0h_0=:r_0$.  In what follows,  the spaces
$C_\delta, C_\delta^1$ will be also considered with the fixed
parameters  $c=c_0,  h=h_0$. The change of  variables
$z_1+\phi(t)=v,  z_2+\phi'(t)=w$   transforms (\ref{SEO}) into \[
\frak{F}_{c_0}(z)=G(h,c,z),
 \]
 where we use the notation $z(t)=(z_1(t),z_2(t))$, $z_{jr}(t)=z_j(t-r)$,
\[G(h,c,z)= (0,  (c-c_0)\phi'+(c-c_0)z_2
                                                        +f(\phi,\phi_{r_0})
-f(z_1+\phi,z_{1r}+\phi_r)+a(\cdot)z_1+b(\cdot)z_{1r_0}).
\]
 By Theorem \ref{thIF},  there
exists a subspace $W\subset C^1_{\delta}$, $\codim(W)=1$, such
that $C_{\delta}^1=\ker(\frak{F}_{c_0})\bigoplus W$. Clearly,  the
restriction \[L:=\frak{F}_{c_0}\Big|_{W}:W\rightarrow C_{\delta}\]
is continuous one-to-one operator,  hence $L^{-1}$ exists and is
bounded.

Set $W_{\rho} : = W \cap \{z\in C^1_\delta: |z|_{1,\delta} <
\rho\}$. We have the following
\begin{lemma}\label{lip}
There exist $\rho_1, \rho_2, K >0$  such that
\begin{enumerate}
\item[(i)] $|G(h,c,z)-G(h,c,w)|_\delta \leq K |z-w|_\delta$  for
all $z, w \in \mathcal{U}_{\rho_1}(0)=\{z: |z|_\delta < \rho_1\}$
and $(h,c) \in \mathcal{U}_{\rho_2}(h_0,c_0)  = \{(h,c): |h-h_0|+
|c-c_0| < \rho_2\}$. \item[(ii)] $L^{-1}G(h,c,\cdot):W_{\rho_1}
\to W_{\rho_1}$ is well defined and is a contraction uniformly in
$(h,c) \in \mathcal{U}_{\rho_2}(h_0,c_0)$. \end{enumerate}
\end{lemma}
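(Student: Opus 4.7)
The plan is to establish (i) via the mean value theorem combined with boundedness of the translation operator on $C_\delta$, and to derive (ii) by combining (i) with a size estimate on $G$ that exploits the construction of the linear part $a(\cdot)z_1+b(\cdot)z_{1r_0}$ as a cancellation of $D_zf$ at $z=0$. A direct computation gives $G(h_0,c_0,0)=0$ and $D_zG(h_0,c_0,0)=0$, so $G$ is small of higher order at $(0,h_0,c_0)$. Since $L:W\to C_\delta$ is a continuous bijection between Banach spaces, $L^{-1}$ is bounded by the open mapping theorem, and it suffices to make the $C_\delta$-norm and the $C_\delta$-Lipschitz constant of $G(h,c,\cdot)$ on $W_{\rho_1}$ small enough.

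For (i), I would write
\[
G(h,c,z)-G(h,c,w)=\bigl(0,(c-c_0)(z_2-w_2)+\Psi(\cdot)+a(\cdot)(z_1-w_1)+b(\cdot)(z_{1r_0}-w_{1r_0})\bigr),
\]
with $\Psi(t):=f(w_1(t)+\phi(t),w_1(t-r)+\phi(t-r))-f(z_1(t)+\phi(t),z_1(t-r)+\phi(t-r))$, apply the mean value theorem to obtain $|\Psi(t)|\leq M(|z_1-w_1|(t)+|z_1-w_1|(t-r))$ with $M:=\sup|\nabla f|$ over a neighbourhood of $[0,\kappa]^2$, and use that $\psi\mapsto\psi(\cdot-r)$ acts boundedly on $C_\delta$ uniformly in $(h,c)$ near $(h_0,c_0)$ (with norm depending only on $\lambda(c_0),\lambda_2(c_0),r_0$). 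For (ii), I would expand $f(u,v)=f(\phi,\phi_{r_0})+a(\cdot)(u-\phi)+b(\cdot)(v-\phi_{r_0})+R(u,v)$ with $|R(u,v)|\leq M'(|u-\phi|^{1+\gamma}+|v-\phi_{r_0}|^{1+\gamma})$ obtained from the Hölder continuity of $\nabla f$; substituting $u=z_1+\phi$, $v=z_{1r}+\phi_r$ and exploiting $|\phi_r-\phi_{r_0}|_\delta\leq C|r-r_0|$ (valid since $\phi'\in C_\delta$ by (\ref{afe})), a short computation yields
\[
|G(h,c,z)|_\delta\leq C_0(|h-h_0|+|c-c_0|)(1+|z|_{1,\delta})+C_1|z|_{1,\delta}^{1+\gamma}.
\]
Choosing $\rho_1$ with $\|L^{-1}\|C_1\rho_1^\gamma<1/4$ and then $\rho_2$ with $\|L^{-1}\|C_0(1+\rho_1)\rho_2<\rho_1/2$, the map $L^{-1}G(h,c,\cdot)$ sends $W_{\rho_1}$ into $W_{\rho_1}$; the analogous Hölder Lipschitz bound $|R(u,v)-R(\tilde u,\tilde v)|\leq M''(|z|_{1,\delta}^\gamma+|w|_{1,\delta}^\gamma+|r-r_0|^\gamma)|z-w|_{1,\delta}$ combined with (i) yields a contraction constant below $1/2$ after possibly further shrinking $\rho_1,\rho_2$.

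The main technical obstacle I anticipate is the careful control of the translations $z\mapsto z(\cdot-r)$ and of the differences $z(\cdot-r)-z(\cdot-r_0)$ under the asymmetric weighted norm (with distinct exponents at $\pm\infty$). Splitting $\R$ into the two weight regions, the translation acts boundedly on $C_\delta$ with norm depending only on $\lambda(c_0),\lambda_2(c_0),r_0$, while the difference operator has $C^1_\delta$-to-$C_\delta$ norm of order $|r-r_0|$, which is exactly the regularity available for $z\in W\subset C^1_\delta$.
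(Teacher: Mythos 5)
Your proposal is correct and takes a genuinely different route from the paper. For part (i) you and the paper both apply the mean value theorem, but the paper sets things up so that the Lipschitz constant is \emph{arbitrarily small}: it writes the difference through $f_j(R(s_0))-f_j(\phi,\phi_{r_0})$ and then invokes uniform continuity of $f_1,f_2$ on a neighbourhood of $[0,\kappa]^2$ to bound these terms by an arbitrary $\sigma>0$ once $\rho_1,\rho_2$ are small enough. The contraction in (ii) then drops out immediately after fixing $\sigma<(2\Theta\|L^{-1}\|)^{-1}$; nothing beyond continuity of $\nabla f$ is needed. Your argument instead accepts a large constant $M=\sup|\nabla f|$ in (i) and recovers the needed smallness in (ii) from the $C^{1,\gamma}$ Hölder remainder $R(u,v)$ and its Hölder--Lipschitz estimate, which yields an explicit contraction rate of order $\rho_1^\gamma+\rho_2^\gamma+\rho_2$. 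This is more quantitative but relies on Hölder regularity of $\nabla f$ more heavily than the paper's proof of this particular lemma does. One expository caveat: since your (i) produces a large Lipschitz constant, the phrase \emph{``combined with (i)''} when deriving the contraction is misleading; the contraction must come entirely from the cancellation encoded in the Hölder remainder bound for $R$, together with the $O(|r-r_0|)$ size of the translation-difference term $b(\cdot)\bigl[(z_1-w_1)_{r_0}-(z_1-w_1)_r\bigr]$ (controlled through the $C^1_\delta$ norm of $z-w$, as you correctly note) and the $O(|c-c_0|)$ size of $(c-c_0)(z_2-w_2)$ --- not from the Lipschitz constant you established in (i). Apart from that wording, the estimates you sketch are sound, including the observation that $G(h_0,c_0,0)=0$ and $D_zG(h_0,c_0,0)=0$ is what makes the scheme work, and the careful tracking of the translation operator and its difference on the asymmetric weighted space $C_\delta$.
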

\begin{proof} $(i)$ Set $R(s):= (\phi+s w_1+(1-s)z_1,\phi_r+sw_{1r}+(1-s)z_{1r})$,  where  $z=(z_1,z_2),\ w=(w_1,w_2) \in C_{\delta}$. Then  there exists $s_0 \in (0,1)$ such that
\[
 |G(h,c,z)(t)-G(h,c,w)(t)|\leq |c-c_0||z_2-w_2| +
|f_1(R(s_0))-a(t)||w_1-z_1|+\]
\[
|f_2(R(s_0))-b(t)||w_{1r}-z_{1r}|= |c-c_0||z_2-w_2| +\] \[
|f_1(R(s_0))-f_1(\phi,\phi_{r_0})||w_1-z_1|+|f_2(R(s_0))-f_2(\phi,\phi_{r_0})||w_{1r}-z_{1r}|.
\]
Now, since $f_j(x,y), j =1,2,$ are continuous functions of real variables and $\phi(t)$
is bounded on $\R$, for each given $\sigma >0$ there exists $\rho_0>0$
such that $\sup_{t \in \R, s \in
[0,1]}|R(s)(t)-(\phi,\phi_{r_0})(t)| \leq \rho_0$ implies that
$|f_j(R(s))-f_j(\phi,\phi_{r_0})|< \sigma$. Since
\[|R(s)(t)-(\phi,\phi_{r_0})(t)| \leq  |\phi(t-r)-\phi(t-r_0)| +
|w_1(t)|+|z_1(t)|+|w_{1}(t-r)|+|z_{1}(t-r)|\leq \] \[ \sup_{s \in
\R}\phi'(s)|r-r_0| +2\sup_{s\in \R} |w_1(s)|+2\sup_{s\in \R}
|z_1(s)|\leq  |\phi'|_\delta |r-r_0| +4 \rho_1 < \rho_0\] for
sufficiently small $\rho_1,\rho_2$, we find that
\[
 |G(h,c,z)(t)-G(h,c,w)(t)|\leq
\sigma(|w(t)-z(t)|+|w_{1}(t-r)-z_{1}(t-r)|).
\]
Therefore, for all $z, w \in \mathcal{U}_{\rho_1}(0)$ and
$(h,c) \in \mathcal{U}_{\rho_2}(h_0,c_0)$, it holds that 
\[|G(h,c,z)-G(h,c,w)|_\delta \leq
\sigma(|w-z|_\delta+|w_{1}(\cdot -r)-z_{1}(\cdot-r)|_\delta)\leq 2\sigma\Theta
|w-z|_\delta.\] Here we use the continuity of the usual translation
operator $T_r:C_\delta \to C_\delta,  \ r =ch >0,$ defined by $T_rz(s)= z(s-r)$:
$\|T_r\| \leq \exp(-r\lambda_2(c_0))\leq
\exp(-(h_0+\rho_2)(c_0+\rho_2)\lambda_2(c_0))=:\Theta$.

$(ii)$ Take $\sigma< (2\Theta\|L^{-1}\|)^{-1}$ and observe that
$\lim_{r\to 0}|T_r\phi - \phi|_\delta=0$:
\[
|T_r\phi - \phi|_\delta \leq \sup_{s\leq
0}e^{-(\lambda-\delta)s}|\phi(s)-\phi(s-r)| + \sup_{s\geq
0}e^{-(\lambda_2+\delta)s}|\phi(s)-\phi(s-r)| \leq
\]
\[
r\left(\sup_{s\leq 0}e^{-(\lambda-\delta)s}\phi'(\theta(s))+
\sup_{s\geq 0}e^{-(\lambda_2+\delta)s}\phi'(\omega(s)) \right)
= O(r).
\]
Next, if $z\in W_{\rho_1}$ and  $(h,c) \in
\mathcal{U}_{\rho_2}(h_0,c_0)$, then
\[
|G(h,c,z)|_{\delta}=|G(h,c,z)-G(h,c,0)|_{\delta}+|G(h,c,0)|_{\delta}<
2\sigma \Theta\rho_1+|c-c_0||\phi'|_{\delta}+
\]
\[|f(\phi,\phi_{r_0})-f(\phi,\phi_r)|_{\delta}< 2\sigma
\Theta\rho_1+|c-c_0||\phi'|_{\delta}+{\displaystyle\max_{[0,\kappa]\times
[0,\kappa]}}|f_2(x,y)| |\phi_{r_0}-\phi_r|_{\delta} <
\frac{\rho_1}{\|L^{-1}\|},
\]
once $\rho_1,\rho_2, \sigma$ are sufficiently small. Therefore,
for the same $c, h, z,$ we have \[|L^{-1}G(h,c,z)|_{\delta,1}\leq
\|L^{-1}\||G(h,c,z)|_{\delta}< \rho_1,\] so that
$L^{-1}G(h,c,\cdot):W_{\rho_1} \to W_{\rho_1}$ is well defined.
Finally, for $h,c, z$ as above,
\[|L^{-1}G(h,c,z)-L^{-1}G(h,c,w)|_{\delta,1}\leq \|L^{-1}\|
|G(h,c,z)-G(h,c,w)|_{\delta} \leq 
2\sigma\Theta\|L^{-1}\||z-w|_{\delta}\] which completes the proof of the
lemma.
\end{proof}
\begin{corolary}\label{vt}Assume that  either hypothesis $({\bf MG})$ or  $({\bf KPP})$ holds. If $\phi(h_0,c_0)(t)$  is a monotone wavefront of  (\ref{e1}) for some
$(h_0,c_0) \in \Int\,\mathcal{D}_{\frak L}$  then  there exist $\rho>0$
and continuous map $\phi: \R^2_+\cap \mathcal{U}_{\rho}(h_0,c_0)
\to C^1_\delta$ such that each $\phi(h,c)(t)$ is a travelling front
of equation (\ref{e1}) considered with $(h,c) \in \R^2_+\cap
\mathcal{U}_{\rho}(h_0,c_0)$.
\end{corolary}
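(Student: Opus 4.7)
The plan is to read the corollary off from Lemma \ref{lip} by invoking the uniform contraction mapping theorem with parameter $(h,c)\in\R^2_+\cap\mathcal{U}_{\rho_2}(h_0,c_0)$. Item (ii) of that lemma guarantees a contraction constant $q<1$ that is independent of the parameter, so for each admissible $(h,c)$ the fixed point equation $z=L^{-1}G(h,c,z)$ has a unique solution $z(h,c)\in W_{\rho_1}$; equivalently, $Lz(h,c)=G(h,c,z(h,c))$, and hence $\frak{F}_{c_0}(z(h,c))=G(h,c,z(h,c))$.

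Next, I would establish continuity of the map $(h,c)\mapsto z(h,c)$ by the standard parametric estimate: if $(h_n,c_n)\to(h,c)$ and $z_n:=z(h_n,c_n)$, then
\[
|z_n-z(h,c)|_{1,\delta}\leq q\,|z_n-z(h,c)|_{1,\delta}+\|L^{-1}\|\,|G(h_n,c_n,z(h,c))-G(h,c,z(h,c))|_\delta,
\]
so it suffices to check that $G(\cdot,\cdot,z)$ depends continuously on $(h,c)$ for fixed $z$ in the norm $|\cdot|_\delta$. This reduces to the continuous dependence on $r=ch$ of the translation operator $T_r\colon C_\delta\to C_\delta$ and of $T_r\phi(h_0,c_0)$, both of which were already handled inside the proof of Lemma \ref{lip} (the bounds $\|T_r\|\leq\Theta$ and $|T_r\phi-\phi|_\delta=O(|r-r_0|)$).

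Finally, I would verify that $\phi(h,c)(t):=\phi(h_0,c_0)(t)+z_1(h,c)(t)$ is a travelling front of (\ref{e1}) at parameters $(h,c)$. By the Lyapunov--Schmidt reformulation of Subsection~4.1, the identity $\frak{F}_{c_0}(z)=G(h,c,z)$ is equivalent to $(\phi(h_0,c_0)+z_1,\phi'(h_0,c_0)+z_2)$ solving system (\ref{SEO}) at the parameters $(h,c)$, so the first component automatically satisfies (\ref{e1}). The boundary conditions $\phi(h,c)(-\infty)=0$ and $\phi(h,c)(+\infty)=\kappa$ are inherited from $\phi(h_0,c_0)$ because $z(h,c)\in C^1_\delta$ decays exponentially at both ends, with rates $\lambda(c_0)-\delta>0$ at $-\infty$ and $-\lambda_2(c_0)-\delta>0$ at $+\infty$.

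The main technical hurdle, namely the uniform Lipschitz and smallness estimates in the weighted norm together with the control of the shifting parameter $r=ch$, is already absorbed into Lemma \ref{lip}; given that lemma, the corollary is a routine packaging of the parametric Banach fixed-point theorem. Note that monotonicity of the perturbed front is not asserted here and would require a separate argument, to be carried out in the next subsections of the global continuation procedure.
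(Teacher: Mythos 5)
Your proposal matches the paper's proof: both invoke Lemma~\ref{lip}(ii) to apply the uniform contraction mapping principle, obtain a continuously parameter-dependent fixed point $z(h,c)$ (the paper simply cites Henry's book for this step, which you spell out explicitly), and then identify $\phi(h_0,c_0)+z_1(h,c)$ as a travelling front at $(h,c)$. The extra remark on the boundary conditions being inherited from the $C^1_\delta$ decay is a harmless addition and does not change the argument.
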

\begin{proof}
Indeed, since $L^{-1}G(h,c,\cdot):W_{\rho_1} \to W_{\rho_1}$  is a
uniform  contraction, there exist a unique solution $z=z(h,c)$  of
the equation $L^{-1}G(h,c,z)=z$.  Moreover, the function $z:
\mathcal{U}_{\rho_2}(h_0,c_0) \to W_{\rho_1}$  depends
continuously on $(h,c)$ (e.g. see \cite[Section 1.2.6]{Hen}) and
$z(h_0,c_0)=0$. As a consequence,
$\frak{F}_{c_0}(z(h,c))=G(h,c,z(h,c))$ and therefore
$\phi(h,c)(t):= \phi(t) + z_1(h,c)(t)$ is a travelling front of
equation (\ref{e1}) considered with $(h,c) \in \R^2_+\cap
\mathcal{U}_{\rho}(h_0,c_0)$.
\end{proof}
\subsection{Asymptotic analysis of $\phi(t,h,c):= \phi(h,c)(t)$.}
Fix  $(h_0,c_0) \in \mathcal{D}_{\frak N}$ and suppose
that there exists a monotone wavefront for equation (\ref{e1})
considered with $h=h_0$ and propagating with the velocity $c=c_0$. As we have
proved, this implies the existence of an open neighborhood
$\mathcal O \subset \R^2_+$ of  $(h_0,c_0)$  and a continuous
family $\phi: {\mathcal O}  \to C_\delta^1$ of wavefronts to
(\ref{pe}). It should be observed that, at the present moment, we do not have any information either about the positivity or about the monotonicity properties of $\phi(h,c)$. In the next lemma, we analyze the main term of
asymptotic expansions of  each particular wavefront $\phi(h,c)$
at  the infinity.  Recall that  $f\in C^{1,\gamma}$ for  some
$\gamma\in (0,1]$.  Since $\delta$ can be taken arbitrarily small,
there is no loss of generality in assuming that $\gamma, \delta$
satisfy $(\gamma+1)(\lambda(c_0)-\delta) > \lambda(c) + 2\sigma> \lambda(c_0) - \delta$
for some $\sigma >0$ and all $(h,c) \in \mathcal O$.
\begin{lemma}\label{afl} Let $(h_0,c_0) \in \mathcal{D}_{\frak N}$. Then there exist an open  neighborhood ${\mathcal O'} \subset \mathcal O$ and  continuous functions
$K_1,K_2:  \mathcal O' \to (0,+\infty)$ such that, for some
$\sigma >0, M >0,$ independent of $c,h$, and for all $(h,c) \in
\mathcal O'$, it holds that 
\[\hspace{-7mm}(\phi (t,h,c),\phi' (t,h,c))=
\left\{
\begin{array}{cc}
K_1(h,c) e^{\lambda(c)
t}(1, \lambda(c)) + R_1(t,h,c),  &  t \leq 0,  \\
(\kappa,0) - K_2(h,c)e^{\lambda_2(c) t}(1, \lambda_2(c))
+R_2(t,h,c),   & t\geq 0,
\end{array}
\right.
\]
where $ |R_1(t,h,c)| \leq M e^{(\lambda(c)+\sigma) t}, \ t \leq
0,\quad |R_2(t,h,c)| \leq M e^{(\lambda_2(c)- \sigma) t}, \ t \geq
0. $
\end{lemma}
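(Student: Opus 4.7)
The plan is to derive these expansions by linearizing equation (\ref{e1}) at the equilibria $0$ and $\kappa$ along the wavefront $\phi(\cdot,h,c)$ and invoking the spectral information from Lemmas \ref{lc1} and \ref{lc2}. I focus on the asymptotics at $-\infty$; the case at $+\infty$ follows by the same reasoning applied to $\tilde\phi:=\kappa-\phi$, with $\alpha_\kappa,\beta_\kappa$ in place of $\alpha_0,\beta_0$ and Lemma \ref{lc2} replacing Lemma \ref{lc1}. By Corollary \ref{vt}, after shrinking $\mathcal O$ to $\mathcal O'\subset\mathcal O$, the difference $\phi(\cdot,h,c)-\phi(\cdot,h_0,c_0)$ depends continuously on $(h,c)$ in $C_\delta^1$, which furnishes a uniform bound $|\phi(t,h,c)|+|\phi'(t,h,c)|\le Ne^{(\lambda(c_0)-\delta)t}$ for all $t\le 0$ and $(h,c)\in\mathcal O'$.

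Setting $g_1(t,h,c):=\alpha_0\phi(t,h,c)+\beta_0\phi(t-ch,h,c)-f(\phi(t,h,c),\phi(t-ch,h,c))$ and using $f\in C^{1,\gamma}$ with $f(0,0)=0$, a standard H\"older estimate on $\nabla f$ at $(0,0)$ yields
\[|g_1(t,h,c)|\le M_1\bigl(|\phi(t,h,c)|+|\phi(t-ch,h,c)|\bigr)^{1+\gamma}\le M_2\,e^{(1+\gamma)(\lambda(c_0)-\delta)t},\quad t\le 0,\]
uniformly on $\mathcal O'$. By the standing inequality $(1+\gamma)(\lambda(c_0)-\delta)>\lambda(c)+2\sigma$ valid throughout $\mathcal O$, the forcing $g_1$ decays strictly faster than $e^{(\lambda(c)+2\sigma)t}$. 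Thus $\phi(\cdot,h,c)$ solves on $\R_-$ the non-homogeneous linear equation $v''(t)-cv'(t)+\alpha_0v(t)+\beta_0v(t-ch)=g_1(t,h,c)$, and I then invoke asymptotic integration for such equations via the Hale--Lin shifted exponential dichotomy on $(-\infty,0]$ (exactly as in Proposition \ref{HaLi} and Lemma \ref{minf}). By Lemma \ref{lc1}, in the strip $\{\lambda(c_0)-\delta\le\Re z\le\lambda(c)+\sigma\}$ and for $\delta,\sigma$ small enough, the only characteristic root of the autonomous linearization at $(h,c)$ is the simple real $\lambda(c)$: the root $\mu(c)$ lies to the right and every complex root has real part strictly below $\lambda(c_0)-\delta$. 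Projecting $\phi(\cdot,h,c)$ onto the one-dimensional spectral subspace associated with $\lambda(c)$ produces scalar $K_1(h,c)$ and remainder $R_1$ satisfying
\[\phi(t,h,c)=K_1(h,c)e^{\lambda(c)t}+R_1(t,h,c),\quad |R_1(t,h,c)|\le Me^{(\lambda(c)+\sigma)t},\quad t\le 0,\]
together with the analogous estimate for $\phi'$ obtained by differentiating the variation-of-constants representation.

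The formula $K_1(h,c)=\lim_{t\to-\infty}e^{-\lambda(c)t}\phi(t,h,c)$ exhibits $K_1$ as a bounded linear functional of $\phi(\cdot,h,c)\in C_\delta^1$ composed with the continuous map $(h,c)\mapsto\phi(\cdot,h,c)$, so continuity of $K_1$ on $\mathcal O'$ follows from Corollary \ref{vt} together with the continuous dependence of $\lambda(c)$ on $c$. Positivity $K_1(h_0,c_0)>0$ is a consequence of $(h_0,c_0)\in\mathcal D_{\frak N}$: by Definition \ref{lpm}, $\Lambda_-(\phi(\cdot,h_0,c_0))=\lambda(c_0)$, which rules out the strictly faster decay of $\phi$ that $K_1(h_0,c_0)=0$ would force. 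After shrinking $\mathcal O'$ once more, $K_1>0$ throughout, and the construction of $K_2$ at $+\infty$ is identical using Lemma \ref{lc2} and the matching identity $\Lambda_+(\phi(\cdot,h_0,c_0))=\lambda_2(c_0)$.

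The main obstacle I anticipate is making the spectral decomposition uniform in $(h,c)$ while the weighted norm of $C_\delta$ (anchored at $c_0$) and the principal exponents $\lambda(c),\lambda_2(c)$ vary with $c$. This is handled by noting that the characteristic roots of the autonomous limit equations are simple and uniformly isolated on the closure of a sufficiently small $\mathcal O'$ (Lemmas \ref{lc1} and \ref{lc2}), so the spectral projections and Green's function bounds depend continuously on $c$; the uniform remainder constant $M$ and the continuity of $K_1,K_2$ then follow from a routine compactness argument.
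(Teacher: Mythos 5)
Your overall strategy matches the paper's: reduce to the non-homogeneous linear equation with a H\"older-controlled forcing term, use the spectral structure of the autonomous linearization to isolate the leading eigenmode, and appeal to $(h_0,c_0)\in\mathcal D_{\frak N}$ plus positivity of $\phi$ to get $K_1(h_0,c_0)>0$, then shrink the neighborhood. That part is sound.

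There is, however, a genuine gap in your continuity argument for $K_1$. You define $K_1(h,c)=\lim_{t\to-\infty}e^{-\lambda(c)t}\phi(t,h,c)$ and claim continuity because this ``exhibits $K_1$ as a bounded linear functional of $\phi(\cdot,h,c)\in C_\delta^1$.'' This is false: the functional $\psi\mapsto\lim_{t\to-\infty}e^{-\lambda(c)t}\psi(t)$ is not bounded on $C_\delta^1$, because the weight in the norm is $e^{-(\lambda(c_0)-\delta)t}$ and for $(h,c)\in\mathcal O'$ one has $\lambda(c)>\lambda(c_0)-\delta$; so $e^{-\lambda(c)t}\psi(t)$ can blow up for $\psi$ of unit $|\cdot|_{1,\delta}$-norm, and the limit need not exist at all for a generic element of the space. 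Continuity of $(h,c)\mapsto\phi(\cdot,h,c)$ in $C_\delta^1$ therefore gives you no control on $K_1$ by this route. The paper sidesteps this by computing $K_1$ not as a limit extracted from $\phi$, but as the residue at $\lambda(c)$ of the two-sided Laplace transform of the forcing $F(\cdot,h,c)$:
\[
K_1(h,c)=-\frac{1}{\chi_0'(\lambda(c))}\int_\R e^{-\lambda(c)s}F(s,h,c)\,ds,
\]
which converges because $e^{-\lambda(c)s}|F(s,h,c)|\le C_2e^{2\sigma s}$ for $s\le0$ (a gain you have already established for $g_1=F$ but then do not exploit), and continuity in $(h,c)$ is read off directly from this integral together with $\chi_0'(\lambda(c))\neq0$. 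Your remaining steps (spectral isolation of $\lambda(c)$ in a strip, uniform remainder bounds, positivity, and the analogous work at $+\infty$ via $\kappa-\phi$) are in line with the paper, though the ``routine compactness argument'' for the uniform constant $M$ deserves the explicit Green's-function/residue bookkeeping that the paper carries out.
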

\begin{proof} First, we will analyze the asymptotic behavior at $-\infty$. By Corollary \ref{vt},   there exist a positive number $M_1>0$ and an open  neighborhood ${\mathcal O_1} \subset \mathcal O$ such that, for all  $t \leq 0, \ (h,c) \in {\mathcal O_1}$,
\[|\phi(t,h,c)| = |z(t,h,c)+\phi(t)| \leq (|z(h,c,\cdot)|_\delta +
|\phi|_\delta)e^{(\lambda(c_0)-\delta)t}\leq
M_1e^{(\lambda(c_0)-\delta)t}.\] Since
\begin{equation}\label{apf}
\phi''(t,h,c)-c\phi'(t,h,c)+\alpha_0
\phi(t,h,c)+\beta_0\phi(t-ch,h,c)=F(t,h,c),
\end{equation}
where $F(t,h,c):=\alpha_0
\phi(t,h,c)+\beta_0\phi(t-ch,h,c)-f(\phi(t,h,c),\phi(t-ch,h,c))$
satisfies \[ |F(t,h,c)|
 \leq 
|\alpha_0-f_1(\theta(t)\phi(t),\theta(t)\phi(t-ch)||y(t)|+|\beta_0-f_2(\theta(t)\phi(t),\theta(t)\phi(t-ch))||\phi(t-ch)|
\]
\[ \leq C_1(|\theta(t)\phi(t)|+|\theta(t)\phi(t-ch)|)^{\gamma}(|\phi(t)|+|\phi(t-ch)|)\leq
C_2e^{(\gamma+1)(\lambda(c_0)-\delta)t},\] with $C_j$ independent
of $(h,c)\in \mathcal{O}_1$ and $\theta(t)\in (0,1)$ appearing due
to an application of  the mean value theorem. Thus $|F(t,h,c)|
\leq C_2e^{(\lambda(c)+2\sigma)t}, \ t \leq 0,$ so that, by
\cite[Lemma 28]{GT}, $ \phi(t,h,c)=w_{-}(t)+u_-(t)$,
where
\[w_{-}(t)=-\mbox{Res}_{z=\lambda(c)}\left(\frac{e^{zt}}{\chi_0(z)}\int_{\R}e^{-zs}F(s,h,c)ds\right)=-e^{\lambda(c)t}\frac{\tilde
F(\lambda(c),h,c)}{\chi_0'(\lambda(c))},\]
\[
u_-(t) =\frac{
e^{(\lambda(c)+\sigma)t}}{2\pi}\int_{\R}e^{ist}\frac{\tilde{F}(\lambda(c)+\sigma
+is,h,c)}{\chi_0(\lambda(c)+\sigma+is,h,c)}ds, \ \tilde F(z,h,c):
= \int_{\R}e^{-zs}F(s,h,c)ds,
\]
whenever $\delta,\sigma$ are sufficiently small positive numbers.
Set
\[
K_1(h,c):=-\frac{\int_{\R}e^{-\lambda(c)s}F(s,h,c)ds}{\chi_0'(\lambda(c))}.
\]
Since continuous  $F(t,h,c)$ is uniformly bounded on $\R\times
\mathcal{O}_1$ and, in addition, $e^{-\lambda(c)s}|F(s,h,c)|\leq
C_2e^{2\sigma s}, \ s \leq 0,$ we conclude that
 $K_1(h,c)$ is also continuous on $\mathcal{O}_1$. We note that
$\chi_0'(\lambda(c))<0$ for all $(h,c)\in \mathcal{O}_1 \subset 
\Int\, \mathcal{D}_{\frak L}$. Next,  there exists
an open subset  $\mathcal{O}_2\subset\mathcal{O}_1$ such that, for $(h,c)\in \mathcal{O}_2$,
\[
|u_-(t,h,c)|\leq \frac{e^{(\lambda(c)+\sigma)t}}{2\pi}\
\int_{\R}\frac{1}{|\chi_0(\lambda(c)+\sigma+is,h,c)|}\int_{\R}e^{-t(\lambda(c)+\sigma)}|F(t,h,c)|dtds
\leq e^{(\lambda(c)+\sigma)t} C_3, \] where $C_3$ is independent
of $(h,c)$. Indeed, as we have seen, the function
$\int_{\R}e^{-t(\lambda(c)+\sigma)}|F(t,h,c)|dt$ is uniformly
bounded on $\mathcal{O}_1$ and, on the other hand,  for some open subset  $\mathcal{O}_2\subset\mathcal{O}_1$ and positive
$C_4,C_5,$ it holds  that $C_4+ C_5s^2 \leq
|\chi_0(\lambda(c)+\sigma+is,h,c)|, \ s \in \R,\ (h,c)\in
 \mathcal{O}_2$.

In consequence, $K_1(h_0,c_0)\not=0$, since otherwise
$\Lambda_-(\phi) \geq \lambda(c_0)+\sigma > \lambda(c_0)$ (recall
that $(h_0,c_0) \in \mathcal{D}_{\frak N}$ and see Definition
\ref{lpm}).  Moreover, the positivity of $\phi$ implies that
$K_1(h_0,c_0)>0$.  Since $K_1(h,c)$ is continuous,  there exists an
open set $\mathcal{O}_3\subset\mathcal{O}_2$ where $K_1(h,c)$ is
positive.

Next, after integrating equation (\ref{apf}) on $(-\infty,t)$, we
obtain
\begin{eqnarray*}
\hspace{-10mm}\phi'(t,h,c)=c\phi(t,h,c) +
\int_{-\infty}^t\left(F(s,h,c)-\alpha_0
\phi(s,h,c)-\beta_0\phi(s-ch,h,c)\right)ds=\\
K_1(h,c)\lambda(c)e^{\lambda(c)t}+cu_-(t)
+\int_{-\infty}^t\left(F(s,h,c)-\alpha_0
u_-(s)-\beta_0u_-(s-ch)\right)ds,
\end{eqnarray*}
that proves the asymptotic formula of Lemma \ref{afl} at
$-\infty$.

After applying the change of variables
$y(t,h,c)=\kappa-\phi(t,h,c)$, the study of the asymptotic
behavior of wavefronts at $+\infty$ becomes fully analogous
to the first case and is left to the reader.
\end{proof}

\vspace{-4mm}

\subsection{The final part of the proof of Theorem \ref{ngz}.}
The proof of our main result is an easy consequence of the
following three propositions.

\vspace{-4mm}

\begin{lemma}\label{vt3} Assume that  either hypotheses $({\bf M})\& ({\bf MG})$ or  $({\bf M})\&({\bf KPP})$ are satisfied and $(h_0,c_0) \in \mathcal{D}_{\frak N}$. Then  in Corollary \ref{vt},  we can choose $\rho>0$ such that $\phi(h,c)(t)$ is a positive monotone wavefront of
(\ref{e1}) for each  $(h,c) \in \R^2_+\cap
\mathcal{U}_{\rho}(h_0,c_0)$. Hence,  the non-empty set
\[\mathcal{D}'_{{\frak N}}:= \{(h,c) \in \mathcal{D}_{{\frak N}}:
\mbox{there is at least one monotone wavefront for  (\ref{e1})}
\}\] is open in topology of $\mathcal{D}_{{\frak N}}$.
\end{lemma}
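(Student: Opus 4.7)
The plan is to combine the continuous dependence of the family $\phi(\cdot,h,c)$ from Corollary \ref{vt} with the sharp asymptotic expansions of Lemma \ref{afl}. I would split $\R$ into a compact middle $[-T,T]$ and two tails: on the tails the sign of $\phi'(\cdot,h,c)$ is controlled by the leading exponential term, while on the compact part uniform $C^1$-convergence to $\phi(\cdot,h_0,c_0)$ transfers the strict positivity of its derivative to nearby profiles.

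For the tails, Lemma \ref{afl} gives, for $t\leq 0$,
\[
\phi(t,h,c)=K_1(h,c)e^{\lambda(c)t}+R_1(t,h,c),\quad \phi'(t,h,c)=K_1(h,c)\lambda(c)e^{\lambda(c)t}+\tilde R_1(t,h,c),
\]
and the symmetric expansion at $+\infty$ involving $K_2(h,c)$ and $\lambda_2(c)$. Lemma \ref{afl} itself already records $K_1(h_0,c_0)>0$ (and analogously $K_2(h_0,c_0)>0$), using crucially the defining property $\Lambda_\pm(\phi(\cdot,h_0,c_0))=\lambda(c_0),\lambda_2(c_0)$ of membership in $\mathcal D_{\frak N}$ together with the positivity of the original wavefront and the upper bound $\phi<\kappa$. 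By continuity of $K_j(h,c)$, both coefficients remain strictly positive on a neighborhood $\mathcal U_1$; the uniform remainder estimates then allow me to pick a single $T>0$ so that $\phi(t,h,c)\in(0,\kappa)$ and $\phi'(t,h,c)>0$ on $(-\infty,-T]\cup[T,+\infty)$ for every $(h,c)\in\mathcal U_1$.

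On $[-T,T]$, the continuity of $(h,c)\mapsto \phi(\cdot,h,c)\in C^1_\delta$ from Corollary \ref{vt} and the boundedness of the exponential weights on this interval yield uniform $C^1$-convergence of $\phi(\cdot,h,c)$ to $\phi(\cdot,h_0,c_0)$. Hypothesis $({\bf M})$ gives monotonicity of $\phi(\cdot,h_0,c_0)$, and the sign structure of $f$ in $({\bf MG})$ or $({\bf KPP})$ together with equation (\ref{e1}) upgrades this to strict monotonicity: an interior zero $t_0$ of $\phi'$ would give $\phi''(t_0)=-f(\phi(t_0),\phi(t_0-c_0h_0))$, and ruling out $f=0$ here via the sign conditions forces a strict local maximum of $\phi$, incompatible with the boundary values (as in the proofs of Theorems \ref{KPPT} and \ref{MGth}). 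Hence $\phi'(\cdot,h_0,c_0)$ is bounded below by a positive constant $m$ on $[-T,T]$, and after shrinking $\mathcal U_1$ if necessary, $\phi'(t,h,c)\geq m/2$ on $[-T,T]$ for all nearby $(h,c)$. Combined with the tail estimates this gives $\phi'(t,h,c)>0$ on all of $\R$, and the tail expansions provide the boundary values $0$ and $\kappa$; thus $\phi(\cdot,h,c)$ is a positive, strictly monotone wavefront and $\mathcal D'_{\frak N}$ is open.

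Non-emptiness of $\mathcal D'_{\frak N}$ is immediate: by Definition \ref{lpm}, $\mathcal D_{\frak N}$ meets the axis $h=0$, and at $h=0$ equation (\ref{e1}) is an ODE admitting monotone wavefronts for every $c\geq c^{\frak N}_*(0)$ by the classical results recalled in Section \ref{intro}. The main obstacle is the tail step: the strict positivity of the leading coefficients $K_j(h_0,c_0)$ depends essentially on identifying the characteristic exponents of the original wavefront with $\lambda(c_0)$ and $\lambda_2(c_0)$, which is precisely the defining property of $\mathcal D_{\frak N}$ and the reason the whole lemma must be stated with that restriction on $(h_0,c_0)$.
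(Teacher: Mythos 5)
Your tail argument and the use of Lemma \ref{afl} with $K_j(h,c)>0$ continuous match the paper's strategy. However, your treatment of the compact middle $[-T,T]$ diverges from the paper and contains a gap. The paper avoids controlling $\phi'$ on $[-T,T]$ altogether: it only shows $\phi(h,c)(t)\in(0,\kappa)$ there (which follows from $C^0$-closeness to $\phi(h_0,c_0)$), combines this with the tail bounds to conclude $\phi(h,c)(\R)\subset(0,\kappa)$, and then invokes hypothesis $({\bf M})$ directly on the \emph{perturbed} profile $\phi(h,c)$ to get monotonicity. Your route instead verifies $\phi'(t,h,c)>0$ pointwise, which forces you to first establish a strict lower bound $\phi'(\cdot,h_0,c_0)\geq m>0$ on $[-T,T]$.

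This is where the gap lies. Your claim that ``ruling out $f=0$ via the sign conditions forces a strict local maximum'' works for $({\bf KPP})$, where indeed $f>0$ on $(0,\kappa)^2$ (this is exactly the argument in the proof of Theorem \ref{KPPT}). But the hypothesis $({\bf MG})$ contains no such positivity of $f$: for Mackey--Glass type nonlinearities $f(u,v)=-\delta u+g(v)$ one can have $f(\phi(t_0),\phi(t_0-ch))=0$ at an interior point with $\phi(t_0),\phi(t_0-ch)\in(0,\kappa)$, so $\phi''(t_0)=0$ and no contradiction arises from equation (\ref{e1}). Indeed, the paper's own verification of $({\bf M})$ in Theorem \ref{MGth} appeals to an external result (\cite[Theorem 1.1]{TTT}) rather than to the sign structure in $({\bf MG})$, and $({\bf M})$ asserts only monotonicity, not strict monotonicity. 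Without the lower bound $m>0$, $C^1$-closeness of $\phi(\cdot,h,c)$ to $\phi(\cdot,h_0,c_0)$ does not preclude $\phi'(\cdot,h,c)$ changing sign near a degenerate critical point of $\phi(\cdot,h_0,c_0)$. The repair is to follow the paper: drop the attempt to control $\phi'$ on $[-T,T]$, deduce $\phi(h,c)(\R)\subset(0,\kappa)$, and apply hypothesis $({\bf M})$ to the perturbed profile itself.
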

\begin{proof} First, we observe that $\{0\}\times (c^{\frak N}_*,+\infty)\subset \mathcal{D}'_{{\frak N}}\not=\emptyset$ because of the existence results and asymptotic formulae (\ref{afe}) presented in the second paragraph of the introduction.  Next, 
by Lemma \ref{afl}, there exist $\rho'>0$ and $T>0$ independent of
$h,c,$ such that  $\phi'(h,c)(t),\ \phi(h,c)(t) >0$ for all $|t|
\geq T, \  (h,c) \in \mathcal{U}_{\rho'}(h_0,c_0)\cap \R^2_+$.  On
the other hand,  due to the continuity of application $\phi:
\R^2_+\cap \mathcal{U}_{\rho'}(h_0,c_0) \to C^1_\delta$, we find
that, for an appropriate $\epsilon >0$ and some $0< \rho < \rho'$,
it holds that 
\[0<\phi(h_0,c_0)(t)-\epsilon<\phi(h,c)(t)<\epsilon+\phi(h_0,c_0)(t)<\kappa,
\   |t|\leq T, \ (h,c) \in \mathcal{U}_{\rho}(h_0,c_0). \] In
consequence, $ \phi(h,c)(t) \in (0,\kappa)$ for all $t \in \R, \
(h,c) \in \R^2_+\cap \mathcal{U}_{\rho}(h_0,c_0)$.  In addition,
by assumption ({\bf M}),   each profile $\phi(h,c)(\cdot): \R \to
(0,\kappa)$  is a monotone function.

Finally, it is clear that  $\Lambda_-(\phi(h,c)) = \lambda(c),
\Lambda_+(\phi(h,c)) = \lambda_2(c)$ for each
 $(h,c) \in \mathcal{U}_{\rho}(h_0,c_0)$. This means that  $\R^2_+\cap \mathcal{U}_{\rho}(h_0,c_0) \subset  \mathcal{D}'_{\frak N}$.  Since  $(h_0,c_0)$ was an arbitrary point from  $\mathcal{D}'_{\frak N}$, we conclude that $\mathcal{D}'_{\frak N}$ is open in $\mathcal{D}_{\frak N}$.
\end{proof}
\begin{lemma}\label{vt4} For each $(h_0,c_0) \in \overline{
\mathcal{D}'_{\frak N}}$, equation  (\ref{e1}) has at least one
positive monotone front. Therefore $\mathcal{D}'_{{\frak N}}$ is
closed  in topology of $\mathcal{D}_{{\frak N}}$ so that
$\mathcal{D}'_{{\frak N}}=\mathcal{D}_{{\frak N}}$.
\end{lemma}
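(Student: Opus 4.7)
The plan is to prove that $\mathcal{D}'_{\frak N}$ is closed in $\mathcal{D}_{\frak N}$ by a compactness/diagonal extraction argument, and then to combine this with Lemma \ref{vt3} (openness and nonemptiness) and the connectedness of $\mathcal{D}_{\frak N}$ (built into its definition) to conclude $\mathcal{D}'_{\frak N} = \mathcal{D}_{\frak N}$. So, fix $(h_0,c_0) \in \overline{\mathcal{D}'_{\frak N}}$ and pick a sequence $(h_n,c_n) \to (h_0,c_0)$ with $(h_n,c_n) \in \mathcal{D}'_{\frak N}$ and associated positive monotone wavefronts $\phi_n$ of (\ref{e1}) at $(h_n,c_n)$. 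Translating, we may assume $\phi_n(0) = \kappa/2$, which will prevent the limit from being constant.

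Next, I would derive uniform $C^2$-bounds on the family $\{\phi_n\}$. Since $\phi_n(\R)\subset[0,\kappa]$, the nonlinearity $f(\phi_n(t),\phi_n(t-c_nh_n))$ is uniformly bounded by $M_f:=\max_{[0,\kappa]^2}|f|$. Because $(h_n,c_n) \in \mathcal{D}_{\frak N}$, Lemma \ref{afl} yields $\phi_n'(\pm\infty)=0$, and integrating (\ref{e1}) viewed as a linear equation in $\phi_n'$ with integrating factor $e^{-c_nt}$ gives
\[
\phi_n'(t) \;=\; e^{c_n t}\!\int_t^{+\infty}\! e^{-c_n s}\,f(\phi_n(s),\phi_n(s-c_nh_n))\,ds,
\]
hence $|\phi_n'(t)|\leq M_f/c_n$. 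Since $c_n\to c_0>0$, this bound is uniform in $n$ and $t$; (\ref{e1}) then gives a uniform bound on $|\phi_n''|$. A standard diagonal Arzelà--Ascoli extraction yields a subsequence (still denoted $\phi_n$) with $\phi_n\to\phi$ in $C^1_{\mathrm{loc}}(\R)$; (\ref{e1}) then forces $C^2_{\mathrm{loc}}$-convergence, and $\phi$ is a $C^2$-solution of (\ref{e1}) at $(h_0,c_0)$, non-decreasing, with $\phi(\R)\subset[0,\kappa]$ and $\phi(0)=\kappa/2$.

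The decisive step is then to identify the boundary values $a:=\phi(-\infty)$ and $b:=\phi(+\infty)$, which exist by monotonicity and satisfy $0\leq a\leq \kappa/2\leq b\leq \kappa$. The uniform bounds on $\phi''$ and the boundedness of $\phi$ force $\phi'(\pm\infty)=0$ by a standard argument: if $\phi'(t_k)\not\to 0$ for some $t_k\to +\infty$, the Lipschitz bound on $\phi'$ coming from $|\phi''|\leq C$ produces intervals of definite length on which $\phi'$ is bounded away from zero, contradicting $\phi\leq\kappa$. Letting $t\to\pm\infty$ in (\ref{e1}) then yields $g(a)=f(a,a)=0=f(b,b)=g(b)$; the monostability of $g$ (only zeros at $0$ and $\kappa$, positive in between) together with $a\leq \kappa/2\leq b$ forces $a=0$ and $b=\kappa$. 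Thus $\phi$ is a monotone wavefront at $(h_0,c_0)$, so $(h_0,c_0)\in \mathcal{D}'_{\frak N}$.

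This proves $\mathcal{D}'_{\frak N}$ is closed in $\mathcal{D}_{\frak N}$. Combining with Lemma \ref{vt3} (openness and the inclusion $\{0\}\times(c^{\frak N}_*,+\infty)\subset\mathcal{D}'_{\frak N}$, which gives nonemptiness via the classical $h=0$ theory), and using that $\mathcal{D}_{\frak N}$ is connected by definition, we obtain $\mathcal{D}'_{\frak N}=\mathcal{D}_{\frak N}$. The main obstacle is the passage to the limit at $\pm\infty$: one must rule out degenerate limits (such as $\phi\equiv\kappa/2$, or $\phi$ converging to an intermediate non-equilibrium value). This is resolved precisely by the normalization $\phi_n(0)=\kappa/2$ together with the monostability structure of $g$, which has no zeros in the open interval $(0,\kappa)$; everything else is soft analysis.
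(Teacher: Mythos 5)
Your proof is correct and follows essentially the same route as the paper: normalize $\phi_n(0)=\kappa/2$, derive uniform $C^2$ bounds, extract a locally uniformly convergent subsequence, pass to the limit in the equation, identify the boundary values via monostability of $g$, and then combine closedness with Lemma \ref{vt3} and the connectedness of $\mathcal{D}_{\frak N}$. The only difference is technical: you obtain the uniform bound on $\phi_n'$ by inverting the first-order equation for $\phi_n'$ with integrating factor $e^{-c_nt}$ (which requires quoting $\phi_n'(+\infty)=0$ first), whereas the paper uses the second-order integral representation with Green's kernel for $z^2-cz-1$, which gives the bound directly and also makes the passage to the limit a fixed-point identity for the integral operator. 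One small omission: the lemma asks for a \emph{positive} monotone front, so after showing $\phi(-\infty)=0$, $\phi(+\infty)=\kappa$ you still need to rule out that the monotone limit attains $0$ or $\kappa$ at a finite time; the paper does this using $\beta_\kappa<0$ (and, in the KPP case, $f(0,y)\equiv 0$) together with equation (\ref{e1}).
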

\begin{proof} Suppose that a sequence of points $(h_n,c_n)\in \mathcal{D}'_{{\frak N}}$ converges to $(h_0,c_0)$.  If we denote by $\phi_n(t)$ some associated sequence of monotone wavefronts normalized by $\phi_n(0)=\kappa/2$,  a direct verification shows that
\begin{equation}\label{psea} \hspace{-7mm}
\phi_n(t) = \frac{1}{z_2-z_1}\left\{\int_{-\infty}^te^{z_1
(t-s)}(\mathcal{H}\phi_n)(s)ds + \int_t^{+\infty}e^{z_2
(t-s)}(\mathcal{H}\phi_n)(s)ds \right\},
\end{equation}
where $(\mathcal{H}\phi)(s)= \phi(s)+ f(\phi(t),\phi(t-ch))$ and
$z_1<0<z_2$ satisfy $z^2 -cz -1 =0$.  It follows from (\ref{psea})
that $0\leq \phi_n'(t)\leq \kappa + \max_{[0,\kappa]^2}|f(x,y)|$.
Thus $\{\phi_n(t)\}$ has a subsequence (by abusing the notation,
we will call it again  $\{\phi_n(t)\}$) converging in the compact
open topology of $C(\R,\R)$.  Let $\phi_0= \lim \phi_n$, passing
to the limit (as $n \to +\infty$) in (\ref{psea}), we find that
$\phi_0(t)$ also satisfies (\ref{psea}). Therefore $\phi_0(t), \
\phi_0(0)=\kappa/2, \ \phi_0(t) \leq \kappa, 0\leq \phi_0'(t)\leq
\kappa + \max_{[0,\kappa]^2}|f(x,y)|,$ is a monotone  positive
solution of   (\ref{e1}). Since $\phi_0(\pm\infty)$ are finite and
$\phi''_0(t)$ is bounded, we obtain that  $\phi_0'(\pm\infty)=0$.
In consequence, taking into account that $\phi_0$ is a bounded
solution of equation (\ref{e1}), we find that
$f(\phi_0(\pm\infty),\phi_0(\pm\infty)) =0$.  In this way,
$\phi_0(-\infty) =0,  \phi_0(+\infty) =\kappa$. Since
$\beta_\kappa <0$,  it follows from (\ref{e1}) that actually $0<
\phi_0(t) < \kappa, \ t \in \R$.  Finally, since
$\mathcal{D}'_{{\frak N}}$ is simultaneously closed and open in connected space $\mathcal{D}_{{\frak N}}$,  we
obtain that $\mathcal{D}'_{{\frak N}}=\mathcal{D}_{{\frak N}}$.
\end{proof}
\begin{lemma}\label{vt4} If $(\bar h,\bar c) \in \R^2_+\setminus \mathcal{D}_{{\frak L}}$, then equation  (\ref{e1}) does not have any positive eventually monotone front.
\end{lemma}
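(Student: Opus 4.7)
Assume, by contradiction, that $\phi$ is a positive eventually monotone front of (\ref{e1}) corresponding to parameters $(\bar h,\bar c)\notin \mathcal{D}_{\frak L}$. Since $\mathcal{D}_{\frak L}=\{(h,c):c^{\frak L}_0(h)\leq c\leq c^{\frak L}_\kappa(h)\}$, two mutually exclusive cases are possible: (A) $\bar c<c^{\frak L}_0(\bar h)$, or (B) $c^{\frak L}_\kappa(\bar h)<\bar c$. I would treat them by asymptotic analysis at $-\infty$ and $+\infty$ respectively, in the spirit of Lemma \ref{lmac}.

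For Case (A), I would linearize (\ref{e1}) along $\phi$ near $-\infty$. Since $\phi(-\infty)=0$ and $f$ is $C^{1,\gamma}$, the profile satisfies
\[
\phi''(t)-\bar c\,\phi'(t)+\alpha_0\phi(t)+\beta_0\phi(t-\bar c\bar h)=F(t),
\]
with $|F(t)|\leq C e^{(1+\gamma)\mu t}$ for $t\leq 0$, where $\mu>0$ is any exponent exceeding the decay rate of $\phi$. By Lemma \ref{lc1}, the case $\bar c<c^{\frak L}_0(\bar h)$ means that $\chi_0$ has no real roots whatsoever, so every characteristic exponent is genuinely complex. I would then invoke the Mallet-Paret representation (already cited in the proof of Lemma \ref{lmac} via \cite[Proposition 7.2]{FA}) applied to the equation obtained from (\ref{e1}) after the change of variable $t\mapsto -t$: either $\phi$ is a small solution at $-\infty$, or $\phi(t)=C e^{x_j t}(\cos(y_j t+\varphi_j)+o(1))$ with $y_j\neq 0$, giving oscillation around $0$ and contradicting positivity. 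The small-solution alternative is ruled out because $\phi(-\infty)=0$ together with $\phi(t)>0$ (and $f(0,0)=0$) precludes $\phi$ from vanishing identically on any interval, so by the argument used in Lemma \ref{lmac} (density of $\{t:\phi(t)\neq 0\}$ and uniqueness of solutions of the delay equation) $\phi$ is not a small solution.

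For Case (B), I would pass to $\psi(t):=\kappa-\phi(t)$, which tends to $0$ at $+\infty$ and is eventually monotone (and positive near $+\infty$). The function $\psi$ satisfies an equation whose linearization at $0$ has characteristic function $\chi_\kappa$. By Lemma \ref{lc2}, the hypothesis $\bar c>c^{\frak L}_\kappa(\bar h)$ means $\chi_\kappa$ has no negative real roots. The same Mallet-Paret representation at $+\infty$ (applied directly, without the $t\mapsto -t$ substitution) yields either a small solution (excluded as above, since $\psi\not\equiv 0$ and $f(\kappa,\kappa)=0$) or $\psi(t)=Ce^{x_j t}(\cos(y_j t+\varphi_j)+o(1))$ with $y_j\neq 0$. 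This oscillatory behaviour near $+\infty$ contradicts the eventual monotonicity of $\phi$, concluding Case (B).

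The main obstacle is justifying rigorously that $\phi$ (respectively $\psi$) is not a small solution of the asymptotic linear equation and that the Mallet-Paret representation applies in its full strength despite the $C^{1,\gamma}$-perturbative terms. Once this is in place, the topological fact ``characteristic equation has no real root of the required sign $\Rightarrow$ the leading exponent is complex $\Rightarrow$ oscillation'' finishes the argument immediately in both cases. In particular, no extra assumption on the global monotonicity of $\phi$ is needed: positivity handles the left tail, while eventual monotonicity (as in Definition, c.f. Corollary \ref{c24}) handles the right tail.
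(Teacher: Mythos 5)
Your overall strategy for Case (B), i.e.\ $\bar c > c^{\frak L}_\kappa(\bar h)$, is the right one and matches the paper: pass to $\psi=\kappa-\phi$ near $+\infty$, reduce to the asymptotic linear equation whose characteristic function $\chi_\kappa$ has no real root in $(-\infty,0)$, and invoke the Mallet-Paret asymptotic representation to force oscillation, contradicting eventual monotonicity. For Case (A), where the paper simply cites a known non-existence result from \cite{TAT}, your idea of running the same machinery at $-\infty$ is a reasonable alternative route.

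However, the step you label as ``the main obstacle'' is precisely where your proposal fails, and the tool you offer to fill the gap is the wrong one. You propose to rule out the small-solution alternative by observing that $\phi$ (respectively $\psi$) does not vanish on any interval, invoking the density argument from Lemma \ref{lmac}. But a small solution is one that decays faster than \emph{every} exponential, and a strictly positive monotone function can perfectly well do this; density of its support says nothing about the rate of decay. (In Lemma \ref{lmac} density is used for a different purpose --- to infer that a \emph{limiting} solution $w_*$ of the autonomous equation is not eventually zero, so that \cite[Theorem 3.1, p. 76]{Hale} can be applied to that autonomous equation. It is not, and cannot be, an argument that a solution of the perturbed equation is not superexponentially small.) What is actually needed, and what the paper supplies in its Claims I and II, are two quantitative two-sided exponential estimates: first, $\psi(t)=O(e^{lt})$ as $t\to+\infty$ for some $l<0$, obtained from an integral representation after shifting by a constant $\Gamma>0$ chosen so that the right-hand side has a definite sign; second, a lower bound $\psi(t)\geq Ce^{-\rho t}$ proved by an iterative integral comparison that exploits positivity and $\beta_\kappa<0$. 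Only after both bounds are in place does \cite[Proposition 7.2]{FA} give a nontrivial finite sum of eigensolutions as the leading term, and only then does the absence of real eigenvalues in the relevant strip force oscillation. Without Claims I and II your invocation of the Mallet-Paret representation is unjustified, and the argument does not close. The same criticism applies to your Case (A): before the representation can be applied at $-\infty$ one would need analogous two-sided exponential bounds on $\phi$ there, which you have not established.
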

\begin{proof} Take some $(\bar h,\bar c) \in \R^2_+\setminus \mathcal{D}_{{\frak L}}$. Then either  $\bar c< c^{\frak L}_0(\bar h)$ or  $\bar c > c^{\frak L} _\kappa(\bar h)$. In the first case, the non-existence of positive fronts is a well known fact (cf. \cite[Theorem 1]{TAT}).  Consequently, it suffices  to consider the case  $\bar c > c^{\frak L} _\kappa(\bar h)$. Then Lemma \ref{lc2} implies that
$\chi_\kappa(z)$ does not have negative zeros. Arguing by
contradiction, suppose that, nevertheless, equation (\ref{e1}) has some positive
eventually monotone front $\phi(t)$ for  $h=\bar h,c=\bar c$. Then
$\psi(t):= \pm(\kappa-\phi(t))$ is strictly positive on some
interval $[T,+\infty)$ and satisfies
\[
\psi''(t)-\bar c\psi'(t) \pm f(\kappa \pm \psi(t), \kappa \pm
\psi(t-\bar c\bar h))=0,  \ \psi(+\infty)=0,
\]
where the sign "$-$" [respectively, "$+$"] corresponds to the case
$\phi(t) < \kappa, \ t > T$ [to the case  $\phi(t) > \kappa, \ t >
T$,  respectively]. Following the approach in \cite{GT}, we will
show that the inequality  $\bar c > c^{\frak L} _\kappa(\bar h)$
will force $\psi(t)$ to oscillate about the zero.
For the convenience of the reader, the proof is divided in several
steps.

{\it Claim I:  $\psi(t)$ has at least  exponential decay as $t \to
+\infty$.}

First, observe that
\begin{equation}\label{ph}  \psi''(t) - \bar c\psi'(t) = \Gamma \psi(t) - g(t), \
t \in \R,
\end{equation}
where, with some $\mathbf{z}(t):= (\kappa \pm \theta(t)\psi(t),
\kappa \pm \theta(t)\psi(t-\bar c\bar h))$,  $\theta(t) \in
(0,1), \ \Gamma >0,$ we set  \[g(t):= \Gamma \psi(t)\pm f(\kappa \pm \psi(t),
\kappa \pm \psi(t-\bar c\bar h)) =(\Gamma +
f_1(\mathbf{z}(t)))\psi(t) +  f_2(\mathbf{z}(t))\psi(t- \bar c\bar
h).\] Since $f_1(\mathbf{z}(+\infty))  + f_2(\mathbf{z}(+\infty))
=\alpha_\kappa+ \beta_\kappa <0$, $f_2(\mathbf{z}(+\infty)) =
\beta_\kappa <0,$ and $\psi(t)$ is decreasing, we find that, for
all sufficiently large $t$ and some positive $0< \Gamma <
-\beta_\kappa-\alpha_\kappa$, it holds that 
\[
g(t) \leq (\Gamma + f_1(\mathbf{z}(t)) +
f_2(\mathbf{z}(t)))\psi(t)<0.
\]
Since $\psi(t), g(t)$ are bounded on $\R$, we obtain that
\[
\psi(t)= \frac{1}{m-l}\left(\ \int_{-\infty}^te^{l(t-s)}g(s)ds +
\int_t^{+\infty}e^{m(t-s)}g(s)ds\right),
\]
where $l<0$ and $0<m$ are roots of  $z^2 - \bar cz - \Gamma =0$.
The latter representation of $\psi(t)$ implies that there exists
$T_0$ such that
\begin{equation}\label{yyp}
\psi'(t)- l\psi(t) = \int_t^{+\infty}e^{m(t-s)}g(s)ds < 0, \ t
\geq T_0.
\end{equation}
Hence, $(\psi(t)\exp(-lt))' < 0,\  t \geq T_0,$ and therefore
\begin{equation}\label{yg}
\psi(t) \leq \psi(s)e^{l(t-s)}, \quad t\geq s \geq T_0, \quad
g(t)= O(e^{lt}), \ t \to +\infty.
\end{equation}
 Finally, (\ref{yyp}), (\ref{yg}) imply that
$\psi'(t)= O(e^{lt}), \ t \to +\infty$.

{\it Claim II: $\psi(t)>0$ is not
superexponentially small as $t \to +\infty$.}\\
Recall  that $\psi(t)$ is decreasing and positive on  $\R$. Since
the right hand side of Eq. (\ref{ph}) is positive and integrable
on $[T_0, +\infty)$, and since $\psi(t)$ is a bounded solution of
(\ref{ph}) satisfying $\psi(+\infty)=0$, we find that
\[
\psi(t) = -\int_t^{+\infty}(1-e^{\bar c(t-s)})(
f_1(\mathbf{z}(s))\psi(s) +  f_2(\mathbf{z}(s))\psi(s- \bar c\bar
h))ds.
\]
As a consequence, there exists $T_1$ such that \[ \psi(t) \geq
0.5|\beta_\kappa|(1-e^{-0.5\bar h\bar c}) \int_{t-0.5\bar h\bar
c}^{t}\psi(s)ds:= \xi \int_{t-0.5\bar h\bar c}^{t}\psi(s)ds, \quad
t \geq T_1-\bar c \bar h.
\]
Now, since $\psi(t) >0$ for all $t$, we can find positive $C,
\rho$ such that $\psi(s) > C e^{-\rho s}$ for all $s \in [T_1-\bar
c \bar h, T_1]$. We can assume that $\rho$ is large enough to
satisfy the inequality $ \xi(e^{0.5\rho \bar h\bar c}-1)> {\rho}$.
Then we claim that $\psi(s) > C e^{-\rho s}$ for all $s \geq
T_1-\bar c\bar h$. Conversely, suppose that $t'
> T_1$ is the leftmost point where $\psi(t') = C e^{-\rho t'}$. Then we get a contradiction:
\[
\psi(t') \geq \xi \int_{t'-0.5\bar h\bar c}^{t'}\psi(s)ds > C\xi
\int_{t'-0.5\bar h\bar c}^{t'}e^{-\rho s}ds = Ce^{-\rho
t'}\xi\frac{e^{0.5\rho \bar c \bar h}-1}{\rho} > Ce^{-\rho t'}.\]
 {\it Claim III: $\psi(t)> 0$ can not hold when $\chi_\kappa(z)$ does not have
any zero in $(-\infty,0)$}. 

Observe that $\psi(t)$ satisfies
\[\psi ''(t) - \bar c\psi'(t)+ f_1(\mathbf{z}(t))\psi(t) +
f_2(\mathbf{z}(t))\psi(t- \bar c\bar h)=0,\ t \in \R,\
\]
where in virtue of Claim I, it holds that $(\psi(t),\psi'(t))= O
(e^{lt})$. Next, $f \in C^{1,\gamma}$ assures that
$f_1(\mathbf{z}(t)) = \alpha_k + O(\psi^\gamma(t))$,
$f_2(\mathbf{z}(t)) = \beta_k + O(\psi^\gamma(t))$ at $t =
+\infty$.  Then \cite[Proposition 7.2]{FA}  implies that there
exists $q <l$ such that $\psi(t) = v(t) + O(e^{q t}),$ $ t
\to +\infty,$ where $v$ is a {\it non empty} (due to Claim II)
finite sum of eigensolutions of the limiting equation
\[
y''(t) - \bar cy'(t)+ \alpha_ky(t)+ \beta_\kappa y(t-\bar c \bar h)=0,\ t \in
\R,\
\]
associated to the eigenvalues $\lambda_j \in F= \{q < \Re\lambda_j
\leq l\}$. Now, since the set $F$ does not contain any real
eigenvalue by our assumption, we conclude that $\psi(t)$ should be
oscillating on $\mathbb{R}_+$,  a contradiction. \end{proof}
\section{Appendix}
\subsection{Proof of Lemma \ref{lc2}} \label{s5.1}With $\lambda:=cz, \  \epsilon = c^{-2},$ equation (\ref{char1a}) takes the form
\begin{equation}\label{char1b}
F(\lambda):= \epsilon \lambda^2 - \lambda + \alpha_\kappa +\beta_\kappa e^{-h\lambda}
=0,  \ \epsilon >0.
\end{equation}
Since $F'''(x) >0, \ x \in \R$, equation (\ref{char1b}) has at
most three real roots.  Since $F(0) <0, \  F(\pm\infty) =\pm\infty,$ this equation has an even number (either 0 or 2) of negative roots (counting the multiplicity) and
at least one positive root. A straightforward analysis of
(\ref{char1b}) shows  that

(a) If this equation has a negative root for some $\epsilon_0 \geq
0$, it also has two negative roots for each $\epsilon
>\epsilon_0$. We will denote the greatest negative root as
$\lambda_2$.  If $\epsilon_0 =0$, we obtain $c^{\frak L}_\kappa(h)= +\infty$.

(b) If equation (\ref{char1b}) does not have any negative root for
$\epsilon =0$ (this happens when $h$ is sufficiently large), there
exists a unique $\epsilon_0 >0$ such that (\ref{char1b}) possesses
two negative roots (counting the multiplicity)  for $\epsilon \geq
\epsilon_0$ and does not have a negative root for $\epsilon <
\epsilon_0$. Thus $c^{\frak L}_\kappa(h) = \epsilon_0^{-1/2}$ is finite for
sufficiently large $h$ and $\epsilon_0 = \epsilon_0(h)$ can be
determined from the system
\begin{equation}\label{char1c}
\epsilon \lambda^2 - \lambda  +\alpha_\kappa =  -\beta_\kappa e^{-h\lambda}, \ 2\epsilon
\lambda - 1 =  h \beta_\kappa e^{-h\lambda}.
\end{equation}
In particular,  the double negative root $\lambda = \lambda(h)$ of
(\ref{char1b}) satisfies
\begin{equation}\label{con}
-2\frac{\alpha_\kappa}{ \beta_\kappa} + \frac{\omega}{ \beta_\kappa h}= e^{-\omega}(2+\omega), \quad
\omega: = h\lambda(h),
\end{equation}
while $c^{\frak L}_\kappa(h)$ is strictly decreasing on some maximal open interval
$(h_0, +\infty), \ h_0 >0, $ because of
$\epsilon_0'(h) = \beta_\kappa e^{-\lambda h}/\lambda >0.
$
Observe that $\alpha_\kappa/ |\beta_\kappa| <1$ and the right-hand side of (\ref{con}) has a unique inflection 
point at  $\omega =0$. This implies that $\omega(h) \to
\omega_\kappa, \ h \to +\infty,$ where $\omega_\kappa<0$ satisfies
(\ref{roe}).

It is clear that  $c^{\frak L}_\kappa(h)=+\infty$ for $h \in [0, h_0]$.  From the
second equation of (\ref{char1c}), we also easily obtain that
$
\lim_{h \to +\infty} hc^{\frak L}_\kappa(h)=
\sqrt{\frac{2\omega_\kappa}{ \beta_\kappa}}e^{\omega_\kappa/2},
$
so that $c^{\frak L}_\kappa(+\infty)=0$.

(c) It is immediate to see that, for each fixed $c = 1/\sqrt{\epsilon} \in (0,c^{\frak L}_\kappa]$,  there
exists $x_1>0$ (independent on $h$) such that $\Re \lambda_j <
x_1$ for every $\lambda_j$ satisfying (\ref{char1b}). Furthermore,
for every fixed $x_2 \in \R$ there is an increasing continuous
function  $y=y(h) >0,\ h \geq 0,$ such that all roots $\lambda_j$
of (\ref{char1b}) with $\Re \lambda_j \geq x_2$ are contained in
the rectangle ${\mathcal R}(x_2,h):=[x_2,x_1]\times [- y(h), y(h)]
\subset \C$.  Next, observe that because of $\alpha_\kappa +\beta_\kappa <0$ equation
(\ref{char1b}) with $h =0$ has only two roots $\lambda_2 < 0 <
\lambda_1$.  By the Rouche's theorem, this implies that, for all
small positive $h$, equation   (\ref{char1b})   does not have roots
$\lambda_j= \lambda_j(h),\  \Re \lambda_j \geq \lambda_2,$ others
than $\lambda_2(h), \lambda_1(h)$. Now, suppose for a moment that
for some positive $h_0$ there exists complex $\lambda_j(h_0) \in
{\mathcal R}(\lambda_2(h_0),h_0)$. Let  $h_0$ be the minimal value with
such a property, then  the Rouche's theorem assures that
$\Re\lambda_j(h_0) = \lambda_2(h_0)$. Moreover, $\Im\lambda_j(h_0)
\not=0$ since otherwise $\lambda_2(h_0)$ would have the multiplicity 3.
Thus equation (\ref{char1b})  with $h=h_0$ has at least three
roots of the form $\lambda(y) := \lambda_2(h) + i y$ with $y \in
\{-\theta,0, \theta\}$ for some positive $\theta$.    Since  $c
\in (0,c^{\frak L}_\kappa]$, the function  $F(x)$ has exactly two critical
points, one of them belongs to $[\lambda_3(h),\lambda_2(h)]$ and
the second one is in $(\lambda_2(h),\lambda_1(h))$. In
consequence, \[F'(\lambda_2(h)) = 2\epsilon \lambda_2(h) - 1 +h
|\beta_\kappa|e^{-h\lambda_2(h)} \leq 0.\]  However, this contradicts to the
following relations: $F(\lambda(\theta)) = 0= \Im
F(\lambda(\theta))= $ \[\theta \left(2\epsilon  \lambda_2(h) - 1 +
h |\beta_\kappa|e^{-h\lambda_2(h)}\frac{\sin(h\theta)}{h\theta}\right) <
\theta \left(2\epsilon  \lambda_2(h) - 1 + h
|\beta_\kappa|e^{-h\lambda_2(h)}\right) \leq 0.  \hspace{1.8cm} \square \]
\subsection{Proof of Lemma \ref{lc1}} The existence of the critical speed $c^{\frak L}_0(h)$ which  has properties 
mentioned in the lemma is 
a well known fact, and its proof is omitted.  Clearly, it suffices to consider the case $\beta_0 >0$. Next, if  $c > c^{\frak L}_0(h)$ then 
$0< Q_0 := cq_0 -q_0^2 - \alpha_0 < \beta_0 e^{-ch q_0}$ for some  $q_0 = q_0(c) \in (0, \lambda),$ $ \ c - 2q_0 >0$. 
The change of variables $ \omega: = (z-q_0)(c -2q_0)/Q_0 $
transforms  (\ref{char1}) into
\begin{equation}\label{char1om}
\epsilon \omega^2 - \omega -1 + \gamma e^{- \omega h'}=0,
\end{equation}
where \[ \epsilon: = \frac{Q_0}{(c-2q_0)^2} >0, \ \gamma := \beta_0
e^{-ch q_0}/Q_0 >1, \ h' := \frac{chQ_0}{c-2q_0} >0.
\]
Now, since  inequalities (\ref{por1}) for equation
(\ref{char1om}) were established  in \cite[Lemma 2.3]{TT}, we
obtain that inequalities (\ref{por1}) hold also for equation
(\ref{char1}) once $c > c^{\frak L}_0(h)$.

Next, let $z_0 = x_0 + i y_0$ with $\Re z_0 = x_0 <  \lambda$ be a
complex root of (\ref{char1}). Then $ 0> (2x_0-c)|y_0| = \beta_0
e^{-chx_0} \sin (ch|y_0|) $ and therefore $ch|y_0| > \pi$.

Finally, the derivation of asymptotic representation  and the proof of monotonicity of $c^{\frak L}_0(h)$ repeat the 
arguments used in Subsection \ref{s5.1} (b) above and are omitted. \hfill $\square$

\subsection{Proof of Lemma \ref{lc3}}
Suppose that the graphs of the functions $c= c^{\frak L}_0(h)$ and $c= c^{\frak L}_\kappa(h)$ intersects
at some $h=h_1$. Since $\lambda_2(h_1) < 0 < \lambda(h_1)$, after differentiating the first equation of (\ref{char1c}) with respect to $h$, 
we obtain 
\[\frac{d}{dh}c^{\frak L}_\kappa(h)
|_{h=h_1} = -\frac{c^{\frak L}_\kappa(h_1)}{h_1} +
\frac{(c^{\frak L}_\kappa(h))^3}{2h_1\lambda_2(h_1)} <
-\frac{c^{\frak L}_0(h_1)}{h_1} +
\frac{(c^{\frak L}_0(h))^3}{2h_1\lambda(h_1)}=
\frac{d}{dh}c^{\frak L}_0(h)
|_{h=h_1}. \hspace{1.8cm} 
\]
This means that the above mentioned graphs have a unique transversal intersection on $\R_+$. 
As a consequence, if  $\theta(\alpha_\kappa,\beta_\kappa) = \theta_1(\alpha_0,\beta_0)$ then 
$c^{\frak L}_0(h) <  c^{\frak L}_\kappa(h)$ for all $h \geq 0$. \hfill $\square$
 
\subsection{Proof of Lemma \ref{tdo}}
It suffices to prove the inclusion $\Int\,\mathcal{D}_{\frak{L}} \subset \mathcal{D}_{\frak{N}}$ where  $\Int \, \mathcal{D}_{\frak{L}}$ denotes  the interior of $\mathcal{D}_{\frak{L}}$.  So let us fix some
$(h,c)\in \Int\,\mathcal{D}_{\frak{L}}$. By the definition of $\mathcal{D}_{\frak{N}}$,  it holds automatically 
$(h,c)\in \mathcal{D}_{\frak{N}}$ if there does not exist any monotone 
heteroclinic solution to equation (\ref{e1}) for the choosen pair $(h,c)$. Therefore we can assume that 
(\ref{e1}) has a positive monotone front 
$\phi: \R \to (0,\kappa)$.  Set $y(t):=\kappa-\phi(t)$
and $\mathbf{u}(t)= (\kappa - sy(t), \kappa - sy(t-ch)), \ s \in [0,1]$. Then
$y(t)$ satisfies the linear equation
 \begin{equation}\label{kppOaux}
 x''(t)-cx'(t)+(\alpha_\kappa+N(t))x(t)+ (\beta_\kappa+M(t))x(t-ch)=0,
 \end{equation}
 \[ \mbox{where} \hspace{0.5cm} N(t):=\int_0^1f_1(\mathbf{u}(t))ds - \alpha_\kappa, \
M(t):=\int_0^1f_2(\mathbf{u}(t))ds - \beta_\kappa,\
\]
so that $N(+\infty)= M(+\infty) =0$.   Since the linear equation with
constant coefficients
\begin{equation}\label{lab}
 x''(t)-cx'(t)+\alpha_\kappa x(t)+ \beta_\kappa x(t-ch)=0
\end{equation}
is hyperbolic (i.e. it does not have eigenvalues on the imaginary axis) 
and $N(+\infty)= M(+\infty) =0$,   equation  (\ref{kppOaux})  possesses  the
property of  exponential dichotomy 
on some infinite interval $[\tau,+\infty)$ (e.g. see
\cite[Lemma 4.3]{HL}). In particular,
$y(+\infty)=y'(+\infty)=0$ yield  $y(t), y'(t)=O(e^{-\rho t}), \ t \to +\infty,$ for
some $\rho>0$.  Therefore, in view of $C^{1,\gamma}$-smoothness of
$f$,  we have that $M(t),N(t)=O(e^{-\rho\gamma t})$ at  $t=
+\infty$.  Hence, invoking  \cite[Proposition 7.2]{FA} and Lemma
\ref{lc2}, we obtain that $y(t) = ae^{\lambda_2 t} +
o(e^{(\lambda_2-\delta) t}), \ t \to +\infty,$ for some $a$ and $
\delta >0$.  Note that $a\geq 0$ since  we have
$\phi(t) \in (0,\kappa)$ for  all $t \in \R$. 
In fact, $a$ can be  found explicitly (e.g., see
\cite[Lemma 28]{GT}): 
\begin{equation}\label{57}
a=
\mbox{Res}_{z=\lambda_2}\frac{-1}{\chi_\kappa(z)}\int_{\R}e^{-zs}S(s)ds=
\frac{-1}{\chi_\kappa'(\lambda_2)}\int_{\R}e^{-\lambda_2s}S(s)ds
>0,
\end{equation}
since  $$S(t):= N(t)y(t)+
M(t)y(t-ch) =
\alpha_\kappa(\phi(t)-\kappa)+\beta_\kappa(\phi(t-ch)-\kappa)-
f(\phi(t), \phi(t-ch)) \geq 0,$$ is not identically zero. Indeed,  if $S(t) \equiv 0$ then  bounded and strictly
decreasing $y(t)$ must satisfy  (\ref{lab}). However, this is impossible due to the hyperbolicity of this
equation. 
Thus  $\Lambda_+(\phi(t))=\lambda_2$. The proof of the relation
$\Lambda_{-}(\phi(t))=\lambda$ is completely similar and is left
to the reader. \hfill $\square$
\begin{remark}\label{ru} The above argument needs a minor modification to imply the inclusion
$\Int\, {\frak{D}}^-_{\frak{L}}:= \{(h,c): c \in (c_0^{\frak{L}}(h), c^-_\kappa(h)), \ h  \in [0,h^-_0]\} \subset \overline{\frak{D}}_{\frak{N}}$ stated in the proof of  Theorem \ref{TR}.  It suffices to show that $a$ in (\ref{57})
is positive  for each $(h,c) \in \Int\, {\frak{D}}^-_{\frak{L}}$. Assuming,  on the contrary, that $a=0$,  and again  invoking  \cite[Proposition 7.2]{FA} and Lemma
\ref{lc2}, we find that $y(t) = be^{\lambda_1 t} +
o(e^{(\lambda_1-\delta) t}), \ t \to +\infty,$ for some $b \geq 0, \
\delta >0$. Then $y(t)$ satisfies the equation 
$$
 x''(t)-cx'(t)+\alpha_\kappa x(t)+ \beta_\kappa^-x(t-ch)=Q(t),
$$
where $Q(t) := \alpha_\kappa y(t)+ \beta_\kappa^-y(t-ch) - 
f(\phi(t), \phi(t-ch)) \geq 0$. Furthermore, $Q(t)=  -N(t)y(t)+ (\beta_\kappa^--\beta_\kappa-M(t))y(t-ch)=O(e^{\lambda_1 t}), \ t \to +\infty$, and we claim that $Q(t)$  is not identically zero.  Indeed, 
by the proof of Lemma \ref{lc2}, we have that for $c \in (c_0^{\frak{L}}(h), c^-_\kappa(h))$  the characteristic function $\chi_\kappa^-(z):= z^2 - cz + \alpha_\kappa + \beta_\kappa^-
e^{-chz}$ has exactly three real zeros $\lambda_1^- < \lambda_2^- <  0 < \lambda_3^-$ and does not have any zero  on $i\R$. Moreover,  it is easy to see that  
$\lambda_1 \leq  \lambda_1^-< \lambda_2^- \leq \lambda_2 < 0$. Therefore,  if $Q(t) \equiv 0$ then  bounded and strictly decreasing $y(t)$ must satisfy  a hyperbolic equation with constant coefficient, a contradiction. Since 
$y(t) = O(e^{\lambda_2^- t}), \ t \to +\infty$, we also have that $y(t) = ce^{\lambda_2^- t} +
o(e^{(\lambda_2^--\delta_1) t}), \ t \to +\infty,$ for some $c \geq 0, \
\delta_1 >0$, where actually
$$
c=
\mbox{Res}_{z=\lambda_2^-}\frac{-1}{\chi_\kappa^-(z)}\int_{\R}e^{-zs}Q(s)ds=
\frac{-1}{(\chi_\kappa^-)'(\lambda_2^-)}\int_{\R}e^{-\lambda_2s}Q(s)ds
>0.
$$
This contradicts to the assumption $y(t) = O(e^{\lambda_1 t}), \ t \to +\infty,$ and shows that $a>0$. 
\end{remark} 
\begin{remark} \label{e1r} The proof of Lemma \ref{tdo} shows that, for  each $(h,c)\in \Int\,\mathcal{D}_{\frak{L}}$, it holds that $y(t) = O(e^{\lambda_2 t}),$ $ t \to +\infty$, even if the sub-tangency conditions of the lemma are not assumed.  Similarly, $\phi(t) = O(e^{\lambda  t}), \ t \to -\infty$.  In order to establish the same growth estimates
for the derivatives $y'(t),\phi'(t)$, we can proceed as follows.  For example, let us consider $y'(t)$ at $+\infty$.  After 
integrating (\ref{kppOaux}) on $(t,+\infty)$, we obtain
 $$
 y'(t)=cy(t)+\int_t^{+\infty}(\alpha_\kappa+N(s))y(s)+ (\beta_\kappa+M(s))y(s-ch)ds =  O(e^{\lambda_2 t}),\  t \to +\infty.
 $$
\end{remark}
\section*{Acknowledgments}
The authors thank Teresa Faria and  Anatoli Ivanov  for useful discussions:  especially we  would like to acknowledge the support of CONICYT (Chile), project MEC 80110006 which allowed the stay  of Dr.  Ivanov in the University of Talca.  Research was also partially supported by CONICYT through PBCT program ACT-56
and by FONDECYT (Chile), project 1110309. A. Gomez was supported by CONICYT programs "Pasant\'ias doctorales
en el extranjero" and "Becas para estudios de doctorado en Chile".

\affiliationone{
Adrian Gomez and  Sergei Trofimchuk\\
   Instituto de Matem\'atica y Fisica\\
   Universidad de Talca, Casilla
   747, Talca\\
   Chile
   \email{adgomez@utalca.cl\\
   trofimch@inst-mat.utalca.cl}}
\end{document}